\documentclass[sn-mathphys,Numbered]{sn-jnl} 

 \usepackage[T1]{fontenc}
\usepackage{mathtools}
\usepackage[thinc]{esdiff}
\usepackage{wrapfig,lipsum}
\usepackage{anyfontsize}
\usepackage{graphicx} 
\usepackage{multirow} 
\usepackage{amsmath,amssymb,amsfonts} 
\usepackage{amsthm} 
\usepackage{mathrsfs} 
\usepackage[title]{appendix} 
\usepackage{xcolor} 
\usepackage{float}
\usepackage{textcomp} 
\usepackage{manyfoot} 
\usepackage{booktabs} 
\usepackage{algorithm} 
\usepackage{algorithmicx} 
\usepackage{algpseudocode} 
\usepackage{listings}
\usepackage{array}
\usepackage{pdflscape}
\usepackage{caption}
\usepackage{subcaption}
\usepackage[utf8]{inputenc}
\usepackage{booktabs}
\usepackage{amsfonts}
\usepackage{siunitx}
\usepackage{longtable}
\usepackage{eucal}
\usepackage{multirow}
\usepackage{enumerate}
\usepackage{lineno}
\usepackage{framed}

\usepackage{tikz}
\usetikzlibrary{arrows} 
\usepackage{pgfplots} 
\usepackage{hyperref}

\usepackage{placeins}
\usepackage[disable]{todonotes}
\newtheorem{theorem}{Theorem}[section]
\newtheorem*{tpp}{Three Point Property}
\newtheorem{proposition}{Proposition}[section] 
\newtheorem{lemma}{Lemma}[section]

\newtheorem{remark}{Remark}[section] 
\newtheorem{assumption}{Assumption}[section] 
\newtheorem{definition}{Definition}[section]

\begin{document}
\title[Proximal gradient methods for Unconstrained set optimization]{Proximal Gradient Methods for Unconstrained Set Optimization Problems with Set-Valued Maps of Finite Cardinality}

\author[1]{\fnm{Ravi} \sur{Raushan}}\email{raviraushan.rs.mat21@itbhu.ac.in}

\author*[1]{\fnm{Debdas} \sur{Ghosh}}\email{debdas.mat@iitbhu.ac.in}

\author[1]{\fnm{Anshika}}\email{anshika.rs.mat19@itbhu.ac.in}

\author[2]{\fnm{V.} \sur{Vetrivel}}\email{vetri@iitm.ac.in} 

\affil[1]{\orgdiv{Department of Mathematical Sciences}, \orgname{Indian Institute of Technology (BHU)}, \orgaddress{\city{Varanasi}, \postcode{221005}, \state{Uttar Pradesh}, \country{India}}}

\affil[2]{\orgdiv{Department of Mathematics}, \orgname{Indian Institute of Technology Madras}, \orgaddress{\city{Chennai}, \postcode{600036}, \state{Tamil Nadu}, \country{India}}}

\abstract{This work presents two different types of proximal gradient methods, with line search and without line search, for solving unconstrained set-valued optimization problems under the lower set-less ordering relation induced by a solid cone that is convex, pointed, and closed. The objective mapping of the problem involves finitely many functions, with each one being the sum of a continuously differentiable function and a convex function that is proper and closed. We present an approach to characterize weakly minimal points of the problem with the help of weakly efficient points of a family of vector optimization problems. Thereafter, we establish a stationarity condition along with its connection with weakly minimal points of the problem under study. Based on the stationary condition, the concept of a descent direction at a non-stationary point is discussed. In view of the line search-based method, we formulate an Armijo-type line search condition and establish the existence of such a step-size. For the proposed methods, global convergence is established under mild assumptions. The convergence analysis of the proximal gradient method with line search provides a theoretical advancement over the convergence results previously established for the steepest descent method in set-valued optimization problems. In addition, we analyze the computational complexity of the proposed methods and show that both methods achieve a convergence rate of $\mathcal{O}(1/\sqrt{k})$. Numerical results are reported to test the performance of the methods in practice. \\

\noindent{\textbf{Keywords} Set optimization, Lower set-less ordering, Vector optimization, Robust vector optimization, Proximal gradient methods.} 

\smallskip

\noindent{\textbf{Mathematics Subject Classifications}
90C47, 90C29, 49J53, 90C46.}

}

\maketitle
\section{Introduction}
Set optimization addresses the problem of optimizing a set-valued map whose codomain set is partially ordered by a convex cone that is closed and pointed. It generalizes the conventional scalar or vector optimization framework by allowing the objective to be a set-valued map rather than a real-valued function or a vector-valued function.
Thus, set optimization accommodates a more complex objective map, which makes its study particularly effective in scenarios involving uncertainty, conflicting goals, or incomplete preference information. 
Due to its modeling flexibility, set optimization has profound applications across a wide range of disciplines, including variational inequalities, duality theory, robust optimization, image processing, viability theory, mathematical economics, etc.\  \cite{khan2016set}. 

There are two widely used approaches to solve set optimization problems: the \emph{vector approach} and the \emph{set approach}. A key disadvantage of the vector approach is its inability to accommodate situations where the decision maker evaluates preferences based on comparisons between elements of the image set. To overcome this limitation, the set approach has been adopted. For more clarity, see \cite[Example 2.6.21]{khan2016set}.
In the set approach, solution concepts are formulated through comparing the images of the objective mappings, with a focus on identifying minimal (or maximal) solutions of the problem. This framework was first introduced systematically by Kuroiwa \cite{Kuroiwa1996min} in the context of set optimization and has since been widely adopted in the literature (see, \cite{steepest2021set_optimization,ghosh2025quasi, ghosh2024newton, Ghosh CGM}). In this article, we adopt a set approach as formulated in \cite{Kuroiwa1996min}, employing the lower set-less pre-ordering ($\preceq^{\ell}$) and strict ordering ($\prec^{\ell}$) relations associated with a cone $C \subset \mathbb{R}^{m}$ defined as follows: for $U,V\subseteq \mathbb{R}^{m}$,
\begin{equation}\label{order_relation}
    U \preceq^{\ell} V \iff V \subseteq U + C \quad \text{and}\quad
    U \prec^{\ell} V \iff V \subseteq U + \mathrm{int}(C),    
\end{equation}
where the cone $C$ is assumed to be pointed ($C \cap (-C) = \{0\}$), closed, convex, and solid ($\mathrm{int}(C) \neq \emptyset$). This work aims to solve the following set-valued optimization problem:
\begin{equation}\tag{$\text{SOP}_{\ell}$}\label{sop}
    \preceq^{\ell}\text{-}\min H(x),
\end{equation}
where the set-valued map $H: \mathbb{R}^{n}\rightrightarrows\mathbb{R}^{m}\cup\{+\infty_{C}\}$ is defined by 
\[
H(x):=\{h^{1}(x), h^{2}(x),\ldots,h^{p}(x)\},
\]
such that $h^{j}:= f^{j}+g^{j}$ for each $j=1,2,\ldots,p$; here $+\infty_{C}$ represents the element of $\mathbb{R}^{m}$ plane such that $w\prec_{C} +\infty_{C}$ for all $w\in \mathbb{R}^{m}$. For any index $j\in \{1,2,\ldots,p\}$, the function $f^{j}:\mathbb{R}^{n}\rightarrow \mathbb{R}^{m}$ is assumed to be continuously differentiable and $g^{j}:\mathbb{R}^{n}\rightarrow \mathbb{R}^{m}\cup\{+\infty_{C}\}$ is a convex function which is proper and closed.

Several methodological frameworks have been developed to solve set optimization problems, each addressing specific structural properties of the objective map. These include scalarization-based techniques, sorting-type procedures, derivative-free methods, and first-order algorithms; see \cite{steepest2021set_optimization}. Notably, when derivative information of the functions $h^{j}$, $j=1,2,\ldots,p$, involved in the objective map $H$ is available, first-order methods are typically more effective and computationally preferable to the other approaches. Accordingly, this work introduces methods for solving problem~\eqref{sop}, based on the proximal gradient technique.

When $p=1$, the problem~\eqref{sop} reduces to the following vector optimization problem:
\begin{equation}\tag{$\text{VOP}_{C}$}\label{VOP}
\underset{x\in \mathbb{R}^{n}}{\min} h(x),
\end{equation}
where the function $h: \mathbb{R}^{n} \rightarrow \mathbb{R}^{m}\cup\{+\infty_{C}\}$ is defined as $h(x) := f(x) + g(x)$. Here, $f: \mathbb{R}^{n} \rightarrow \mathbb{R}^{m}$ is a continuously differentiable function and $g: \mathbb{R}^{n} \rightarrow \mathbb{R}^{m}\cup\{+\infty_{C}\}$ is a proper closed convex function, which may not be differentiable.

For solving the multi-objective version of the problem~\eqref{VOP} (with $C$ is the non-negative orthant of $ \mathbb{R}^{m}$), the proximal gradient method is mostly studied as an iterative approach, particularly for large-scale problems. In~\cite{tanabe2019proximal}, two variants were proposed---one with line search and one without line search---the latter assuming Lipschitz continuity on the gradient of the smooth term $f$ of $h$. Under the same assumption, convergence of the without line search variant was later analyzed in~\cite{tanabe2023convergence}. Based on~\cite{tanabe2019proximal}, many other variants have been developed; see \cite{chen2023barzilai,tanabe2023accelerated, zhao2025proximal} and the references therein.
Beyond these classical schemes, a Bregman proximal gradient method was introduced in~\cite{chen2024bregman}, where the Euclidean distance is used in place of a Bregman distance, allowing convergence analysis under relative smoothness assumptions.
In order to solve the problem~\eqref{VOP} with a generic cone $C$, comparatively fewer methods are available. An inertial forward--backward method for computing weakly efficient solutions under convexity was proposed in~\cite{bot2018inertial}. A proximal gradient splitting scheme that relaxes the Lipschitz continuity assumption on the gradient was introduced in~\cite{bello2022proximal}, where convergence to weakly efficient solutions was established.

To our understanding, no method has been established so far for solving the non-smooth composite optimization problem \eqref{sop}. In the special case where $g^{j} = 0$, various methods have been introduced in the literature. The earliest work in this direction is by Bouza et al. \cite{steepest2021set_optimization}, who proposed a steepest descent method with an Armijo-type line search condition, based on a first-order solution framework. 
Subsequently, quasi-Newton and Newton-type methods \cite{ghosh2025quasi, ghosh2024newton} were developed on the basis of this framework, under appropriate assumptions on the objective map, and their convergence rates were analyzed. 
A limitation of these methods \cite{steepest2021set_optimization, ghosh2025quasi, ghosh2024newton} is that their convergence analysis requires a regularity \cite[Definition 4.1]{steepest2021set_optimization} assumption at the solution point. 
More recently, Ghosh et al. \cite{Ghosh CGM} proposed conjugate gradient methods with Wolfe-type line search conditions, also within a first-order solution framework, and established convergence results without imposing any regularity assumptions.

The main motivations and contributions of this work are provided below:
\begin{itemize}
    \item Most of the existing first-order methods for~\eqref{sop} require regularity assumptions on the solution set to guarantee convergence. In contrast, in the proposed methods, we establish convergence without such assumptions. In particular, we show that the steepest descent method proposed in~\cite{steepest2021set_optimization} converges without imposing regularity conditions. 

    \item Computational complexity is a key factor in analyzing optimization methods. To our understanding, no complexity analysis has been provided for existing approaches to~\eqref{sop}. Here, we present such an analysis for the proposed methods.

    \item The problem~\eqref{sop} has several important applications. For instance, an uncertain set-valued optimization problem can be formulated within the framework of~\eqref{sop}. Moreover, certain image processing problems can also be addressed using this formulation. In this study, we show through examples that the proposed methods can be employed directly on uncertain set-valued optimization problems.

    \item For the problem~\eqref{VOP}, there is no proximal gradient method in the literature. Since, for $p=1$, the problem~\eqref{sop} reduces to~\eqref{VOP}, the methods developed in this work are also applicable to solve the vector optimization problem~\eqref{VOP}. 
\end{itemize}

In this article, we neatly propose two proximal gradient methods based on a first-order solution approach: one incorporating a line search condition and the other without it. 

The structure of the article is given as follows. To facilitate the development of the proposed results, Section~\ref{sec.2}  introduces the necessary notations and preliminary concepts. Section~\ref{sec.3}  is devoted to the discussion of optimality conditions and descent directions. We first derive the optimality conditions for the vector optimization problem~\eqref{VOP} and then extend these results to the set optimization problem~\eqref{sop}, which form the foundation of the proposed methods. Based on the optimality conditions of the problem~\eqref{sop}, we introduce the notion of descent directions. Section~\ref{sec.4} details the proposed methods, structured in Algorithm~\ref{algo 1} and Algorithm~\ref{algo}. Their convergence properties are established in Section~\ref{sec.5}, where we also analyze the computational complexity and derive the rate of convergence. In Section~\ref{sec.6}, we illustrate the practical performance of the proposed methods through some~\eqref{sop} test problems. Finally, Section~\ref{sec.7} concludes the study and outlines possible directions for future research.

\section{Preliminaries and terminologies}\label{sec.2}
The notations and terms employed in the sequel are as follows.

\begin{itemize}
    \item $\mathbb{R}$ and $\mathbb{N}$ denote the sets of real numbers and positive integers, respectively.
    \item $\mathbb{R}^m_{+}$ refers to the non-negative orthant of $\mathbb{R}^m$, while $\mathbb{R}^m_{++}$ denotes its positive orthant.
    \item $[p]:=\{1, 2, \ldots, p\}$ for any $[p]\in \mathbb{N}$.
    
    \item $\bar{y}$ is taken as a reference point of $\mathbb{R}^{n}$.
    
    \item Iterative sequence of points of $\mathbb{R}^{n}$ is denoted by $\left\{x^{k}\right\}$.
    \item For any set $A$, the symbols $\mathrm{int}(A)$, $\mathrm{conv}(A)$, and $\mathcal{P}(A)$ represent its interior, convex hull, and power set, respectively.
     \item $\mathrm{dom}(H):=\left\{x\in \mathbb{R}^{n}~\middle |~h^{j}(x)\prec_{C}+\infty_{C} \quad \forall j\in [p]\right\}.$
    \item $C^{*}:=\left\{z\in \mathbb{R}^{m}~\middle |~ \bar{w}^{\top}z\geq 0 ~ \forall  \bar{w}\in C\right\}$ is the dual cone of $C$.
\end{itemize}

Note that cone $C$ for the problem \eqref{sop} is closed and convex. Therefore, $C^{**}=C$. Accordingly, we get
\begin{align*}
     -C=\left\{\bar{w}\in \mathbb{R}^{m}~\middle |~ z^{\top}\bar{w}\leq 0 ~\forall z\in C^{*}\right\} \quad \text{and }
      -\mathrm{int}(C)=\left\{\bar{w}\in \mathbb{R}^{m}~\middle |~ z^{\top}\bar{w}< 0 ~ \forall z\in C^{*}-\{0\}\right\}.
\end{align*}

A convex compact set $\mathcal{Z} \subset \mathbb{R}^m$ is called a generator of the dual cone $C^*$ if the following relation holds:
\[
\text{cone}(\mathrm{conv}(\mathcal{Z})) = C^*.
\]
For analytical purposes, the set $\mathcal{Z}$ is often taken as
\begin{equation}\label{gen_set}
  \mathcal{Z} := \left\{ z \in C^* ~\middle|~ \|z\| = 1 \right\}.  
\end{equation}
Moreover, if the cone $C$ is polyhedral, then so is its dual cone $C^*$, and the set $\mathcal{Z}$ corresponds to the finite collection of extreme rays of $C^*$.

To compare a pair of vectors $\bar{w}$ and $ \hat{w}$ in $ \mathbb{R}^m $, we define partial and strict order induced by the cone \( C \subset \mathbb{R}^m \) as follows \cite{Gopfert_book}:
\[
\bar{w} \preceq_C \hat{w}  \iff  \hat{w} - \bar{w} \in C \quad \text{and} \quad \bar{w} \prec_C \hat{w}  \iff  \hat{w} - \bar{w} \in \mathrm{int}(C).
\]

To scalarize vectors, we use the \textit{Drummond-Svaiter scalarizing function} \( \phi : \mathbb{R}^m \rightarrow \mathbb{R}\cup\{+\infty\} \), defined as follows \cite{drummond2005steepest}: for any \( w \in \mathbb{R}^m \),
\begin{equation*}
    \phi(w) := \sup_{z \in \mathcal{Z}} z^{\top} w,
\end{equation*}
where \( \mathcal{Z} \subset \mathbb{R}^m \) is the generator set as given in \eqref{gen_set}. Note that $\mathcal{Z}$ is compact. Therefore, $\phi$ is well-defined.\\

\begin{lemma} \textnormal{\cite{drummond2005steepest}} \label{scal_func}
Let the function \( \phi \) be defined as in \eqref{gen_set}. Then, for all \(\bar{w}, \hat{w}\in \mathbb{R}^m \) and $c\geq 0$, the following properties hold. 
\begin{enumerate}[(i)]
    \item Sub-linearity:
    \[
    \phi(\bar{w}+ \hat{w}) \leq \phi(\bar{w}) + \phi(\hat{w}), \quad \text{and} \quad \phi(c\,\bar{w}) =c\, \phi(\bar{w}).
    \]
    
    \item Monotonicity:
    \[
    \text{If } \bar{w} \preceq_C \hat{w}, \text{ then } \phi(\bar{w}) \leq \phi(\hat{w}); \quad \text{if } \bar{w} \prec_C \hat{w}, \text{ then } \phi(\bar{w}) < \phi(\hat{w}).
    \]
    
    \item Lipschitz continuity:
    \[
    \phi(\bar{w}) - \phi(\hat{w}) \leq \|\bar{w} - \hat{w}\|.
    \]
    
    \item Representability of the cone:
    \[
    -C = \left\{ \bar{w} \in \mathbb{R}^m \,\middle|\, \phi(\bar{w}) \leq 0 \right\} \quad \text{and}\quad
    -\mathrm{int}(C) = \left\{ \bar{w} \in \mathbb{R}^m \,\middle|\, \phi(\bar{w}) < 0 \right\}.
    \]
\end{enumerate}
\end{lemma}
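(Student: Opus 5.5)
The plan is to work directly from the definition $\phi(w)=\sup_{z\in\mathcal{Z}} z^{\top}w$. Here $\mathcal{Z}=\{z\in C^*:\|z\|=1\}$ is compact, so the supremum is attained and finite. It is also nonempty: since $C$ is pointed and closed, $C^*$ has nonempty interior and in particular $C^*\neq\{0\}$. Each part reduces to an elementary property of suprema of linear functionals, combined with the dual descriptions of $-C$ and $-\mathrm{int}(C)$ recorded just before the lemma.

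For (i), I will use $z^{\top}(\bar w+\hat w)=z^{\top}\bar w+z^{\top}\hat w\le \phi(\bar w)+\phi(\hat w)$ for every $z\in\mathcal{Z}$ and then take the supremum over $z$. For positive homogeneity with $c\ge 0$, the identity $\sup_z c\,z^{\top}\bar w=c\sup_z z^{\top}\bar w$ is immediate.

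For (iii), I will write $z^{\top}\bar w=z^{\top}(\bar w-\hat w)+z^{\top}\hat w\le \|z\|\|\bar w-\hat w\|+\phi(\hat w)$ by Cauchy--Schwarz, use $\|z\|=1$, and take the supremum.

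For (iv), I will first show that $\phi(\bar w)\le 0$ if and only if $z^{\top}\bar w\le 0$ for all $z\in\mathcal{Z}$. By positive scaling this is equivalent to $z^{\top}\bar w\le0$ for all $z\in C^*$, and the stated description of $-C$ (from $C^{**}=C$) turns this into $\bar w\in -C$. For the strict version, attainment of the supremum on the compact set $\mathcal{Z}$ shows that $\phi(\bar w)<0$ if and only if $z^{\top}\bar w<0$ for every $z\in\mathcal{Z}$. By scaling this is the same as $z^{\top}\bar w<0$ for every $z\in C^*\setminus\{0\}$, which the displayed characterization identifies with $\bar w\in-\mathrm{int}(C)$. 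Compactness is the one point needing care: without attainment, $\sup<0$ would not follow from pointwise negativity.

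For (ii), I will combine (i) and (iv). If $\bar w\preceq_C\hat w$, then $\bar w-\hat w\in -C$, so (iv) gives $\phi(\bar w-\hat w)\le0$. Subadditivity then gives $\phi(\bar w)\le\phi(\bar w-\hat w)+\phi(\hat w)\le\phi(\hat w)$. The strict case is the same argument using $\phi(\bar w-\hat w)<0$.

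The main obstacle is the strict part of (iv), since it relies on the dual description of $-\mathrm{int}(C)$ together with attainment. If I prove that description from scratch, the nontrivial direction is this: given $\bar w\in-\mathrm{int}(C)$, a small ball around $\bar w$ lies in $-C$, so for $z\in C^*$ with $\|z\|=1$ we have $z^{\top}(\bar w+\varepsilon z)\le0$, which gives $z^{\top}\bar w\le-\varepsilon<0$. The converse direction uses compactness of $\mathcal{Z}$ to get a uniform negative margin, followed by a perturbation argument.
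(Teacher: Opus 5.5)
Your proof is correct. The paper gives no proof of this lemma and only cites Drummond--Svaiter, and your argument is essentially the standard one from that source: you verify everything directly from $\phi(w)=\sup_{z\in\mathcal{Z}} z^{\top}w$, obtaining (i) and (iii) by elementary supremum estimates, (iv) by rescaling together with attainment of the supremum on the compact set $\mathcal{Z}$ and the dual descriptions of $-C$ and $-\mathrm{int}(C)$, and then (ii) from (i) and (iv).
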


We next discuss convexity of a function relative to the cone \( C \).
A function $h : \mathbb{R}^n \rightarrow \mathbb{R}^{m}\cup\{+\infty_{C}\}$ is called $C$-convex on a convex set $D \subseteq \mathbb{R}^n$ if, for any two points $\bar{y}, \hat{y} \in D$ and $\lambda \in (0,1]$, one holds:
\[
h(\lambda \bar{y} + (1 - \lambda)\hat{y}) \preceq_C \lambda h(\bar{y}) + (1 - \lambda)h(\hat{y}).
\]
Moreover, if $h$ is differentiable, then $h$ is convex on $D$ if and only if
\[
\phi\left( h(\hat{y}) - h(\bar{y}) - \nabla h(\bar{y})^\top (\bar{y} - \hat{y}) \right) \geq 0 
\quad \forall \bar{y}, \hat{y} \in D.
\]
    


We use the notion that $D_{v}h(\bar{y})$ is the directional derivative of a function $h$ at $\bar{y}$ in the direction of $v\in \mathbb{R}^{n}$, which is defined as
\[D_{v}h(\bar{y}):= \lim_{\alpha\to 0} \frac{1}{\alpha}\left(h(\bar{y}+\alpha v)-h(\bar{y})\right).\]

We now define a function $\Tilde{g}: (0, \infty)\to \mathbb{R}^{m}\cup\{+\infty_{C}\}$ based on a convex function $g:\mathbb{R}^n \rightarrow \mathbb{R}^{m}\cup\{+\infty_{C}\}$ corresponding to points $\bar{y}, ~\tilde{v}\in \mathbb{R}^{n}$ as
\[\Tilde{g}(\alpha):= \frac{1}{\alpha}\left(g(\bar{y}+\alpha \tilde{v})-g(\bar{y})\right).\]
Then, $\Tilde{g}$ has a monotone property. That is, for any $0< \alpha\leq \beta$, the relation $\Tilde{g}(\alpha)\preceq_{C} \Tilde{g} (\beta)$ holds. In particular, for $\beta =1$, we get 
\begin{equation}\label{convex_direction_deri}
    \frac{1}{\alpha}\left(g(\bar{y}+\alpha \tilde{v})-g(\bar{y})\right)\preceq_{C} g(\bar{y}+\tilde{v})-g (\bar{y}) \quad \text{for all } \alpha \in (0, 1]. 
\end{equation}

Next, we present a descent lemma in vector form, adapted from the scalar descent lemma in \cite[Proposition A.24]{Bertsekas_D}, which will be essential to the subsequent analysis.
\\

\begin{lemma}\label{descent lemma}
    Let $f:\mathbb{R}^n \rightarrow \mathbb{R}^m$ be continuously differentiable and $\nabla f$ satisfy an $L$-Lipschitz condition. Then,
    \[\phi\left(f(x+y)-f(x)-\nabla f(x)^{\top}y\right)\leq \frac{L}{2}\|y\|^2 \quad \text{for all } x,y\in \mathbb{R}^{n}.\]
\end{lemma}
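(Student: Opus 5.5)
The plan is to scalarize and reduce to the classical scalar descent lemma. By definition $\phi(w)=\sup_{z\in\mathcal{Z}} z^{\top}w$, so it is enough to show that for every fixed $z\in\mathcal{Z}$,
\[
z^{\top}\left(f(x+y)-f(x)-\nabla f(x)^{\top}y\right)\leq \frac{L}{2}\|y\|^{2}.
\]
Taking the supremum over $z\in\mathcal{Z}$ then gives the claim, since the right-hand side does not depend on $z$.

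Fix $z\in\mathcal{Z}$ and define the scalar function $\psi_z(x):=z^{\top}f(x)$. It is continuously differentiable with $\nabla\psi_z(x)=\nabla f(x)z$. Here I read $\nabla f(x)$ as the $n\times m$ matrix whose columns are the gradients of the components $f_i$, consistent with the expression $\nabla f(x)^{\top}y\in\mathbb{R}^m$. Consequently $z^{\top}\nabla f(x)^{\top}y=\nabla\psi_z(x)^{\top}y$.

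Next, $\nabla\psi_z$ is Lipschitz with constant $L$. Indeed, $\|\nabla f(u)z-\nabla f(v)z\|\leq\|\nabla f(u)-\nabla f(v)\|\,\|z\|\leq L\|u-v\|$, using $\|z\|=1$ and the operator norm. I take the $L$-Lipschitz condition on $\nabla f$ to be with respect to the operator norm. If it is instead meant in the Frobenius norm, the same bound holds because the operator norm is dominated by the Frobenius norm.

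Now apply \cite[Proposition A.24]{Bertsekas_D} to $\psi_z$. Alternatively, write directly
\[
\psi_z(x+y)-\psi_z(x)-\nabla\psi_z(x)^{\top}y=\int_0^1\left(\nabla\psi_z(x+ty)-\nabla\psi_z(x)\right)^{\top}y\,dt,
\]
and bound the integrand by $Lt\|y\|^2$. Integrating gives $\frac{L}{2}\|y\|^2$.

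This yields the inequality for every $z\in\mathcal{Z}$, and taking the supremum concludes the proof. The only delicate point is the norm convention for the Lipschitz condition on the Jacobian, which is handled by the operator-norm bound above together with $\|z\|=1$ for $z\in\mathcal{Z}$ from \eqref{gen_set}.
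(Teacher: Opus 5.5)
Your proof is correct and is essentially the paper's argument. The paper writes $f(x+y)-f(x)-\nabla f(x)^{\top}y=\int_0^1(\nabla f(x+ty)-\nabla f(x))^{\top}y\,\mathrm{d}t$ in vector form and then bounds $z^{\top}(\cdot)$ uniformly over $z\in\mathcal{Z}$ via Cauchy--Schwarz, $\|z\|=1$ and $\|\nabla f(x+ty)-\nabla f(x)\|\leq Lt\|y\|$; you fix $z$ first and apply the scalar descent lemma to $z^{\top}f$, which is the same computation in a different order, and your explicit operator-norm remark makes precise a convention the paper leaves implicit.
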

\begin{proof}
    Define a function $ \Psi : \mathbb{R}\rightarrow \mathbb{R}^m$ by
    $\Psi (t):= f(x+ty)$.\\ Note that
   $ \diff{\Psi(t)}{t}=\nabla f(x+ty)^{\top}y$. Therefore,
   \begin{align*}
      & f(x+y)-f(x)= \Psi(1)-\Psi(0)= \int_{0}^1 \nabla f(x+ty)^{\top}y\,\mathrm{d}t \\
     \implies  & f(x+y)-f(x) - \nabla f(x)^{\top}y= \int_{0}^1 (\nabla f(x+ty)-\nabla f(x))^{\top}y\,\mathrm{d}t \\
     \implies  & \phi\left(f(x+y)-f(x) - \nabla f(x)^{\top}y\right)= \underset{z\in \mathcal{Z}}{\sup} \left \{ z^{\top}  \int_{0}^1 (\nabla f(x+ty)-\nabla f(x))^{\top}y\,\mathrm{d}t \right\},   
   \end{align*}
   by the definition of $\phi$. Since $ w^{\top} z \leq \|w\|\|z\|$ and $\|\int_{0}^{1} f(x)\,\mathrm{d}x \|\leq \int_{0}^{1} \|f(x)\|\,\mathrm{d}x$, by the definition of the set $\mathcal{Z}$, we have
   \begin{align*}
      \phi\left(f(x+y)-f(x) - \nabla f(x)^{\top}y\right) \leq \int_{0}^1 \|\nabla f(x+ty)-\nabla f(x)\|~\|y\|\,\mathrm{d}t \leq \frac{L}{2}\|y\|^{2}.  
   \end{align*}
   Hence, the result holds.
\end{proof}

Based on ordering relations \eqref{order_relation}, we define \emph{minimal set} for a given set. For any set \( U \subseteq \mathbb{R}^m \), the set of all non-dominated points in \( U \) with respect to the cone \( C \) is called the \emph{minimal set} of \( U \), denoted by \( \mathcal{M}(U, C) \), and is given by (see \cite{steepest2021set_optimization})
\begin{equation}\label{minimal_set}
    \mathcal{M}(U, C) := \left\{ \bar{y} \in U ~\middle|~ \nexists\, \hat{y} \in U,\, \hat{y} \neq \bar{y} \text{ such that } \hat{y} \preceq_C \bar{y} \right\}.
\end{equation}
Note that if $U$ is compact, then the set $U$ satisfies a domination rule with the minimal set of $U$ associated with the cone $C$ as defined below:
\begin{equation}\label{Domination_rule}
    U+C=\mathcal{M}(U,C)+C.
\end{equation}

We conclude this section by presenting the following function, which will be relevant for the later analysis.
Define $\xi : \mathcal{P}(\mathbb{R}^m) \rightarrow \mathbb{R}$ by
\begin{equation}\label{zeta}
 \xi(Z) := \inf_{z \in Z} \phi(z).   
\end{equation}
The function $\xi$ possesses a monotonicity property, meaning that for any pair of sets $U, V \subseteq \mathbb{R}^m$,

$$
U \preceq^{\ell} V \implies \xi(U) \leq \xi(V).
$$
\section{Optimality condition and descent direction}\label{sec.3}

We begin with the analysis of the optimality conditions for \eqref{sop} by considering the vector counterpart \eqref{VOP}. A point $\bar{y}$ is known as a local weakly efficient point of \eqref{VOP} if there exists a neighborhood $N_{\epsilon}(\bar{y})$ of $\bar{y}$ such that
\[\nexists~ \hat{y}\in N_{\epsilon}(\bar{y})\quad: \quad h(\hat{y})\prec_{C} h(\bar{y}).\]
Likewise, if $N_{\epsilon}(\bar{y})=\mathbb{R}^{n}$, the point $\bar{y}$ is known as a weakly efficient point of \eqref{VOP}. In the following proposition, a necessary optimality condition is obtained for the problem \eqref{VOP}.\\
\begin{proposition}\label{pro_sta_vector}
    Consider the problem \eqref{VOP}. If a point $\bar{y}$ is a local weakly efficient point, then $D_{v}h(\bar{y})\notin -\mathrm{int}(C)$ for all $v\in \mathbb{R}^{n}$. The converse is true if the objective function $h$ is convex with respect to $C$.
\end{proposition}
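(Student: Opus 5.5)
We argue both directions by contradiction, using the representability property of $\phi$ in Lemma~\ref{scal_func}(iv) to convert membership in $-\mathrm{int}(C)$ into a strict sign condition. Throughout, $\bar{y}\in\mathrm{dom}(h)$, and $D_{v}h(\bar{y})$ is understood as the one-sided limit $\alpha\downarrow 0$. For the convex part $g$ this limit exists in the extended sense by the monotonicity of $\tilde{g}$. Since $f$ is $C^{1}$, $D_{v}h(\bar{y})=\nabla f(\bar{y})^{\top}v+D_{v}g(\bar{y})$ whenever the latter is finite.

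\textbf{Necessity.} Suppose $\bar{y}$ is a local weakly efficient point but $D_{v}h(\bar{y})\in-\mathrm{int}(C)$ for some $v$.
\begin{enumerate}[(i)]
\item Since $-\mathrm{int}(C)$ is open, there is $\varepsilon>0$ with $D_{v}h(\bar{y})+B_{\varepsilon}(0)\subseteq -\mathrm{int}(C)$.
\item By definition of the limit, for all sufficiently small $\alpha>0$,
\[
\tfrac{1}{\alpha}\bigl(h(\bar{y}+\alpha v)-h(\bar{y})\bigr)\in D_{v}h(\bar{y})+B_{\varepsilon}(0)\subseteq-\mathrm{int}(C).
\]
\item Because $\mathrm{int}(C)$ is a cone, multiplying by $\alpha>0$ gives $h(\bar{y}+\alpha v)\prec_{C} h(\bar{y})$.
\item Taking $\alpha$ small enough that $\bar{y}+\alpha v\in N_{\epsilon}(\bar{y})$ contradicts local weak efficiency.
\end{enumerate}
If $D_{v}h(\bar{y})$ is not a finite vector (e.g. $g(\bar{y}+\alpha v)=+\infty_C$), it cannot lie in $-\mathrm{int}(C)$, so there is nothing to prove.

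\textbf{Sufficiency under $C$-convexity.} Assume $h$ is $C$-convex and $D_{v}h(\bar{y})\notin-\mathrm{int}(C)$ for all $v$. Suppose some $\hat{y}$ satisfies $h(\hat{y})\prec_{C}h(\bar{y})$, and set $v:=\hat{y}-\bar{y}$.
\begin{enumerate}[(i)]
\item Applying \eqref{convex_direction_deri} to the convex function $h$ (the same difference-quotient monotonicity holds for $h$ as for $g$) with $\tilde{v}=v$ gives, for all $\alpha\in(0,1]$,
\[
\tfrac{1}{\alpha}\bigl(h(\bar{y}+\alpha v)-h(\bar{y})\bigr)\preceq_{C} h(\hat{y})-h(\bar{y}).
\]
\item Letting $\alpha\downarrow0$, and using that $C$ is closed so the relation $\preceq_C$ is preserved under limits, yields $D_{v}h(\bar{y})\preceq_{C} h(\hat{y})-h(\bar{y})$.
\item By assumption $h(\hat{y})-h(\bar{y})\in-\mathrm{int}(C)$, and $-\mathrm{int}(C)-C\subseteq-\mathrm{int}(C)$. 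Hence $D_{v}h(\bar{y})\in-\mathrm{int}(C)$, a contradiction.
\end{enumerate}
Equivalently, in scalar form: Lemma~\ref{scal_func}(ii) gives $\phi(D_{v}h(\bar{y}))\le\phi(h(\hat{y})-h(\bar{y}))<0$, and Lemma~\ref{scal_func}(iv) then places $D_{v}h(\bar{y})$ in $-\mathrm{int}(C)$. This shows $\bar{y}$ is weakly efficient (globally), which is stronger than the claimed local property.

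\textbf{Main obstacle.} The delicate step is the limit passage in the sufficiency part. We must confirm that the directional derivative of $h=f+g$ exists, since the difference quotients of $g$ are $C$-monotone. We must also handle the case where it is unbounded below in $C$-order or takes the value $+\infty_C$. Boundedness above is not an issue: the difference quotients are $C$-dominated by the finite vector $h(\hat{y})-h(\bar{y})$, and $\hat{y}\in\mathrm{dom}(h)$ because $h(\hat{y})\prec_C h(\bar{y})$. If the limit were $-\infty$ in some component, the scalar form of step (iii) still gives $\phi<0$ along the quotients. In that case we would interpret the condition through $\phi$ of the quotients, or restrict the statement to directions where $D_vh(\bar{y})$ exists, as is implicitly assumed in the statement.
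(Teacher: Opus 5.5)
Your proof is correct and follows the same route as the paper. For necessity, a directional derivative in $-\mathrm{int}(C)$ forces $h(\bar{y}+\alpha v)-h(\bar{y})$ into $-\mathrm{int}(C)$ for small $\alpha>0$; for the converse, $C$-convexity gives $\tfrac{1}{\alpha}\bigl(h(\bar{y}+\alpha v)-h(\bar{y})\bigr)\preceq_C h(\hat{y})-h(\bar{y})\prec_C 0$, and one lets $\alpha\downarrow 0$. Your explicit use of the closedness of $C$ and of $-\mathrm{int}(C)-C\subseteq-\mathrm{int}(C)$ in that limit is the step the paper leaves implicit, since a limit of points of $-\mathrm{int}(C)$ alone would only lie in $-C$; your caveat that $D_{v}h(\bar{y})$ may fail to exist as a finite vector is a genuine edge case that the paper also tacitly excludes.
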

\begin{proof}
 On the contrary, let there exist a $\bar{v}\in \mathbb{R}^{n}$ such that $D_{\bar{v}}h(\bar{y})\in -\mathrm{int}(C)$. Therefore, for sufficiently small $\alpha > 0$, it follows that
 \[ \frac{1}{\alpha}\left(h(\bar{y}+\alpha \bar{v})-h(\bar{y})\right)\prec_{C} 0.\]
Consequently, $\bar{y}$ is not a local weakly efficient point.\\

We now assume that the objective function $h$ is convex with respect to $C$ and the point $\bar{y}$ is not a weakly efficient point of the problem \eqref{VOP}.
Accordingly, there exists $\hat{y}\in \mathbb{R}^{n}$ such that $$h(\hat{y})\prec_{C} h(\bar{y}).$$
As $h$ is convex with respect to $C$, we have
\[\frac{1}{\alpha}\left(h\left(\bar{y}+\alpha(\hat{y}-\bar{y}))-h(\bar{y}\right)\right)\overset{\eqref{convex_direction_deri}}{\preceq_{C}}h(\hat{y})-h(\bar{y})\prec_{C}0 \quad  \text{for any }\alpha \in (0,1).\]
This implies, $D_{\bar{v}}h(\bar{y})\in -\mathrm{int}(C)$, where $\bar{v}=\hat{y}-\bar{y}$. Hence, the result follows. 
\end{proof}
In view of Proposition \ref{pro_sta_vector}, we call $\bar{y}\in \mathbb{R}^{n}$ a stationary point of \eqref{VOP} if
\begin{equation}\label{efficient_stationary}
    D_{v}h(\bar{y})\notin -\mathrm{int}(C) \quad \text{for all }v\in \mathbb{R}^{n}.\\
\end{equation}

We describe solution framework for \eqref{sop}: a point $\bar{y}$ is called local weakly minimal solution of \eqref{sop} if there is a neighborhood $N_{\epsilon}(\bar{y})$ of $\bar{y}$ such that for no $y\in N_{\epsilon}(\bar{y})$, $H(y)\prec^{\ell} H(\bar{y})$. Likewise, if $N_{\epsilon}(\bar{y})=\mathbb{R}^{n}$, the point $\bar{y}$ is called as a weakly minimal solution of \eqref{sop}.

Before proceeding to the derivation of optimality conditions for the problem \eqref{sop}, we present an interesting observation. 
It is an established fact that for a convex \eqref{VOP} (with $f=0$), every locally efficient solution is also globally efficient. However, this property does not necessarily hold for an \eqref{sop} with $f^{j}=0$ and $g^{j}$ is convex for all $j\in [p]$. 
In particular, even if \( \bar{y} \) and \( \hat{y} \) are both local optimal solutions of a convex \eqref{sop} problem, it is possible that
$H(\hat{y}) \prec^\ell H(\bar{y})$. For instance, consider an illustration discussed below. \\

\noindent
\textbf{Example.}
  Let the problem \eqref{sop} with the objective map \(H:\mathbb{R} \rightrightarrows \mathbb{R}\) is defined as 
  \begin{equation}\label{example_convex}
    H(x) := \{ g^{1}(x),\, g^{2}(x) \},  
  \end{equation} 
where the ordering cone is \(C = \mathbb{R}_{+}\).  
The  functions \(g^{j} : \mathbb{R} \to \mathbb{R}\), for \(j = 1,2\), given by  
\[
g^{1}(x): = (x- 2.5)^{2} - 1 \text{ and } g^{2}(x) := (x- 1)^{2} - 2.
\]

\begin{figure}  
\centering 
\begin{tikzpicture}
\begin{axis}[
color= black, 
thick, 
xmin=-2.5, 
xmax=5.5, 
ymin=-2.5, 
ymax=5, 
axis equal image, 
font=\footnotesize, 
xtick distance=1, 
ytick distance=1, 
inner axis line style={stealth-stealth}, 
xlabel = {$x$}, 
ylabel = {$y$}, 
axis x line=middle,
axis y line=middle 
] 
\addplot[
domain=0.5:5,   
samples=200, 
color=purple, 
]{(x - 2.5)^2 - 1};
\addplot[ 
domain=-1.9:2.8,  
samples=200, 
color=blue, 
]{(x - 1)^2 - 2}; 
\end{axis}
\node at (0.5,4) {\footnotesize $g^2(x)$};
\node at (3,3.5) {\footnotesize $g^1(x)$};
\end{tikzpicture} 
\caption{Graph of $g^{1}$ and $g^{2}$ in \(H\) as given in \eqref{example_convex}}\label{wrap-fig:1} 
\end{figure}
\noindent
Note that $g^{1}$ and $g^{2}$ are convex.
From Fig.~\ref{wrap-fig:1}, it is clear that \(\bar{y} = 1\) and \(\hat{y} = 2.5\) are both local weakly minimal solutions of~\eqref{sop}.
Moreover, $H(\bar{y})=\{-2,~1.25\}$ and  $H(\hat{y})=\{-1,~0.25\}$. 
Therefore, it follows that $H(\bar{y}) \prec^\ell H(\hat{y})$. Thus, in this case, every local weakly minimal solution is not a weakly minimal solution.

To derive optimality and stationarity conditions, we revisit the solution approach proposed by Bouza et al. \cite{steepest2021set_optimization}. We begin by introducing some related definitions. Thereafter, we demonstrate why and how they can be used to formulate optimality conditions for the problem \eqref{sop}.\\

The following index-based set-valued maps are defined with respect to the objective map \( H \) in the problem~\eqref{sop}.

\begin{definition} \textnormal{\cite{steepest2021set_optimization}}
\begin{enumerate}[(i)]
\item The map \( I : \mathbb{R}^n \rightrightarrows [p] \), called active index map, is defined by the set of all indices \( j \in [p] \) such that \( h^j(x) \in \mathcal{M}(H(x), C), ~x\in \mathbb{R}^{n}\). 
    
\item For a given \( u \in \mathbb{R}^m \), the map \( I_u(x): \mathbb{R}^n \rightrightarrows [p] \), referred as the value index map, is defined by the set of all indices \( j \in [p] \) such that \( h^j(x) = u \), $x\in \mathbb{R}^{n}$. \\ 
\end{enumerate}
\end{definition}

\begin{definition} \textnormal{\cite{steepest2021set_optimization}}
For a given point \( \bar{y}\), a cardinality function \( \omega: \mathbb{R}^n \rightarrow [p] \) is defined as the number of elements in the minimal set \( \mathcal{M}(H(\bar{y}), C) \), i.e.,
\begin{equation*}
 \omega(\bar{y}) := \left| \mathcal{M}(H(\bar{y}), C) \right|.   
\end{equation*}
\end{definition}


\begin{definition} \textnormal{\cite{steepest2021set_optimization}} 
Let \( x \in \mathbb{R}^n \) and \( \{ u^x_1, u^x_2, \ldots, u^x_{\omega(x)} \} \) be an enumeration of \( \mathcal{M}(H(x), C) \). The \emph{partition set} \( P_x \) is defined by
\begin{equation}\label{pat_set}
   P_x := \prod_{j = 1}^{\omega(x)} I_{u^x_j}(x), 
\end{equation}
where \( I_{u^x_j}(x) \subseteq [p] \) denotes the set of indices \( j \) such that \( h^j(x) = u^x_j \).\\ 
\end{definition}

We now discuss the significance of the \textit{minimal set} and \textit{partition set} at a given point $\bar{y}$ in obtaining a weakly minimal solution to the problem \eqref{sop}.

\begin{enumerate}[(i)]
    \item The \textit{minimal set} is essential in the process of identifying weakly minimal solutions to the problem~\eqref{sop}, owing to the following observation.\\

    \noindent
 For any \( \bar{y} \in \mathbb{R}^n \), the domination rule~\eqref{Domination_rule} implies that
    \(
    \mathcal{M}(H(\bar{y}), C) + C = H(\bar{y}) + C.
   \)
    Accordingly, for \(x \in \mathbb{R}^n \),
    \[
    \mathcal{M}(H(x), C) \prec^\ell \mathcal{M}(H(\bar{y}), C)\quad \iff \quad  H(x) \prec^\ell H(\bar{y}).
    \]

 In particular, if there is a neighborhood $N_{\epsilon}(\bar{y})\subseteq \mathbb{R}^n$ of $\bar{y}$ such that no point 
$x \in N_{\epsilon}(\bar{y}) $ satisfies
\[
\mathcal{M}(H(x), C) \prec^\ell \mathcal{M}(H(\bar{y}), C),
\]
then $\bar{y}$ is a local weakly minimal solution to \eqref{sop}.

Therefore, in verifying if a chosen point $\bar{y}$ is a weakly minimal solution or not, and if not, then in identifying a potentially better alternative $x\in \mathbb{R}^{n}$, it is sufficient to investigate the minimal set of \( H \) at $\bar{y}$.\\

\item  The \textit{partition set} plays a crucial role in handling the minimal set \( \mathcal{M}(H(x), C) \).
This role can be understood as follows. Let $\bar{y}$ be a chosen point. For each $a\in P_{\bar{y}}$, define 
\[
        \Tilde{h}^a : \mathbb{R}^n \rightarrow \underbrace{\mathbb{R}^m \times \mathbb{R}^m \times \cdots \times \mathbb{R}^m}_{\omega(\bar{y})~\text{times}} \quad \text{by }\Tilde{h}^a(x) := \left( h^{a_1}(x), h^{a_2}(x), \ldots, h^{a_{\omega(\bar{y})}}(x) \right)^{\top}.
    \]
At $a\in P_{\bar{y}}$, the optimization problem \eqref{VOP} takes the form \begin{equation}\tag{$\mathrm{VOP}^{a}_{\Tilde{C}}$}
        \label{vector_opt_family}
        \preceq_{\Tilde{C}} \text{ - } \min_{x \in \mathbb{R}^n} \Tilde{h}^a(x),
    \end{equation}
    where $ \Tilde{C} := \underbrace{C \times C \times \cdots \times C}_{\omega(\bar{y})~\text{times}}$.\\
    Note that for any partition element $a\in P_{\bar{y}}$, if there exists $\hat{y}\in \mathbb{R}^{n}$ such that $\Tilde{h}^{a}(\hat{y})\prec_{\Tilde{C}} \Tilde{h}^{a}(\bar{y})$, then $H(\hat{y})\prec^{\ell}H(\bar{y}).$
    This is because the image $h^{a_j}(\bar{y})$ corresponding to each index $a_j$ belongs to the partition element $a\in P_{\bar{y}}$, and is distinct by the definition of the partition set. Accordingly,
\begin{itemize}
        \item by using some element of the partition set, one can find a better alternative than $\bar{y}$ with the help of \eqref{vector_opt_family} if $\bar{y}$ is not a (local) weakly minimal solution of \eqref{sop}.\\
    \item  $\bar{y}$ is a local weakly minimal solution of \eqref{sop} only if it is a local weakly efficient solution of \eqref{vector_opt_family} for all $a\in P_{\bar{y}}$. Converse is also true as established below in Lemma \ref{Lemma_set_to_vec} under lower semi-continuity of $h^{j},~j\in [p]$.
\end{itemize}     
\end{enumerate}


\begin{lemma}\label{Lemma_set_to_vec}
Consider the problem \eqref{sop} and a point $\bar{y}$.
    Then, \( \bar{y} \) is a local weakly minimal solution of the problem \eqref{sop} if  \( \bar{y} \) is a local weakly efficient solution of the problem \eqref{vector_opt_family} for every \( a \in P_{\bar{y}} \).
\end{lemma}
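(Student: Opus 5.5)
We argue by contraposition: assuming $\bar{y}$ is not a local weakly minimal solution of \eqref{sop}, we build a partition element $a\in P_{\bar{y}}$ for which $\bar{y}$ is not a local weakly efficient solution of \eqref{vector_opt_family}. Throughout we use that each $h^{j}=f^{j}+g^{j}$ is lower semicontinuous with respect to $C$, since $f^{j}$ is continuous and $g^{j}$ is closed. Concretely, we use this in the form: for every $z\in C^{*}$ the map $x\mapsto z^{\top}h^{j}(x)$ is lsc. We also take $\bar{y}\in\mathrm{dom}(H)$, so that the minimal elements $u^{\bar{y}}_1,\ldots,u^{\bar{y}}_{\omega(\bar{y})}$ of $H(\bar{y})$ are finite vectors.

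\emph{Step 1: extract fixed indices.} If $\bar{y}$ is not a local weakly minimal solution, there is a sequence $x^{k}\to\bar{y}$ with $H(x^{k})\prec^{\ell}H(\bar{y})$. By \eqref{order_relation} this means $H(\bar{y})\subseteq H(x^{k})+\mathrm{int}(C)$. In particular, for each $i\in[\omega(\bar{y})]$ there is an index $j^{k}_{i}\in[p]$ with $h^{j^{k}_{i}}(x^{k})\prec_{C}u^{\bar{y}}_{i}$; this value is necessarily finite. Since $[p]$ is finite, we pass to a subsequence along which $j^{k}_{i}=j_{i}$ for all $k$ and all $i$. The resulting tuple is $a:=(j_1,\ldots,j_{\omega(\bar{y})})$.

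\emph{Step 2: show $a\in P_{\bar{y}}$.} This is the main point, and the lower semicontinuity assumption is exactly what makes it work. For each $z\in C^{*}$ we have $z^{\top}h^{j_i}(x^{k})\le z^{\top}u^{\bar{y}}_i$. Taking $\liminf$ and using lower semicontinuity gives
\[
z^{\top}h^{j_i}(\bar{y})\le z^{\top}u^{\bar{y}}_i \quad \text{for all } z\in C^{*}.
\]
Because $C^{**}=C$, this says $h^{j_i}(\bar{y})\preceq_{C}u^{\bar{y}}_i$. Now $h^{j_i}(\bar{y})\in H(\bar{y})$ and $u^{\bar{y}}_i\in\mathcal{M}(H(\bar{y}),C)$, so the definition \eqref{minimal_set} of the minimal set forces $h^{j_i}(\bar{y})=u^{\bar{y}}_i$. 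Hence $j_i\in I_{u^{\bar{y}}_i}(\bar{y})$ for every $i$, and by \eqref{pat_set} we get $a\in P_{\bar{y}}$.

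\emph{Step 3: conclude.} Since $\mathrm{int}(\tilde{C})=\mathrm{int}(C)\times\cdots\times\mathrm{int}(C)$, the componentwise relations $h^{j_i}(x^{k})\prec_{C}u^{\bar{y}}_i=h^{j_i}(\bar{y})$ give $\tilde{h}^{a}(x^{k})\prec_{\tilde{C}}\tilde{h}^{a}(\bar{y})$ for all $k$ in the subsequence. As $x^{k}\to\bar{y}$, every neighborhood $N_{\epsilon}(\bar{y})$ contains such a point $x^{k}$. Therefore $\bar{y}$ is not a local weakly efficient solution of \eqref{vector_opt_family} for this $a\in P_{\bar{y}}$, which is the contrapositive of the statement.
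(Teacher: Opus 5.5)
Your argument is correct and is essentially the paper's proof: you take a sequence $x^k\to\bar y$ with $H(x^k)\prec^{\ell}H(\bar y)$ and freeze the dominating indices along a subsequence using finiteness of $[p]$. Lower semicontinuity plus minimality then give $h^{j_i}(\bar y)=u^{\bar y}_i$, so the index tuple lies in $P_{\bar y}$, and $\tilde h^{a}(x^k)\prec_{\tilde C}\tilde h^{a}(\bar y)$ contradicts local weak efficiency. Your scalarized limit step via $z\in C^{*}$ and $C^{**}=C$ is a cleaner rendering of the paper's informal vector $\liminf$; if ``closed'' is read as closedness of the $C$-epigraph, the inequality $h^{j_i}(\bar y)\preceq_C u^{\bar y}_i$ also follows directly, since $(x^k,u^{\bar y}_i)\in\mathrm{epi}_C h^{j_i}$ converges to $(\bar y,u^{\bar y}_i)$.
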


\begin{proof}
Let \( \bar{y} \) be a local weakly efficient solution of the problem \eqref{vector_opt_family} for every \( a \in P_{\bar{y}} \). Assume, on the contrary, that \( \bar{y} \) is not a local weakly minimal solution to \eqref{sop}.
Accordingly, there exists a sequence $\{x^{k}\}$ converging to $\bar{y}$ satisfying 
\[
H\left(x^{k}\right)\prec^{\ell} H(\bar{y}) \quad \text{for all } k\in \mathbb{N}.
\]
Therefore, there exists a subsequence $\{x^{n_k}\}$ of $\{x^{k}\}$ and $i_{j}\in [p]$ such that 
\begin{equation}\label{eq_1_Lemma2.3}
    h^{i_{j}}(x^{n_k})\prec_{C}h^{a_j}(\bar{y}) \quad \text{for all } j\in [\omega(\bar{y})],
\end{equation}
where $a\in P_{\bar{y}}$, because $p$ is finite. Consequently, when $k\to +\infty$, for all $ j\in [\omega(\bar{y})]$ we deduce that
\begin{equation*}
   \underset{k\to +\infty}{\liminf}~h^{i_{j}} (x^{n_k})\preceq_{C}h^{a_j}(\bar{y}) .  
\end{equation*}
Since the function $h^{j},~j\in [p],$ is lower semi-continuous, we get 
\begin{equation*}
      h^{i_{j}} (\bar{y})\preceq_{C}\underset{k\to +\infty}{\liminf}~h^{i_{j}} (x^{n_k})\preceq_{C}h^{a_j}(\bar{y}) \implies h^{i_{j}}(\bar{y})=h^{a_j}(\bar{y}) \quad \text{for every }j\in [\omega(\bar{y})]
\end{equation*}
because $\{h^{a_j}(\bar{y})\}_{j\in [\omega(\bar{y})]}=\mathcal{M}(H(\bar{y}),K)$.\\
Accordingly, $a^{i_j}:=\{i_1, i_2, \ldots, i_{\omega(\bar{y})}\}\in P_{\bar{y}}$ associated with $i_j\in [p]$, $j=1,2,\ldots, \omega(\bar{y})$. Therefore, there exists a partition element $a^{i_j}\in P_{\bar{y}}$ such that
\[
\Tilde{h}^{a^{i_j}}(x^{n_k})\overset{\eqref{eq_1_Lemma2.3}}{\prec_{C}} \Tilde{h}^{a^{i_j}}(\bar{y}) ,
\]
which contradicts the assumption that \( \bar{y} \) is a local weakly efficient solution of the problem \eqref{vector_opt_family} for every \( a \in P_{\bar{y}} \). Hence, the proof is complete.
\end{proof}
\begin{remark}\label{N_S_c_optimal}
  Consider the problems \eqref{sop} and \eqref{vector_opt_family}. Then, $\bar{y}$ is a weakly minimal solution of \eqref{sop} if and only if $\bar{y}$ is a weakly efficient solution to \eqref{vector_opt_family} for all the partition elements $a\in P_{\bar{y}}$.   \\
\end{remark}
Next, we proceed to discuss the stationarity condition of the problem \eqref{sop} and highlight its significance in characterizing solutions. Furthermore, we present an alternative and more practical approach to verify stationarity in the context of the problem \eqref{sop}.\\

\begin{definition}\label{D_Stat}
    We say \(\bar{y}  \) to be stationary point of the problem \eqref{sop} if, for every \( a \in P_{\bar{y}} \), one can find an index \( j \in [\omega(\bar{y})] \) such that for all $v\in \mathbb{R}^{n}$, the directional derivative satisfies
\[
D_v h^{a_j}(\bar{y}) \notin -\mathrm{int}(C).
\]
\end{definition}

\begin{proposition}
    Consider the problem \eqref{sop}. Assume that $\bar{y} $ is a local weakly minimal solution of \eqref{sop}. Then, $\bar{y}$ must be stationary. Conversely, the same equivalence holds if $h^{j}$ is convex for every $j\in I(\bar{y})$.
\end{proposition}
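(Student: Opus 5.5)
We prove the forward implication by contraposition, reducing to the vector problem \eqref{vector_opt_family}, and the converse by combining the convex half of Proposition~\ref{pro_sta_vector} with Remark~\ref{N_S_c_optimal}.

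\emph{Forward direction.} Suppose $\bar{y}$ is not stationary in the sense of Definition~\ref{D_Stat}. Then there is some $a\in P_{\bar{y}}$ such that for every $j\in[\omega(\bar{y})]$ there exists $v^{j}\in\mathbb{R}^{n}$ with $D_{v^{j}}h^{a_j}(\bar{y})\in-\mathrm{int}(C)$. The main obstacle is that these directions may differ with $j$, whereas a single common direction is needed to contradict weak minimality. Two routes seem available:
\begin{enumerate}[(a)]
\item Read the negated condition as the vector stationarity condition \eqref{efficient_stationary} for the stacked map $\Tilde{h}^{a}$ with cone $\Tilde{C}$. This asks for one $v$ with $D_v\Tilde{h}^{a}(\bar{y})\in-\mathrm{int}(\Tilde{C})$, which is what the forward argument actually uses.
\item Alternatively, interpret Definition~\ref{D_Stat} in the intended sense of \cite{steepest2021set_optimization}: for each $a$ there is no common $v$ with $D_vh^{a_j}(\bar{y})\in-\mathrm{int}(C)$ for all $j$.
\end{enumerate}
In either reading we work with a common direction $v$. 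The plan is to make this interpretation explicit and proceed with it.

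Given such a $v$, for small $\alpha>0$ we have $h^{a_j}(\bar{y}+\alpha v)-h^{a_j}(\bar{y})\in-\mathrm{int}(C)$ for every $j$. This uses that $\mathrm{int}(C)$ is open, exactly as in the first part of Proposition~\ref{pro_sta_vector}, and that $[\omega(\bar{y})]$ is finite, so the smallest admissible $\alpha$ can be taken. Hence $\Tilde{h}^{a}(\bar{y}+\alpha v)\prec_{\Tilde{C}}\Tilde{h}^{a}(\bar{y})$. By the observation in item (ii) preceding Lemma~\ref{Lemma_set_to_vec}, this gives
\[
H(\bar{y}+\alpha v)\prec^{\ell}H(\bar{y}).
\]
The justification is the domination rule \eqref{Domination_rule}: $H(\bar{y})\subseteq\mathcal{M}(H(\bar{y}),C)+C$ and $\mathcal{M}(H(\bar{y}),C)=\{h^{a_j}(\bar{y})\}_j$. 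Since $\alpha$ can be taken arbitrarily small, $\bar{y}$ is not a local weakly minimal solution, a contradiction.

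\emph{Converse.} Assume $\bar{y}$ is stationary and $h^{j}$ is $C$-convex for every $j\in I(\bar{y})$. For each $a\in P_{\bar{y}}$ all indices $a_j$ lie in $I(\bar{y})$, so $\Tilde{h}^{a}$ is $\Tilde{C}$-convex. Stationarity, read as above, says that no $v$ satisfies $D_v\Tilde{h}^{a}(\bar{y})\in-\mathrm{int}(\Tilde{C})$. The convex half of Proposition~\ref{pro_sta_vector}, applied to $\Tilde{h}^{a}$ and $\Tilde{C}$, then gives that $\bar{y}$ is a weakly efficient point of \eqref{vector_opt_family}. Concretely: if $\Tilde{h}^{a}(\hat{y})\prec_{\Tilde{C}}\Tilde{h}^{a}(\bar{y})$, then \eqref{convex_direction_deri} yields $D_{\hat{y}-\bar{y}}\Tilde{h}^{a}(\bar{y})\in-\mathrm{int}(\Tilde{C})$.

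This holds for every $a\in P_{\bar{y}}$, so Lemma~\ref{Lemma_set_to_vec}, or Remark~\ref{N_S_c_optimal} for the global version, gives that $\bar{y}$ is (locally) weakly minimal. Two caveats apply:
\begin{itemize}
\item Lemma~\ref{Lemma_set_to_vec} uses lower semicontinuity of the $h^{j}$. This holds because each $h^{j}$ is the sum of a continuous $f^{j}$ and a closed $g^{j}$.
\item Convexity is assumed only on $I(\bar{y})$. Lemma~\ref{Lemma_set_to_vec} shows that the limiting indices $i_j$ satisfy $h^{i_j}(\bar{y})=h^{a_j}(\bar{y})$ and so lie in $I(\bar{y})$; this is why convexity on $I(\bar{y})$ suffices.
\end{itemize}
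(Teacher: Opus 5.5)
Under the common-direction reading of Definition~\ref{D_Stat} that you adopt, your argument is correct and follows essentially the paper's route. The forward direction is the descent argument of Proposition~\ref{pro_sta_vector} applied to the stacked map $\Tilde{h}^{a}$, together with the domination rule. The converse is the convex half of Proposition~\ref{pro_sta_vector} followed by Lemma~\ref{Lemma_set_to_vec}; the paper runs it by contraposition, but it is the same argument.

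\textbf{The reinterpretation is forced.} The paper's forward proof obtains $D_v\Tilde{h}^{a}(\bar{y})\notin-\mathrm{int}(\Tilde{C})$ for all $v$, so for each $v$ some $j$ works. It then asserts that one $j$ works for all $v$, which is an invalid quantifier swap. The step before that also cites Lemma~\ref{Lemma_set_to_vec} in the wrong direction; what is needed is the ``only if'' observation in item (ii), which is what you use.

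With the literal definition the forward implication is false. Take $n=1$, $m=2$, $C=\mathbb{R}^{2}_{+}$, $g^{1}=g^{2}=0$, $f^{1}(x)=(x,x)^{\top}$, $f^{2}(x)=(1-x,-1-x)^{\top}$, and $\bar{y}=0$.
\begin{itemize}
\item Both values $(0,0)^{\top}$ and $(1,-1)^{\top}$ are minimal, so $P_{0}=\{(1,2)\}$.
\item $v=-1$ gives $D_vh^{1}(0)\in-\mathrm{int}(C)$, and $v=1$ gives $D_vh^{2}(0)\in-\mathrm{int}(C)$. Hence $0$ is not stationary in the literal sense.
\item Yet a direct check shows that no $x$ with $0<|x|<1$ satisfies $H(x)\prec^{\ell}H(0)$, so $0$ is locally weakly minimal.
\end{itemize}
Stating the common-direction reading explicitly, as you plan, is therefore the right fix. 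The converse still goes through unchanged under it, because the paper's converse argument produces a single common $\bar{v}$.

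\textbf{Drop the appeal to Remark~\ref{N_S_c_optimal}.} Your parenthetical use of it for a global conclusion fails, because the ``if'' part of that remark is false globally. In the paper's own example \eqref{example_convex}, $\hat{y}=2.5$ minimizes $g^{1}$. So it is weakly efficient for the only vector problem attached to $P_{\hat{y}}=\{(1)\}$, but $H(1)\prec^{\ell}H(2.5)$. Only the local conclusion of Lemma~\ref{Lemma_set_to_vec} is available, and that is all the statement requires.
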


\begin{proof}
    Assume that $\bar{y}$ is a local weakly minimal solution of the problem \eqref{sop}. Therefore, by Lemma \ref{Lemma_set_to_vec}, for all $a\in P_{\bar{y}}$, $\bar{y}$ is a local weakly efficient solution of \eqref{vector_opt_family}. From Proposition \ref{pro_sta_vector}, we accordingly obtain 
    $$\forall a \in P_{\bar{y}},~v\in \mathbb{R}^{n}: \quad D_{v}\Tilde{h}^{a}(\bar{y}) \notin -\mathrm{int}(\Tilde{K}),$$ 
    where $D_{v}\Tilde{h}^{a}(\bar{y}):=\left[D_{v}h^{a_1}(\bar{y}), D_{v}h^{a_2}(\bar{y}), \ldots, D_{v}h^{a_{\omega(\bar{y})}}(\bar{y})\right]$ and $\Tilde{K}:=K\times K \times \cdots \times K$ ($\omega(\bar{y})$ times).

    \noindent 
    Therefore, for all $ a\in P_{\bar{y}}$, there exists an index $j\in [\omega(\bar{y})]$ such that
    $$ D_{v}h^{a_j}\notin -\mathrm{int}(C) \quad \text{for all }v\in \mathbb{R}^{n}.$$ It follows that $\bar{y}$ is a stationary point to \eqref{sop}.  \\

    \noindent
Now, assume that the functions $h^{j}$ in \eqref{sop} are convex for all $j \in I(\bar{y})$, and suppose that $\bar{y}$ is not a local weakly minimal solution to \eqref{sop}.
Then, there exists an $\bar{a}\in P_{\bar{y}}$ such that the point $\bar{y}$ is not local weakly efficient solution to \eqref{vector_opt_family}. \\
    Note that $h^{j}$ is convex. Thus, from Proposition \ref{pro_sta_vector}, $\bar{y}$ is not a stationary point to \eqref{vector_opt_family}.
    Accordingly, there exists $\bar{v}\in \mathbb{R}^{n}$ such that $D_{\bar{v}}\Tilde{h}^{\bar{a}}(\bar{y})\in -\mathrm{int}(\Tilde{K})$, and hence there exists a partition element $\bar{a}\in P_{\bar{y}}$ such that
$$ D_{\bar{v}} h^{\bar{a}_j}(\bar{y}) \in -\mathrm{int}(C) \quad \text{for every } j\in [\omega(\bar{y})].$$ 

          \noindent
      Therefore, $\bar{y}$ is not a stationary point to \eqref{sop}.
\end{proof}



To capture stationary points of \eqref{sop}, we introduce some specific functions, which serve as a tool to facilitate the identification of stationary points.\\

For a given $l > 0$ and a point $\bar{y}\in \mathrm{dom}(H) $, consider a function 
\(\varphi_{l}(\cdot, \cdot; \bar{y}): P_{\bar{y}} \times \mathbb{R}^{n} \to \mathbb{R}\cup \{+\infty\}\) as
\begin{equation}\label{var_phi}
    \varphi_{l}(a, v; \bar{y}) := \max_{j \in [\omega(\bar{y})]}~ \phi\left( \nabla f^{a_j}(\bar{y})^{\top} v + g^{a_j}(\bar{y}+v) - g^{a_j}(\bar{y}) \right) + \frac{l}{2} \|v\|^2.
\end{equation}

\noindent
Note that for each $a \in P_{\bar{y}}$, the function $\varphi_{l}(v; a, \bar{y})$ is strongly convex in $v$. Consequently, $\varphi_{l}(v; a, \bar{y})$ attains a unique minimizer over $\mathbb{R}^{n}$. Moreover, since $\varphi_{l}(0; a, \bar{y}) = 0$, we get $\underset{v\in \mathbb{R}^{n}}{\min}\,\varphi_{l}(v; a, \bar{y}) \leq 0$ and
\[
 \min_{v \in \mathbb{R}^{n}} \varphi_{l}(v; a, \bar{y}) = 0 \iff v = 0
\]
\noindent
for any given point $\bar{y}$ and $a\in P_{\bar{y}}$. Note that the set $P_{\bar{y}}$ of partition elements is finite. Hence, the function $\varphi_{l}(a, v; \bar{y})$ also attains its minimum over the set $P_{\bar{y}} \times \mathbb{R}^{n}$. As a result, $\underset{(a, v) \in P_{\bar{y}} \times \mathbb{R}^{n}}{\min}\,\varphi_{l}(a,v; \bar{y}) \leq 0$ and
\begin{align*}
 \min_{(a, v) \in P_{\bar{y}} \times \mathbb{R}^{n}} \varphi_{l}(a,v; \bar{y}) = 0 \iff v = 0. 
\end{align*}
At any given point $\bar{y}$, the expression 
\( \underset{(a, v) \in P_{\bar{y}} \times \mathbb{R}^{n}}{\min} \varphi_{l}(a,v; \bar{y})\)
provides a unique numerical value associated with $\bar{y}$. This motivates us to define a function that maps each given point $\bar{y}\in \mathrm{dom}(H)$ to the corresponding minimum value. Define a function $\Theta_{l}:P_{\bar{y}}\times\mathbb{R}^{n} \to \mathbb{R}$ by
\begin{equation}\label{opt_val}
    \Theta_{l}(\bar{y}) := \min_{(a, v) \in P_{\bar{y}} \times \mathbb{R}^{n}}  \varphi_{l}(a,v; \bar{y}) .
\end{equation}
We denote
\begin{equation}\label{opt_sol}
    \left(a^{l}(\bar{y}), v_{l}(\bar{y})\right) \in \underset{(a, v) \in P_{\bar{y}} \times \mathbb{R}^{n}}{\arg\min}  \varphi_{l}(a, v; \bar{y}) .
\end{equation}
Accordingly, we have
\(\Theta_{l}(\bar{y}) = \varphi_{l}\left(a^{l}(\bar{y}), v_{l}(\bar{y}); \bar{y}\right)\),
and hence the following equivalences hold:
\begin{equation}\label{theta_e_0}
 v_{l}(\bar{y}) = 0   \iff  \Theta_{l}(\bar{y}) = 0 \quad \text{and}\quad  v_{l}(\bar{y}) \neq 0   \iff  \Theta_{l}(\bar{y}) < 0.
\end{equation}

For convenience, the following notational conventions will be adopted throughout the paper.

\begin{framed}
\noindent
   \begin{itemize}
       \item Given a reference point $\bar{y} \in \mathrm{dom}(H)$, we denote by $\bar{\omega}$ the cardinality $\omega(\bar{y})$ and by $\bar{P}$ the partition set $P_{\bar{y}}$. An optimal solution $(a^{l}(\bar{y}), v_{l}(\bar{y}))$ of the problem \eqref{opt_sol} is abbreviated as $(\bar{a}^{l}, \bar{v}_{l})$.\\
       \item Similarly, for an iterative point $x^{k} \in \mathrm{dom}(H)$, we abbreviate $\omega\left(x^{k}\right)$ and $P_{x^{k}}$ as $\omega_{k}$ and $P_{k}$, respectively. An optimal solution $\left(a^{l}\left(x^{k}\right), v_{l}\left(x^{k}\right)\right)$ of \eqref{opt_sol} is denoted by $\left(a^{l}_{k}, v^{k}_{l}\right)$.

   \end{itemize}
 
\end{framed}

In the following, we present some equivalent conditions for the stationarity of the problem~\eqref{sop}.

\begin{theorem}\label{equ_stationary}
   Let $\bar{y}\in \mathrm{dom}(H)$ be a given point. Consider the functions $\Theta_{l}$ and $v_{l}$ as defined in \eqref{opt_val} and \eqref{opt_sol}, respectively. Then, the following relations are equivalent:
    \begin{enumerate}[(i)]
        \item $\bar{y}$ is a stationary point of \eqref{sop}.
        \item $\bar{v}_{l}=0$ and $\Theta_{l}(\bar{y})=0.$
    \end{enumerate}   
\end{theorem}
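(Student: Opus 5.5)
Since \eqref{theta_e_0} already gives $\bar{v}_{l}=0 \iff \Theta_{l}(\bar{y})=0$, it suffices to show that $\bar{y}$ is stationary if and only if $\Theta_{l}(\bar{y})=0$. Both directions go through the contrapositive, using the fact that $\phi$ represents the cone (Lemma \ref{scal_func}(iv)), is positively homogeneous, and is $1$-Lipschitz.

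\emph{(i)$\Rightarrow$(ii).} Suppose $\Theta_{l}(\bar{y})<0$, so $\bar v:=\bar v_{l}\neq 0$, and put $a:=\bar a^{l}$. Then for every $j\in[\bar\omega]$,
\[
\phi\left(\nabla f^{a_j}(\bar{y})^{\top}\bar v+g^{a_j}(\bar{y}+\bar v)-g^{a_j}(\bar{y})\right)\le \Theta_{l}(\bar{y})-\tfrac{l}{2}\|\bar v\|^2<0 .
\]
For $\alpha\in(0,1]$, monotonicity of the difference quotient \eqref{convex_direction_deri} together with the monotonicity and sublinearity of $\phi$ gives
\[
\phi\Big(\tfrac{1}{\alpha}\big(h^{a_j}(\bar{y}+\alpha\bar v)-h^{a_j}(\bar{y})\big)\Big)\le \phi\Big(\tfrac{1}{\alpha}\big(f^{a_j}(\bar{y}+\alpha\bar v)-f^{a_j}(\bar{y})\big)-\nabla f^{a_j}(\bar{y})^{\top}\bar v\Big)+\phi\big(\nabla f^{a_j}(\bar{y})^{\top}\bar v+g^{a_j}(\bar{y}+\bar v)-g^{a_j}(\bar{y})\big).
\]
As $\alpha\downarrow0$, the first term tends to $0$ by differentiability of $f^{a_j}$ and the Lipschitz property of $\phi$. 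Here $D_{v}$ is read as the one-sided limit, which is the meaningful notion for convex $g$. Hence $\phi(D_{\bar v}h^{a_j}(\bar{y}))<0$, that is, $D_{\bar v}h^{a_j}(\bar{y})\in-\mathrm{int}(C)$ for every $j$. So $\bar{y}$ fails stationarity at the partition element $a$.

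\emph{(ii)$\Rightarrow$(i).} Suppose $\bar{y}$ is not stationary. Then there is $a\in\bar P$ such that for each $j$ some $v^{j}$ satisfies $D_{v^j}h^{a_j}(\bar{y})\in-\mathrm{int}(C)$. This is the main obstacle, because the definition lets the direction depend on $j$, whereas $\varphi_{l}$ requires a common $v$.

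To obtain a common direction, I would read Definition \ref{D_Stat} in the sense derived from the vector problem in the preceding proposition: non-stationarity means there exist $a$ and a single $v$ with $D_{v}\tilde h^{a}(\bar{y})\in-\mathrm{int}(\tilde C)$. That proposition's proof handles the quantifiers this way, so this reading is consistent with the paper.

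With a common $v$, set
\[
\psi_j(t):=\nabla f^{a_j}(\bar{y})^{\top}(tv)+g^{a_j}(\bar{y}+tv)-g^{a_j}(\bar{y}).
\]
Then $\psi_j(t)/t\to D_{v}h^{a_j}(\bar{y})$ as $t\downarrow0$, because the directional derivative of the convex part is the limit of its difference quotients. Since $-\mathrm{int}(C)$ is open, there is $\delta>0$ with $\phi(\psi_j(t)/t)\le-\delta$ for all $j$ and all small $t$. Positive homogeneity then gives
\[
\varphi_{l}(a,tv;\bar{y})\le -t\delta+\tfrac{l}{2}t^2\|v\|^2<0
\]
for small $t>0$. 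Therefore $\Theta_{l}(\bar{y})<0$, and by \eqref{theta_e_0} also $\bar v_{l}\neq0$, contradicting (ii).
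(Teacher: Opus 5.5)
Your proposal is correct and follows essentially the paper's route. You reduce to $\Theta_{l}(\bar{y})=0$ via \eqref{theta_e_0}, and in each direction you pass between $\frac{1}{\alpha}\varphi_{l}(a,\alpha v;\bar{y})$ and the one-sided directional derivatives using convexity of the $g^{a_j}$; the paper applies convexity to the aggregated map $v\mapsto\max_{j}\phi(\cdot)$ rather than to each $g^{a_j}$ separately, which is a cosmetic difference.

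The quantifier issue you flag is genuine, and your common-direction reading is the right fix. The paper's (ii)$\Rightarrow$(i) only obtains $\max_{j\in[\bar{\omega}]}\phi\bigl(D_v h^{a_j}(\bar{y})\bigr)\ge 0$, i.e.\ an index depending on $v$, yet asserts the conclusion for all $j$. Under the literal quantifier order in Definition~\ref{D_Stat} that implication is in fact false: take $n=1$, $C=\mathbb{R}^{2}_{+}$, $g^{j}=0$, $f^{1}(x)=(x,1+x)^{\top}$, $f^{2}(x)=(1-x,-x)^{\top}$ and $\bar{y}=0$. Then $\Theta_{l}(0)=0$, although each $h^{j}$ has some $v$ with $D_v h^{j}(0)\in-\mathrm{int}(C)$.
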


  \begin{proof}
      $(i)\implies (ii)$ We prove by the method of contradiction. Suppose that $\bar{y}$ is a stationary point of the problem~\eqref{sop} and $\Theta_{l}(\bar{y}) \neq 0$.
 Therefore, $\Theta_{l}(\bar{y})< 0$ and $\bar{v}_{l}\neq 0$ by the relation \eqref{theta_e_0}. From the definition of $\Theta_{l}$, it follows that
      \[\underset{j \in [\bar{\omega}]}{\max}\, \phi\left( \nabla f^{a_j}(\bar{y})^{\top} \bar{v}_{l} + g^{a_j}(\bar{y}+\bar{v}_{l}) - g^{a_j}(\bar{y}) \right)<0.\]
      Note that $\alpha\mapsto \underset{j \in [\bar{\omega}]}{\max}\, \phi\left( \nabla f^{a_j}(\bar{y})^{\top} v + g^{a_j}(\bar{y}+v) - g^{a_j}(\bar{y}) \right)$ is a convex function and gets value zero at $v=0$. Therefore, for all $\alpha \in [0,1]$, we have
      \begin{align*}
         & \alpha\, \underset{j \in [\bar{\omega}]}{\max}\, \phi\left( \nabla f^{a_j}(\bar{y})^{\top} \bar{v}_{l} + g^{a_j}(\bar{y}+\bar{v}_{l}) - g^{a_j}(\bar{y}) \right) + (1-\alpha)0 \\
          \geq & ~\underset{j\in [\bar{\omega}]}{\max}\,\phi\left(\alpha\nabla f^{\bar{a}^{l}_j}(\bar{y})^{\top}\bar{v}_{l}+ g^{\bar{a}^{l}_j}\left(\bar{y}+\alpha \bar{v}_{l}\right)-g^{\bar{a}^{l}_j}(\bar{y})\right).
      \end{align*}
      Hence,
      \begin{align*}
          & \underset{j \in [\bar{\omega}]}{\max}\, \phi\left( \nabla f^{a_j}(\bar{y})^{\top} \bar{v}_{l} + g^{a_j}(\bar{y}+\bar{v}_{l}) - g^{a_j}(\bar{y}) \right) \\
          =&~\underset{\alpha\to 0_{+}}{\lim}\tfrac{1}{\alpha}\left(\underset{j \in [\bar{\omega}]}{\max}\, \phi\left( \nabla f^{a_j}(\bar{y})^{\top} \bar{v}_{l} + g^{a_j}(\bar{y}+\bar{v}_{l}) - g^{a_j}(\bar{y}) \right)\right)\\
          \geq &~\underset{j\in [\bar{\omega}]}{\max}\,\phi\left(D_{\bar{v}_{l}}h^{\bar{a}^{l}_j}(\bar{y})\right).
      \end{align*}
     Consequently, for all $j\in [\bar{\omega}]$, $$D_{\bar{v}_{l}}h^{\bar{a}^{l}_j}(\bar{y})\in -\mathrm{int}(C).$$ Thus, $\bar{y}$ must be a non-stationary point of the problem~\eqref{sop}, which contradicts our assumption.
\\

        \noindent
        $(ii)\implies (i)$ From the relation \eqref{theta_e_0}, it follows that the conditions $\Theta_{l}(\bar{y}) = 0$ and $v_{l}(\bar{y}) = 0$ are equivalent. Hence, we may equivalently impose either of these conditions. Assume the case when $\Theta_{l}(\bar{y}) = 0$. Then, by the definition of $\Theta_{l}$ in \eqref{opt_val}, we have
        \[\varphi_{l}(a,z;\bar{y})\geq \Theta_{l}(\bar{y})=0 \quad \text{for all } z\in \mathbb{R}^{n}.\]
     Then, for all $a\in P_{\bar{y}}$ and $z\in \mathbb{R}^{n}$, we get
        \begin{align*}
           &\underset{j\in [\bar{\omega}]}{\max}\,\phi\left(\alpha~\nabla f^{a_j}(\bar{y})^{\top}v+ g^{a_j}\left(\bar{y}+ \alpha\,v\right)-g^{a_j}(\bar{y})\right)+\frac{\alpha^{2} l}{2}\|v\|^{2} \geq 0\\
        \implies & \underset{j\in [\bar{\omega}]}{\max}\,\phi\left(\nabla f^{a_j}(\bar{y})^{\top}v+ \frac{1}{\alpha}\left\{g^{a_j}\left(\bar{y}+ \alpha\,v\right)-g^{a_j}(\bar{y})\right\}\right)+\frac{\alpha l}{2}\|v\|^{2} \geq 0\\ 
        \implies & \underset{j\in [\bar{\omega}]}{\max}\,\phi\left(D_{v}h^{a_j}(\bar{y})\right)\geq 0 \quad \text{as }\alpha \to 0.
        \end{align*}
        Accordingly, for all $a\in P_{\bar{y}},~j\in[\bar{\omega}]$ and $v\in \mathbb{R}^{n}$, we get
        \[D_{v}h^{a_j}(\bar{y})\notin -\mathrm{int}(C).\]
        Hence, $\bar{y}$ is a stationary point.  
  \end{proof}
  
  \begin{remark}\label{R_eqv}
      Let $\bar{y}\in \mathrm{dom}(H)$ be a given point. Then, there exists $(\bar{a}^{l},\bar{v}_{l})$ as given in \eqref{opt_sol}. Furthermore, if $\bar{y}$ is a non-stationary point of the problem \eqref{sop}, it follows that $\bar{v}_{l} \neq 0$ and $\Theta_{l}(\bar{y}) < 0$,
      \begin{equation}\label{R_eqv_I}
       \text{i.e.,} \quad \underset{j\in [\bar{\omega}]}{\max} \phi\left(\nabla f^{\bar{a}^{l}_{j}}(\bar{y})^{\top} \bar{v}_{l}+g^{\bar{a}^{l}_{j}}\left(\bar{y}+\bar{v}_{l}\right)-g^{\bar{a}^{l}_{j}}(\bar{y})\right)+  \frac{l}{2}\left\|\bar{v}_{l}\right\|^{2}<0.   
      \end{equation}
      \[ \]
  \end{remark}
  
Finally, this section is concluded by discussing the notion of a descent direction for the problem \eqref{sop}. Let $\bar{y}\in \mathrm{dom}(H)$ be a non-stationary point of \eqref{sop}. By Remark \ref{R_eqv}, $\Theta_{l}(\bar{y})<0$. Therefore,
there exists a $d \in \mathbb{R}^{n}$ such that for every $j \in [\bar{\omega}]$, we have
\[
\nabla f^{\bar{a}^{l}_{j}}(\bar{y})^{\top} d + g^{\bar{a}^{l}_{j}}(\bar{y} + d) - g^{\bar{a}^{l}_{j}}(\bar{y}) \in -\mathrm{int}(C).
\]
Therefore, for any $\alpha \in (0,1]$ and for all $j \in [\bar{\omega}]$, it follows that
\[
\alpha \nabla f^{\bar{a}^{l}_{j}}(\bar{y})^{\top} d + \alpha \left( g^{\bar{a}^{l}_{j}}(\bar{y} + d) - g^{\bar{a}^{l}_{j}}(\bar{y}) \right) \in -\mathrm{int}(C).
\]
Moreover, from relation \eqref{convex_direction_deri}, we obtain the following for all $j\in [\bar{\omega}]$:
\[
\alpha \nabla f^{\bar{a}^{l}_{j}}(\bar{y})^{\top} d + g^{\bar{a}^{l}_{j}}(\bar{y} + \alpha d) - g^{\bar{a}^{l}_{j}}(\bar{y}) \in -\mathrm{int}(C).
\]
Thus, for $\bar{\alpha} \in (0,1)$, sufficiently small, the following inequality holds for all $j \in [\bar{\omega}]$:
\begin{align*}
    & f^{\bar{a}^{l}_{j}}(\bar{y} + \bar{\alpha} d) + g^{\bar{a}^{l}_{j}}(\bar{y} + \bar{\alpha} d) \prec_{C} f^{\bar{a}^{l}_{j}}(\bar{y}) + g^{\bar{a}^{l}_{j}}(\bar{y}), \\
    \text{i.e.,} \quad & h^{\bar{a}^{l}_{j}}(\bar{y} + \bar{\alpha} d) \prec_{C} h^{\bar{a}^{l}_{j}}(\bar{y}).
\end{align*}
Consequently,
\[
H(\bar{y} + \bar{\alpha} d) \prec^{\ell} H(\bar{y}).
\]
This shows that, for every neighborhood of $\bar{y}$, there exists a point $\hat{x} = \bar{y} + \bar{\alpha} d$ satisfying $H(\hat{x}) \prec^{\ell} H(\bar{y})$.\\

Next, we formally introduce the notion of a $C$-descent direction for the problem \eqref{sop}.\\

\begin{definition}\label{C-desent_def}
Let $\bar{y}\in \mathrm{dom}(H)$ be a non-stationary point of \eqref{sop}. A vector $d \in \mathbb{R}^{n}$ is called a $C$-descent direction of \eqref{sop} at $\bar{y}$ if
\begin{equation}\label{c_descent_direction}
   \underset{j\in [\bar{\omega}]}{\max}\,\phi\left(\nabla f^{\bar{a}^{l}_{j}}(\bar{y})^{\top} d + g^{\bar{a}^{l}_{j}}(\bar{y} + d) - g^{\bar{a}^{l}_{j}}(\bar{y})\right)<0. 
\end{equation}
\end{definition}

\noindent
Observe that the relation \eqref{c_descent_direction} holds at $d=\bar{v}_{l}$. Therefore, it follows that $\bar{v}_{l}$ is a $C$-descent direction of \eqref{sop} at the point $\bar{y}\in \mathrm{dom}(H)$.

\section{Proximal gradient method}\label{sec.4}
In this section, we introduce proximal gradient methods, both with line search and without line search, for computing stationary points of the problem \eqref{sop}. For the variant without line search, we impose an additional assumption that the mapping $\nabla g^{j}$ is Lipschitz continuous for every $j \in [p]$.

\subsection{Proximal gradient method with line search}
In this method, we choose an $l >0$ for the subproblem \eqref{opt_val}. Upon solving \eqref{opt_val} at a given non-stationary point $\bar{y}\in \mathrm{dom}(H)$, a descent direction $\bar{v}_{l}$ and a corresponding partition element $\bar{a}^{l}$ are obtained. To identify a better candidate point in the sense of sufficient descent, a line search strategy is employed to determine an acceptable step-size along the direction $\bar{v}_{l}$ from $\bar{y}$. Various line search strategies, such as those based on the Armijo or Wolfe conditions (see~\cite{steepest2021set_optimization,Ghosh CGM}), can be employed for the problem~\eqref{sop}. In this work, for simplicity, we adopt a line search procedure based on the Armijo condition.

We now formalize the Armijo-type line search condition. Let $\rho \in (0,1)$ and $\alpha$ be a positive scalar. Then, a step-size $\alpha$ is said to satisfy the Armijo line search condition at $\bar{y}\in \mathrm{dom}(H)$ in the direction of $\bar{v}_{l}$ to \eqref{sop} if the following inequality holds:
\begin{equation}\label{Armijo_LS}
    h^{\bar{a}^{l}_j}(\bar{y} + \alpha \bar{v}_{l}) \preceq_{C} h^{\bar{a}^{l}_j}(\bar{y}) + \rho \,\alpha \left( \nabla f^{\bar{a}^{l}_j}(\bar{y})^{\top} \bar{v} + g^{\bar{a}^{l}_j}(\bar{y} + \bar{v}_{l}) - g^{\bar{a}^{l}_j}(\bar{y}) \right) \quad \text{for all } j \in [\bar{\omega}].
\end{equation}

Below, we show that if a point $\bar{y}\in \mathrm{dom}(H)$ is non-stationary of \eqref{sop}, then there exists a non-singleton interval $\bar{I}\subseteq(0,1]$ such that the Armijo line search condition~\eqref{Armijo_LS} at $\bar{y}$ in the direction $\bar{v}_{l}$ is satisfied for all $\alpha \in \bar{I}$.
Furthermore, under this condition, for every $\alpha \in \bar{I}$, the point $\bar{y} + \alpha \bar{v}_{l}$ serves as an improved iterate, in the sense that $H(\bar{y} + \alpha \bar{v}_{l})$ strictly dominates $H(\bar{y})$.\\

\begin{theorem}\label{LS_Theorem}
    Consider the problem \eqref{sop}. Assume that $\bar{y}\in \mathrm{dom}(H)$ is a non-stationary point and let $\rho \in (0,1)$. Then, there exists a non-singleton interval $\bar{I}\subseteq (0,1]$ such that for all $\alpha\in \bar{I}$, the following relations hold:
    \begin{enumerate}[(i)]
        \item $h^{\bar{a}^{l}_{j}}(\bar{y}+\alpha \bar{v}_{l})\preceq_{C} h^{\bar{a}^{l}_{j}}(\bar{y})+\rho \,\alpha\left(\nabla f^{\bar{a}^{l}_{j}}(\bar{y})^{\top} \bar{v}_{l}+g^{\bar{a}^{l}_j}(\bar{y}+\bar{v}_{l})-g^{\bar{a}^{l}_j}(\bar{y})\right)$ for all $j\in [\bar{\omega}]$.
        \item  $H(\bar{y}+\alpha \bar{v}_{l})\prec^{{\ell}}H(\bar{y})$.
    \end{enumerate}
\end{theorem}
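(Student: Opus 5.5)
Writing $\bar a:=\bar a^{l}$, $\bar v:=\bar v_{l}$ and
\[
\Delta_j:=\nabla f^{\bar a_j}(\bar y)^{\top}\bar v+g^{\bar a_j}(\bar y+\bar v)-g^{\bar a_j}(\bar y),\qquad j\in[\bar\omega],
\]
I would show that (i) holds for every $\alpha\in(0,\bar\alpha]$ for some $\bar\alpha\in(0,1]$, so that $\bar I:=(0,\bar\alpha]$ is a non-singleton interval, and then deduce (ii) from (i). By Remark~\ref{R_eqv} we have $\max_j\phi(\Delta_j)\le -\tfrac{l}{2}\|\bar v\|^2<0$. Lemma~\ref{scal_func}(iv) then gives $\Delta_j\in-\mathrm{int}(C)$ for every $j\in[\bar\omega]$.

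For (i), fix $j$ and split the Armijo residual
\[
R_j(\alpha):=h^{\bar a_j}(\bar y+\alpha\bar v)-h^{\bar a_j}(\bar y)-\rho\alpha\Delta_j
\]
into a smooth part and a nonsmooth part. The smooth part is $f^{\bar a_j}(\bar y+\alpha\bar v)-f^{\bar a_j}(\bar y)-\alpha\nabla f^{\bar a_j}(\bar y)^{\top}\bar v=o(\alpha)$, since $f^{\bar a_j}$ is continuously differentiable. For the nonsmooth part, the convexity estimate \eqref{convex_direction_deri} gives
\[
g^{\bar a_j}(\bar y+\alpha\bar v)-g^{\bar a_j}(\bar y)\preceq_C \alpha\bigl(g^{\bar a_j}(\bar y+\bar v)-g^{\bar a_j}(\bar y)\bigr),\qquad \alpha\in(0,1].
\]
Adding the two parts yields
\[
R_j(\alpha)\preceq_C (1-\rho)\alpha\Delta_j+o(\alpha),
\]
that is, $R_j(\alpha)/\alpha\preceq_C(1-\rho)\Delta_j+o(\alpha)/\alpha$. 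Because $(1-\rho)\Delta_j\in-\mathrm{int}(C)$ and $-\mathrm{int}(C)$ is open, there is $\bar\alpha_j\in(0,1]$ with $(1-\rho)\Delta_j+o(\alpha)/\alpha\in-\mathrm{int}(C)\subseteq -C$ for all $\alpha\in(0,\bar\alpha_j]$. Since $C$ is a convex cone, $R_j(\alpha)\in -C$ on this range. One could equally argue with $\phi$, using its sublinearity, monotonicity and Lipschitz property from Lemma~\ref{scal_func}. Set $\bar\alpha:=\min_{j}\bar\alpha_j>0$; the minimum is over finitely many indices because $\bar\omega\le p$.

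For (ii), (i) gives, for $\alpha\in\bar I$ and each $j$,
\[
h^{\bar a_j}(\bar y+\alpha\bar v)\preceq_C h^{\bar a_j}(\bar y)+\rho\alpha\Delta_j\prec_C h^{\bar a_j}(\bar y),
\]
where the strict step uses $\rho\alpha\Delta_j\in-\mathrm{int}(C)$. Combining $C+\mathrm{int}(C)\subseteq\mathrm{int}(C)$ then gives $h^{\bar a_j}(\bar y)\in h^{\bar a_j}(\bar y+\alpha\bar v)+\mathrm{int}(C)$ for every $j\in[\bar\omega]$. By definition of the partition set, $\{h^{\bar a_j}(\bar y)\}_{j\in[\bar\omega]}=\mathcal M(H(\bar y),C)$. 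The domination rule \eqref{Domination_rule} gives $H(\bar y)\subseteq\mathcal M(H(\bar y),C)+C$. Using $\mathrm{int}(C)+C\subseteq\mathrm{int}(C)$ again, every element of $H(\bar y)$ lies in $H(\bar y+\alpha\bar v)+\mathrm{int}(C)$, which is (ii).

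The main obstacle is bookkeeping rather than depth. First, one must verify that $\bar y+\alpha\bar v\in\mathrm{dom}(H)$, so that the $g$-terms are finite. This follows from the convexity inequality above together with finiteness of $g^{\bar a_j}(\bar y+\bar v)$, which holds because $\Theta_l(\bar y)<0$ is finite. For indices outside $\bar a$, finiteness of the other $h^i$ on a neighbourhood is not obviously guaranteed. I would sidestep this by noting that (ii) only needs, for each $h^i(\bar y)$, some dominating element of $H(\bar y+\alpha\bar v)$, and such elements are supplied by the $\bar a_j$ components. Second, one must check that the $o(\alpha)$ term is uniform enough to allow a common $\bar\alpha$; finiteness of $[\bar\omega]$ handles this.
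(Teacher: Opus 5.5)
Your proof is correct and follows essentially the same route as the paper. For (i), both arguments combine three ingredients:
\begin{itemize}
\item the convexity estimate \eqref{convex_direction_deri} applied to the $g$-part;
\item differentiability of the $f$-part;
\item the fact that $\Delta_j\in-\mathrm{int}(C)$, which follows from Remark~\ref{R_eqv} and Lemma~\ref{scal_func}(iv).
\end{itemize}
For (ii), both deduce it from (i) using the domination rule and $C+\mathrm{int}(C)=\mathrm{int}(C)$.

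The differences are minor. You argue directly, using an $o(\alpha)$ remainder and the openness of $-\mathrm{int}(C)$. The paper argues by contradiction along a sequence $\alpha_k\to 0$ and passes to the limit. Your remark that $g^{\bar a_j}(\bar y+\alpha\bar v)$ is finite makes explicit a point the paper leaves implicit.
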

\begin{proof}
  $(i)$ This result is established by the method of contradiction. Assume that there does not exist any non-singleton interval $\bar{I} \subseteq (0, 1]$ such that the condition stated in $(i)$ is satisfied.

\noindent
Since $\bar{\omega}$ is finite, there exists an index $\hat{j} \in [\bar{\omega}]$ and a sequence $\{\alpha_{k}\},~\alpha_{k}\in (0,1]$, with $\alpha_{k} \to 0$ as $k \to +\infty$, such that the following relation holds:
  \[h^{\bar{a}^{l}_{\hat{j}}}(\bar{y}+\alpha_{k} \bar{v}_{l})\npreceq_{C} h^{\bar{a}^{l}_{\hat{j}}}(\bar{y})+\rho \,\alpha_{k}\left(\nabla f^{\bar{a}^{l}_{\hat{j}}}(\bar{y})^{\top} \bar{v}_{l}+g^{\bar{a}^{l}_{\hat{j}}}(\bar{y}+\bar{v}_{l})-g^{\bar{a}^{l}_{\hat{j}}}(\bar{y})\right) .\]
 Since $\alpha_{k} > 0$, it follows from the definition of the cone $C$ that
\begin{align*}
  & \frac{1}{\alpha_k}\left\{f^{\bar{a}^{l}_{\hat{j}}}(\bar{y}+\alpha_{k} \bar{v}_{l})-f^{\bar{a}^{l}_{\hat{j}}}(\bar{y})\right\} + \frac{1}{\alpha_k}\left\{g^{\bar{a}^{l}_{\hat{j}}}(\bar{y}+\alpha_{k} \bar{v}_{l})-g^{\bar{a}^{l}_{\hat{j}}}(\bar{y})\right\}\\
    \npreceq_{C} &~ \rho \left(\nabla f^{\bar{a}^{l}_{\hat{j}}}(\bar{y})^{\top} \bar{v}_{l}+g^{\bar{a}^{l}_{\hat{j}}}(\bar{y}+\bar{v}_{l})-g^{\bar{a}^{l}_{\hat{j}}}(\bar{y})\right).
\end{align*}
Note that $g^{j}$ is convex for every $j \in [p]$. Therefore, by applying the relation \eqref{convex_direction_deri} to the left-hand side of the inequality, we deduce that
  \begin{align*}
      &\frac{1}{\alpha_k}\left\{f^{\bar{a}^{l}_{\hat{j}}}(\bar{y}+\alpha_{k} \bar{v}_{l})-f^{\bar{a}^{l}_{\hat{j}}}(\bar{y})\right\} + g^{\bar{a}^{l}_{\hat{j}}}(\bar{y}+\bar{v}_{l})-g^{\bar{a}^{l}_{\hat{j}}}(\bar{y})\\
      \npreceq_{C}&~ \rho \left(\nabla f^{\bar{a}^{l}_{\hat{j}}}(\bar{y})^{\top} \bar{v}_{l}+g^{\bar{a}^{l}_{\hat{j}}}(\bar{y}+\bar{v}_{l})-g^{\bar{a}^{l}_{\hat{j}}}(\bar{y})\right).
      \end{align*}
This implies that
 \[ \frac{1}{\alpha_k}\left\{f^{\bar{a}^{l}_{\hat{j}}}(\bar{y}+\alpha_{k} \bar{v}_{l})-f^{\bar{a}^{l}_{\hat{j}}}(\bar{y})\right\} + (1-\rho)\left\{g^{\bar{a}^{l}_{\hat{j}}}(\bar{y}+ \bar{v}_{l})-g^{\bar{a}^{l}_{\hat{j}}}(\bar{y})\right\}\npreceq_{C} \rho ~\nabla f^{\bar{a}^{l}_{\hat{j}}}(\bar{y})^{\top} \bar{v}_{l}.\]
 As $k \to +\infty$, we have $\alpha_{k} \to 0$, and consequently,
  \begin{align}\label{tho_4.1_1}
      & (1-\rho)\left(\nabla f^{\bar{a}^{l}_{\hat{j}}}(\bar{y})^{\top} \bar{v}_{l}+g^{\bar{a}^{l}_{\hat{j}}}(\bar{y}+\bar{v}_{l})-g^{\bar{a}^{l}_{\hat{j}}}(\bar{y})\right)\notin -\mathrm{int}(C) \nonumber\\
      \text{i.e.,}\quad & \nabla f^{\bar{a}^{l}_{\hat{j}}}(\bar{y})^{\top} \bar{v}_{l}+g^{\bar{a}^{l}_{\hat{j}}}(\bar{y}+\bar{v}_{l})-g^{\bar{a}^{l}_{\hat{j}}}(\bar{y})\notin -\mathrm{int}(C).
  \end{align}
On the other hand, observe that $\bar{y}$ is a non-stationary point of \eqref{sop}. Therefore, by Remark \ref{R_eqv}, we have $\bar{v}_{l} \neq 0$ and $\Theta_{l}(\bar{y}) < 0$. Consequently, it follows from \eqref{R_eqv_I} that
\begin{align*}
    & \max_{j \in [\bar{\omega}]}~ \phi\left( \nabla f^{\bar{a}^{l}_j}(\bar{y})^{\top} \bar{v}_{l} + g^{\bar{a}^{l}_j}\left(\bar{y}+\bar{v}_{l}\right) - g^{\bar{a}^{l}_j}(\bar{y}) \right)+ \frac{l}{2} \left\|\bar{v}_{l}\right\|^2<0\\
  \implies &  \phi\left( \nabla f^{\bar{a}^{l}_j}(\bar{y})^{\top} \bar{v}_{l} + g^{\bar{a}^{l}_j}\left(\bar{y}+\bar{v}_{l}\right) - g^{\bar{a}^{l}_j}(\bar{y}) \right) <0  \quad \text{for all } j\in [\bar{\omega}].
\end{align*}
Therefore, for all $j\in [\bar{\omega}]$, we obtain from Lemma \ref{scal_func}(iv) that
\begin{equation}\label{tho_4.1_2}
    \nabla f^{\bar{a}^{l}_{j}}(\bar{y})^{\top} \bar{v}_{l} + g^{\bar{a}^{l}_{j}}\left(\bar{y}+\bar{v}_{l}\right) - g^{\bar{a}^{l}_{j}}(\bar{y})\in -\mathrm{int}(C),
\end{equation}
which contradicts the relation \eqref{tho_4.1_1}. Hence, for all $j\in [\bar{\omega}]$, there exists a non-singleton interval $\bar{I}\subseteq(0,1]$ such that
\begin{equation}\label{tho_4.1_3}
    h^{\bar{a}^{l}_{j}}(\bar{y}+\alpha \bar{v}_{l})\preceq_{C} h^{\bar{a}^{l}_{j}}(\bar{y})+\rho \,\alpha\left(\nabla f^{\bar{a}^{l}_{j}}(\bar{y})^{\top} \bar{v}_{l}+g^{\bar{a}^{l}_j}(\bar{y}+\bar{v}_{l})-g^{\bar{a}^{l}_j}(\bar{y})\right) \quad \text{for all }\alpha\in \bar{I}.
\end{equation}

\noindent 
$(ii)$ From the relation \eqref{tho_4.1_2}, for all $j\in [\bar{\omega}]$, it follows that
\begin{align*}
  & \nabla f^{\bar{a}^{l}_{j}}(\bar{y})^{\top} \bar{v}_{l} + g^{\bar{a}^{l}_{j}}\left(\bar{y}+\bar{v}_{l}\right) - g^{\bar{a}^{l}_{j}}(\bar{y})\in -\mathrm{int}(C)\\
 \implies & \rho \,\alpha\left(  \nabla f^{\bar{a}^{l}_{j}}(\bar{y})^{\top} \bar{v}_{l} + g^{\bar{a}^{l}_{j}}\left(\bar{y}+\bar{v}_{l}\right) - g^{\bar{a}^{l}_{j}}(\bar{y})\right)\in -\mathrm{int}(C) \\
 \implies & h^{\bar{a}^{l}_j}(\bar{y})+\rho \,\alpha\left(  \nabla f^{\bar{a}^{l}_{j}}(\bar{y})^{\top} \bar{v}_{l} + g^{\bar{a}^{l}_{j}}\left(\bar{y}+\bar{v}_{l}\right) - g^{\bar{a}^{l}_{j}}(\bar{y})\right)\in h^{\bar{a}^{l}_j}(\bar{y})-\mathrm{int}(C) \\
 \implies & h^{\bar{a}^{l}_j}(\bar{y})+\rho \,\alpha\left(  \nabla f^{\bar{a}^{l}_{j}}(\bar{y})^{\top} \bar{v}_{l} + g^{\bar{a}^{l}_{j}}\left(\bar{y}+\bar{v}_{l}\right) - g^{\bar{a}^{l}_{j}}(\bar{y})\right) \prec_{C} h^{\bar{a}^{l}_j}(\bar{y}),
\end{align*}
where $\rho\in (0,1)$ and $\alpha \in \bar{I}\subseteq (0,1]$. For all $\alpha \in \bar{I}$, we
accordingly obtain
\begin{align*}
   & \left\{h^{\bar{a}^{l}_j}(\bar{y})+\rho \,\alpha\left(  \nabla f^{\bar{a}^{l}_{j}}(\bar{y})^{\top} \bar{v}_{l} + g^{\bar{a}^{l}_{j}}\left(\bar{y}+\bar{v}_{l}\right) - g^{\bar{a}^{l}_{j}}(\bar{y})\right)\right\}_{j\in [\bar{\omega}]} \prec^{\ell} \left\{h^{\bar{a}^{l}_j}(\bar{y})\right\}_{j\in [\bar{\omega}]}\\
   \implies & \left\{h^{\bar{a}^{l}_j}(\bar{y})\right\}_{j\in [\bar{\omega}]} \subseteq  \left\{h^{\bar{a}^{l}_j}(\bar{y})+\rho \,\alpha\left(  \nabla f^{\bar{a}^{l}_{j}}(\bar{y})^{\top}\bar{v}_{l}+ g^{\bar{a}^{l}_{j}}\left(\bar{y}+\bar{v}_{l}\right) - g^{\bar{a}^{l}_{j}}(\bar{y})\right)\right\}_{j\in [\bar{\omega}]} + \mathrm{int}(C)\\
   \overset{\eqref{tho_4.1_3}}{\implies} & \left\{h^{\bar{a}^{l}_j}(\bar{y})\right\}_{j\in [\bar{\omega}]}\subseteq   \left\{h^{\bar{a}^{l}_j}\left(\bar{y}+\alpha \bar{v}_{l}\right) \right\}_{j\in [\bar{\omega}]} + C+\mathrm{int}(C)\\
   \overset{\eqref{Domination_rule}}{\implies} & H(\bar{y}) \subseteq   \left\{h^{\bar{a}^{l}_j}\left(\bar{y}+\alpha\bar{v}_{l}\right) \right\}_{j\in [\bar{\omega}]} + \mathrm{int}(C) \subseteq H\left(\bar{y}+\alpha\bar{v}_{l}\right) + \mathrm{int}(C)
\end{align*}
because $C+\mathrm{int}(C)=\mathrm{int}(C)$. Hence, there exists a non-singleton interval $\bar{I}\subseteq(0,1]$ such that $H(\bar{y}+\alpha\bar{v}_{l})\prec^{\ell}H(\bar{y})$ for all $\alpha\in \bar{I}.$
\end{proof}

In the following, Algorithm~\ref{algo 1} outlines the proposed method, which is based on the proximal gradient framework augmented with a line search strategy.
  
\begin{algorithm}[h!] 
\caption{A proximal gradient method with line search for solving \eqref{sop}}\label{algo 1}
\textbf{Step 0}. Choose $x^{0}\in \mathrm{dom}(H),~\rho \in (0,1),~\mu\in (0,1),$ and $l>0$. Provide a tolerance $\epsilon>0$. Set $k := 0.$ \\

\textbf{Step 1}. Find $$M_k:=\mathcal{M}(H\left(x^{k}\right),C),~\omega_{k} := \lvert \mathcal{M}(H\left(x^{k}\right),C)\rvert,\text{ and }P_{k}:=P_{x^{k}}.$$

\textbf{Step 2}. Compute
$$(a^{l}_{k},v^{k}_{l})\in \underset{(a,v)\in P_{k}\times \mathbb{R}^{n}}{\arg\min}\left\{\underset{j\in [\omega_k]}{\max}\,\phi\left(\nabla f^{a_{j}}\left(x^{k}\right)^{\top}v+g^{a_{j}}(x^{k}+v)- g^{a_{j}}\left(x^{k}\right)\right)+\frac{l}{2}\|v\|^2\right\}.$$

\textbf{Step 3}. If $\|v_{l}^{k}\| < \epsilon$ or $\lvert\Theta_{l}\left(x^{k}\right)\rvert<\epsilon$, then stop.\\

\textbf{Step 4}. Find
$$\alpha_{k}:=\underset{i\in \mathbb{N}\cup\{0\}}{\min}\left\{\mu^{i}~\middle|~\begin{aligned}&h^{a_{k,j}}(x^{k}+\mu^{i} v_{l}^{k})\preceq_{C} h^{a_{k,j}}\left(x^{k}\right)\\&+\rho \mu^{i}\left(\nabla f^{a_{k,j}}\left(x^{k}\right)^{\top}v_{l}^{k}+g^{a_{k,j}}\left(x^{k}+v_{l}^{k}\right)- g^{a_{k,j}}\left(x^{k}\right)\right)\quad \forall j\in [\omega_{k}]\end{aligned}\right\}.$$

\textbf{Step 5}. Set $x^{k+1} := x^{k}+\alpha_{k}v_{l}^{k},~ k:= k+1,$ and go to Step 1.
\end{algorithm}

The well-definedness of Algorithm \ref{algo 1} depends on Steps 2 and 4. Since $x^{0}\in \mathrm{dom}(H)$, Remark~\ref{R_eqv} guarantees the existence of a $(a^{l}_{k}, v^{k}_{l})$ in Step 2. Moreover, whenever $\|v^{k}_{l}\| \neq 0$, Theorem~\ref{equ_stationary} ensures that the iterate $x^{k}$ is a non-stationary point of \eqref{sop}. Consequently, Step 3 combined with Theorem~\ref{LS_Theorem} guarantees the existence of a step-size $\alpha_{k}$ satisfying the condition specified in Step 4. Therefore, Algorithm~\ref{algo 1} is well-defined.

\subsection{Proximal gradient method without line search}
In this section, we propose a proximal gradient algorithm (Algorithm \ref{algo}) that omits line searches by fixing the step‐size to unity: given an iterate \(x^{k}\in \mathrm{dom}(H)\) and its descent direction \(v^{k}_{l}\), we set \(x^{k+1}:= x^{k} + v^{k}_{l},
\) and employ the strict descent condition:
\begin{equation}\label{m_d_p}
    H\left(x^{k+1}\right) \prec^{\ell} H\left(x^{k}\right).
\end{equation}
In order to guarantee \eqref{m_d_p} at every iteration, we suitably adjust the parameter \(l\) in \eqref{var_phi}. For the existence of such an \(l\), we assume that \(\nabla f^{j}\) is Lipschitz continuous with constant \(L^{j}\) for all $j\in [p]$, and define \(L := \max\{L^{j} \mid j \in [p]\}\).

In the sequel, we derive sufficient conditions on \(l\) in \(\eqref{var_phi}\) under which the descent property \(\eqref{m_d_p}\) is preserved. We start with recalling the three-point property that plays an important role in proximal-type methods.\\
\begin{tpp}
    Consider a proper convex function $\psi: \mathbb{R}^{n}\to \mathbb{R}\cup \{+\infty\}$ and define 
    \[z^{*}:=\underset{z\in \mathbb{R}^{n}}{\emph{\text{argmin}}}\left\{\psi(z)+\frac{1}{2}\|y-z\|^{2}\right\} \quad \text{for any given }y\in \mathbb{R}^{n}.\]
    Then, 
    \begin{equation}\label{tpp}
       2( \psi(z^{*})-\psi(z)) + \|z^{*}-z\|^{2} + \|z^{*}-y\|^{2}\leq \|y-z\|^{2} \quad \text{for all } z\in \mathbb{R}^{n}.
    \end{equation}
\end{tpp}

Observe that $$\alpha \mapsto \underset{j\in [\omega_k]}{\max}\,\phi\left(\nabla f^{a_{k,j}}\left(x^{k}\right)^{\top}v+g^{a_{k,j}}(x^{k}+v)- g^{a_{k,j}}\left(x^{k}\right)\right)=:\mathcal{F}^{k}(v)$$ is a proper convex function, and the expression for $v^{k}_{l}$ given in \eqref{opt_sol} can be equivalently represented as
\[
v^{k}_{l} = \underset{v \in \mathbb{R}^{n}}{\arg\min} \left\{ \frac{1}{l}\mathcal{F}^{k}(v) + \frac{1}{2} \|v\|^2 \right\}.
\]
Therefore, by setting $y = z = 0$ in \eqref{tpp}, we obtain
\[\frac{2}{l}(\mathcal{F}^{k}(v^{k}_{l})-\mathcal{F}^{k}(0))+ \|v^{k}_{l}-0\|^{2} + \|v^{k}_{l}-0\|^{2}\leq \|0-0\|^{2} \implies \mathcal{F}^{k}(v^{k}_{l})+l\|v^{k}_{l}\|^{2}\leq 0. \]
Thus, for all $k \in \mathbb{N}\cup\{0\}$, it follows that
\begin{equation}\label{tpp_c}
    \mathcal{F}^{k}(v^{k}_{l}) \leq- l \|v^{k}_{l}\|^{2}.
\end{equation}

Finally, the following result confirms that, for $l \geq \tfrac{L^{k}}{2}$, the iterates $x^{k+1}:=x^{k}+v^{k}_{l}$ satisfy the monotonicity condition \eqref{m_d_p}.\\

\begin{proposition}\label{w_line_mono}
Consider the problem \eqref{sop}. Let $x^{0}\in \mathrm{dom}(H)$ and $x^{k}:=x^{k-1}+v^{k-1}_{l}$ give a sequence of non-stationary points. Suppose that $\nabla f^{j}$ is Lipschitz continuous for every $j \in [p]$, with Lipschitz constant $L^{j}$ and define $L := \max\{L^{j} \mid j \in [p]\}$. Then, for every $k \in \mathbb{N}\cup\{0\}$, it follows that $H\left(x^{k+1}\right) \prec^{\ell} H\left(x^{k}\right)$.
\end{proposition}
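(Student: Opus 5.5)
The plan is to prove the componentwise descent $h^{a_{k,j}}(x^{k+1}) \prec_C h^{a_{k,j}}(x^k)$ for every $j\in[\omega_k]$, where $a^l_k=(a_{k,1},\dots,a_{k,\omega_k})$ is the partition element from Step 2, and then lift this to the set order exactly as in part (ii) of Theorem~\ref{LS_Theorem}. Implicitly I assume the parameter satisfies $l\geq L/2$, as the preceding sentence indicates; the estimate below only works under that condition.

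\textbf{Step 1: reduce to the descent lemma.} Fix $k$ and $j\in[\omega_k]$, and write $b:=a_{k,j}$ and $v:=v^k_l\neq 0$; nonzero-ness follows from non-stationarity and Theorem~\ref{equ_stationary}. Split
\[
h^{b}(x^k+v)-h^{b}(x^k)=\bigl(f^{b}(x^k+v)-f^{b}(x^k)-\nabla f^{b}(x^k)^{\top}v\bigr)+\bigl(\nabla f^{b}(x^k)^{\top}v+g^{b}(x^k+v)-g^{b}(x^k)\bigr).
\]
By sublinearity of $\phi$ (Lemma~\ref{scal_func}(i)), Lemma~\ref{descent lemma} with constant $L^{b}\leq L$, and the definition of $\mathcal{F}^k$ as a maximum over $j$, we get
\[
\phi\bigl(h^{b}(x^{k+1})-h^{b}(x^k)\bigr)\leq \tfrac{L}{2}\|v\|^2+\mathcal{F}^k(v).
\]

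\textbf{Step 2: use the three-point bound.} Combining with \eqref{tpp_c} gives $\phi\bigl(h^{b}(x^{k+1})-h^{b}(x^k)\bigr)\leq (\tfrac{L}{2}-l)\|v\|^2$. If $l>L/2$, this is strictly negative. If $l=L/2$ exactly, I would instead use the strict inequality $\mathcal{F}^k(v)\leq \Theta_l(x^k)-\tfrac{l}{2}\|v\|^2$, together with $\Theta_l(x^k)<0$ from Remark~\ref{R_eqv}. This yields the bound $\tfrac{L-l}{2}\|v\|^2+\Theta_l(x^k)$, which is $<0$ whenever $l\geq L$. So the borderline case $l=L/2$ needs care, and it is the main subtlety to check; I expect the intended hypothesis to be $l>L/2$, or to rely on a strict version of \eqref{tpp_c}. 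In either case Lemma~\ref{scal_func}(iv) then gives $h^{b}(x^{k+1})\prec_C h^{b}(x^k)$ for all $j\in[\omega_k]$.

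\textbf{Step 3: lift to the set order.} Since $\{h^{a_{k,j}}(x^k)\}_{j}=\mathcal{M}(H(x^k),C)$, the domination rule \eqref{Domination_rule} gives
\[
H(x^k)\subseteq \mathcal{M}(H(x^k),C)+C\subseteq \{h^{a_{k,j}}(x^{k+1})\}_j+\mathrm{int}(C)+C\subseteq H(x^{k+1})+\mathrm{int}(C),
\]
using $\mathrm{int}(C)+C=\mathrm{int}(C)$. This is exactly $H(x^{k+1})\prec^{\ell}H(x^k)$. Since $k$ was arbitrary, the result holds for every $k$, which also shows inductively that the iterates stay in $\mathrm{dom}(H)$.
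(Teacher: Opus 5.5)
Your argument is correct under $l>L/2$ and is essentially the paper's proof: you split off the linearization error, bound it via Lemma~\ref{descent lemma} and sublinearity of $\phi$, use \eqref{tpp_c} to reach $\tfrac{L-2l}{2}\|v^{k}_{l}\|^{2}<0$, and conclude with Lemma~\ref{scal_func}(iv), while your Step~3 spells out the lift to $\prec^{\ell}$ that the paper asserts in one line. Three refinements: the borderline case $l=L/2$ is genuinely false, not merely delicate (for $n=m=p=1$, $C=\mathbb{R}_{+}$, $f(x)=\tfrac{L}{2}x^{2}$, $g=0$ one gets $v^{k}_{l}=-2x^{k}$ and $h(x^{k+1})=h(x^{k})$, and \eqref{tpp_c} holds with equality whenever $\mathcal{F}^{k}$ is linear), so you should drop the $\Theta_l$ detour and simply assume $l>L/2$, as the paper's proof does; and your closing claim that the iterates remain in $\mathrm{dom}(H)$ does not follow, because the argument only shows that $h^{a_{k,j}}(x^{k+1})$ is finite and says nothing about the other $g^{j}(x^{k+1})$.
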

\begin{proof}
    In view of the partition element $a_{k}\in P_{k}$, for all $j\in [\omega_{k}]$, we have
    \begin{align*}
     &\phi\left(h^{a^{l}_{k,j}}\left(x^{k+1}\right)-h^{a^{l}_{k,j}}\left(x^{k}\right)\right)\\ 
     =&~ \phi\left(f^{a^{l}_{k,j}}\left(x^{k}+v^{k}_{l}\right)-f^{a^{l}_{k,j}}\left(x^{k}\right)+\nabla f^{a^{l}_{k,j}}\left(x^{k}\right)^{\top}v^{k}_{l} \right. \\ 
     &\left.~-\nabla f^{a^{l}_{k,j}}\left(x^{k}\right)^{\top}v^{k}_{l}+g^{a^{l}_{k,j}}\left(x^{k}+v^{k}_{l}\right)-g^{a^{l}_{k,j}}\left(x^{k}\right)\right)\\
    \leq&~\phi\left(f^{a^{l}_{k,j}}\left(x^{k}+v^{k}_{l}\right)-f^{a^{l}_{k,j}}\left(x^{k}\right)-\nabla f^{a^{l}_{k,j}}\left(x^{k}\right)^{\top}v^{k}_{l}\right) \\ &~ +\phi\left(\nabla f^{a^{l}_{k,j}}\left(x^{k}\right)^{\top}v^{k}_{l}+g^{j}\left(x^{k}+v^{k}_{l}\right)-g^{j}\left(x^{k}\right)\right)\\ 
     \leq&~\frac{L}{2}\|v^{k}_{l}\|^{2} +\phi\left(g^{j}\left(x^{k}+v^{k}_{l}\right)-g^{j}\left(x^{k}\right)+\nabla f^{a^{l}_{k,j}}\left(x^{k}\right)^{\top}v^{k}_{l}\right) 
    \end{align*}
    from Lemma \ref{scal_func}(i) and Lemma \ref{descent lemma}. Therefore, we obtain
   \[\underset{j\in [\omega_k]}{\max} \phi\left(h^{a^{l}_{k,j}}\left(x^{k+1}\right)-h^{a^{l}_{k,j}}\left(x^{k}\right)\right)  \leq \frac{L}{2}\|v^{k}_{l}\|^{2} + \underset{j\in [\omega_k]}{\max}\phi\left(g^{j}\left(x^{k}+v^{k}_{l}\right)-g^{j}\left(x^{k}\right)+\nabla f^{a^{l}_{k,j}}\left(x^{k}\right)^{\top}v^{k}_{l}\right).\]
    By using the notion of $\mathcal{F}^{k}$, we get
    \begin{equation}\label{Prp4.1_eq1}
        \underset{j\in [\omega_k]}{\max} \phi\left(h^{a^{l}_{k,j}}\left(x^{k+1}\right)-h^{a^{l}_{k,j}}\left(x^{k}\right)\right)  \leq \frac{L}{2}\|v^{k}_{l}\|^{2} + \mathcal{F}^{k}(v^{k}_{l})\overset{\eqref{tpp_c}}{\leq}  \frac{L-2l}{2}\|v^{k}_{l}\|^{2}.
    \end{equation}
    Note that $x^{k}\in \mathrm{dom}(H)$ and is a non-stationary point. Therefore, by Theorem \ref{equ_stationary}, it follows that $v^{k}_{l} \neq 0$. Consequently, if $L < 2l$, we have
\[
\underset{j \in [\omega_k]}{\max} \, \phi\left( h^{a^{l}_{k,j}}\left(x^{k+1}\right) - h^{a^{l}_{k,j}}\left(x^{k}\right) \right) < 0.
\]
This implies that, for all $j \in [\omega_k]$,
\[
\phi\left( h^{a^{l}_{k,j}}\left(x^{k+1}\right) - h^{a^{l}_{k,j}}\left(x^{k}\right) \right) < 0 \quad \implies \quad h^{a^{l}_{k,j}}\left(x^{k+1}\right) \prec_{C} h^{a^{l}_{k,j}}\left(x^{k}\right).
\]
because of Lemma \ref{scal_func}(iv). Therefore, $H\left(x^{k+1}\right) \prec^{\ell} H\left(x^{k}\right)$.
\end{proof}
In the following (Algorithm~\ref{algo}), the proposed method is presented, which is developed within the proximal gradient framework and operates without incorporating any line search strategy.

\begin{algorithm}[h!] 
\caption{A proximal gradient method without line search for solving \eqref{sop}}\label{algo}
\textbf{Step 0}. Choose $x^{0}\in \mathrm{dom}(H)\text{ and }l>\tfrac{L}{2}$. Provide a tolerance $\epsilon>0$. Set $k := 0.$ \\

\textbf{Step 1}. Find $$M_k:=\mathcal{M}(H\left(x^{k}\right),C),~\omega_{k} := \lvert \mathcal{M}(H\left(x^{k}\right),C)\rvert,\text{ and }P_{k}:=P_{x^{k}}.$$

\textbf{Step 2}. Compute
$$(a^{l}_{k},v^{k}_{l})\in  \underset{(a,v)\in P_{k}\times \mathbb{R}^{n}}{\arg\min}\left\{\underset{j\in [\omega_k]}{\max}\,\phi\left(\nabla f^{a_{j}}\left(x^{k}\right)^{\top}v+g^{a_{j}}(x^{k}+v)- g^{a_{j}}\left(x^{k}\right)\right)+\frac{l}{2}\|v\|^2\right\}.$$

\textbf{Step 3}. If $\|v^{k}_{l}\| < \epsilon$ or $\lvert\Theta_{l}\left(x^{k}\right)\rvert<\epsilon$, then stop.\\

\textbf{Step 4}. Set $x^{k+1} := x^{k}+v^{k}_{l},~ k:= k+1,$ and go to Step 1.
\end{algorithm}

The well-definedness of Algorithm~\ref{algo} relies only on the execution of Step 2. Remark~\ref{R_eqv} guarantees the existence of an optimal solution $(a^{l}_{k}, v^{k}_{l})$ to the subproblem \eqref{opt_sol}. Consequently, Algorithm~\ref{algo} is well-defined.

\section{Convergence and complexity analysis}\label{sec.5}
In this section, we examine the convergence of the sequence of non-stationary points generated by both proposed algorithms---Algorithms \ref{algo 1} and \ref{algo}---for the problem~\eqref{sop}. We then analyze their computational complexity and discuss the corresponding convergence rates. The analysis relies on equivalent conditions of the stationarity presented in Theorem~\ref{equ_stationary}. 

We begin by establishing a relation involving the function $\xi$ defined in~\eqref{zeta}, which will be used frequently in the subsequent analysis.  
Let $\bar{y}\in \mathbb{R}^{n}$ be any given point. Since $\mathcal{M}(H(\bar{y}), C) \preceq^\ell H(\bar{y})$ and $\mathcal{M}(H(\bar{y}), C) \subseteq H(\bar{y})$, it follows from the monotonicity property of $\xi$ that
\begin{equation}\label{Mo_zeta}
    \xi \circ H(\bar{y}) 
    = \xi\left(\mathcal{M}(H(\bar{y}), C)\right) 
    = \min_{j\in [\bar{\omega}]} \phi\!\left(h^{\bar{a}^{l}_{j}}(\bar{y})\right).
\end{equation}

To establish the convergence of the proposed algorithms, we take the following commonly used assumption.\\
\begin{assumption}\label{Ass_label_bdd}
Define $\mathcal{L} := \{ x \in \mathbb{R}^{n} \mid H(x) \preceq^{\ell} H\left(x^{0}\right) \}$ for a given initial point from $ \mathbb{R}^{n}$.  
Then, for any sequence $\{S_{k}\} \subset H(\mathcal{L})$ satisfying $S_{k+1} \preceq^{\ell} S_{k}$ for all $k = 0, 1, 2, \ldots$, there exists a bounded set $S \subseteq \mathbb{R}^{m}$ such that 
\begin{equation}\label{lower_bdd_H}
   S \preceq^{\ell} S_{k} \quad \text{for all } k \in \mathbb{N}\cup\{0\}.
\end{equation}
\end{assumption}
We now establish global convergence of Algorithm~\ref{algo 1}.\\

\begin{theorem}\label{Con_algo1}
   Consider the problem \eqref{sop} and assume that the map $H$ satisfies \emph{Assumption~\ref{Ass_label_bdd}}. Let $\{x^{k}\}$ be a sequence of non-stationary points generated by \emph{Algorithm \ref{algo 1}}. Then, for any given precision $\epsilon>0$, there exists $k\in \mathbb{N}\cup \{0\}$ such that $\left\|v^{k}_{l}\right\| \leq \varepsilon$.
\end{theorem}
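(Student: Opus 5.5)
We argue by contradiction: suppose $\|v^{k}_{l}\|>\epsilon$ for every $k$. The plan is to show that the merit value $\xi\circ H(x^{k})$ then decreases by a fixed positive amount at every iteration, which is incompatible with Assumption~\ref{Ass_label_bdd}.

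\textbf{Per-iteration decrease.} For each $j\in[\omega_k]$, the Armijo condition in Step 4 and Lemma~\ref{scal_func}(i),(ii) give
\[
\phi\bigl(h^{a^{l}_{k,j}}(x^{k+1})\bigr)\le \phi\bigl(h^{a^{l}_{k,j}}(x^{k})\bigr)+\rho\alpha_k\,\mathcal{F}^{k}(v^{k}_{l}).
\]
By \eqref{tpp_c}, $\mathcal{F}^{k}(v^{k}_{l})\le -l\|v^{k}_{l}\|^{2}<-l\epsilon^{2}$. Taking the minimum over $j$ and using \eqref{Mo_zeta} on the right-hand side, we get
\[
\xi\circ H(x^{k+1})\le \min_{j}\phi\bigl(h^{a^{l}_{k,j}}(x^{k+1})\bigr)\le \xi\circ H(x^{k})-\rho\alpha_k l\epsilon^{2}.
\]
The first inequality holds because $\xi\circ H(x^{k+1})$ is an infimum over all of $H(x^{k+1})$.

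\textbf{Telescoping against the lower bound.} By Theorem~\ref{LS_Theorem}(ii), $H(x^{k+1})\prec^{\ell}H(x^{k})$, so every iterate lies in $\mathcal{L}$ and $S_k:=H(x^{k})$ is a $\preceq^{\ell}$-decreasing sequence. Assumption~\ref{Ass_label_bdd} then supplies a bounded set $S$ with $S\preceq^{\ell}S_k$ for all $k$. Monotonicity of $\xi$ gives $\xi\circ H(x^{k})\ge \xi(S)>-\infty$, the last bound holding because $S$ is bounded and $\phi$ is Lipschitz. Summing the decrease inequality yields
\[
\rho l\epsilon^{2}\sum_{k}\alpha_k\le \xi\circ H(x^{0})-\xi(S)<\infty,
\]
so $\alpha_k\to0$.

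\textbf{Main obstacle: $\alpha_k\to0$ is impossible.} The difficulty is that no Lipschitz constant for $\nabla f^{j}$ is available here, so $\alpha_k$ cannot be bounded below directly. I would first show that $\{x^{k}\}$ and $\{v^{k}_{l}\}$ are bounded. For the directions, compare the value of $\mathcal{F}^{k}$ with $0$ and use an affine minorant of each convex $g^{j}$; the quadratic term $\tfrac{l}{2}\|v\|^{2}$ then bounds $\|v^{k}_{l}\|$ in terms of $\|\nabla f^{j}(x^{k})\|$. Boundedness of the iterates themselves would have to come from Assumption~\ref{Ass_label_bdd} through a bounded level-set argument, and this is the weakest link of the plan. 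Granting it, pass to a subsequence along which $x^{k}\to\bar x$, $v^{k}_{l}\to\bar v$, and $a^{l}_{k}$ and $\omega_k$ are constant. The last is possible because $[p]$ is finite, so only finitely many partition elements and cardinalities occur.

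\textbf{Contradiction along the subsequence.} Since $\alpha_k\to0$, for large $k$ the trial step $\alpha_k/\mu\le1$ fails the Armijo test for some index $\hat j$, which we may fix along a further subsequence. Dividing that failed inequality by $\alpha_k/\mu$ and applying \eqref{convex_direction_deri} to the $g$-part, as in the proof of Theorem~\ref{LS_Theorem}, the mean value theorem and continuity of $\nabla f$ give
\[
(1-\rho)\bigl(\nabla f^{\hat a}(x^{k})^{\top}v^{k}_{l}+g^{\hat a}(x^{k}+v^{k}_{l})-g^{\hat a}(x^{k})\bigr)+o(1)\notin -\mathrm{int}(C),
\]
where $\hat a$ denotes the index $a^{l}_{k,\hat j}$. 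Hence $\phi$ of the bracket is at least $-o(1)$. But $\phi$ of the bracket is at most $\mathcal{F}^{k}(v^{k}_{l})\le -l\epsilon^{2}$, so $-l\epsilon^{2}\ge -o(1)$, which fails for large $k$.

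This argument works uniformly in $k$ and so avoids taking limits of the $g$-terms. Where passing to limits is unavoidable, I would rely on the fact that closed convex $g^{j}$ are continuous on the interior of their domain, where the iterates are assumed to remain.
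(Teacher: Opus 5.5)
Your outline follows the paper's proof: Armijo plus Lemma~\ref{scal_func} and \eqref{Mo_zeta} give a decrease of $\xi\circ H(x^{k})$, telescoping against the set $S$ of Assumption~\ref{Ass_label_bdd} gives $\sum_k\alpha_k<\infty$, and the rejected trial step $\alpha_k/\mu$ is combined with \eqref{convex_direction_deri}. You are right that the last step needs a first-order expansion of $f^{j}$ that is uniform in $k$. The paper passes over this point: it sends $\hat i\to\infty$ although the admissible $k\ge N_{\hat i}$ depends on $\hat i$.

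The gap is that you only \emph{grant} this uniformity, and the route you indicate for it does not work. Assumption~\ref{Ass_label_bdd} bounds the images $H(x^{k})$ from below in the $\preceq^{\ell}$ sense and gives no bounded level set. For example, $p=m=n=1$, $C=\mathbb{R}_{+}$, $g=0$, $f(x)=e^{x}$, $x^{0}=0$ satisfies it with $S=\{0\}$, yet $\mathcal{L}=(-\infty,0]$. Your affine-minorant bound on $\|v^{k}_{l}\|$ has a similar problem. It needs the slopes of the minorants of $g^{j}$ at $x^{k}$ to stay bounded, which can fail as the iterates approach the boundary of $\mathrm{dom}(g^{j})$. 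Remaining in the interior is not a hypothesis of the theorem.

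You can close the gap with what you already have. Before replacing $\|v^{k}_{l}\|^{2}$ by $\epsilon^{2}$, your decrease inequality gives $\rho l\sum_{k}\alpha_k\|v^{k}_{l}\|^{2}\le\xi\circ H(x^{0})-\xi(S)$. So under $\|v^{k}_{l}\|>\epsilon$ you get $\sum_k\|x^{k+1}-x^{k}\|=\sum_k\alpha_k\|v^{k}_{l}\|\le\epsilon^{-1}\sum_k\alpha_k\|v^{k}_{l}\|^{2}<\infty$. The iterates therefore have finite length, hence converge to some $\bar x$, and $\alpha_k\|v^{k}_{l}\|\to0$.

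Now, for large $k$, set $t_k:=\alpha_k/\mu\le1$. Let $B_k$ be the bracket for the failing index $\hat a$, and let $r_k:=t_k^{-1}\bigl(f^{\hat a}(x^{k}+t_kv^{k}_{l})-f^{\hat a}(x^{k})\bigr)-\nabla f^{\hat a}(x^{k})^{\top}v^{k}_{l}$. Then $\|r_k\|\le\eta_k\|v^{k}_{l}\|$, where $\eta_k:=\max_{j\in[p]}\sup_{s\in[0,1]}\|\nabla f^{j}(x^{k}+st_kv^{k}_{l})-\nabla f^{j}(x^{k})\|$. Since $t_k\|v^{k}_{l}\|\to0$ and $\nabla f^{j}$ is uniformly continuous on a closed ball around $\bar x$, $\eta_k\to0$. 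The rejected step, Lemma~\ref{scal_func}(i),(iii),(iv) and \eqref{tpp_c} give
\[
0<\phi\bigl((1-\rho)B_k+r_k\bigr)\le(1-\rho)\phi(B_k)+\|r_k\|\le-(1-\rho)l\|v^{k}_{l}\|^{2}+\eta_k\|v^{k}_{l}\|.
\]
Hence $(1-\rho)l\epsilon<\eta_k\to0$, a contradiction. Keeping $\|v^{k}_{l}\|^{2}$ rather than $\epsilon^{2}$ is what makes a bound on the directions unnecessary. The argument needs no subsequence and no continuity of $g^{j}$.
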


\begin{proof}
Suppose, to the contrary, that there exists a constant $\varepsilon > 0$ such that $\left\|v^{k}_{l}\right\| \geq \varepsilon$ for all $k \in \mathbb{N}\cup\{0\}$.
 Then, from Remark~\ref{R_eqv}, the following inequality holds:
    \begin{equation}\label{the5_e1}
       -\underset{j\in [\omega_{k}]}{\max} \phi\left(\nabla f^{a^{l}_{k,j}}\left(x^{k}\right)^{\top} v^{k}_{l}+g^{a^{l}_{k,j}}\left(x^{k}+v^{k}_{l}\right)-g^{a^{l}_{k,j}}\left(x^{k}\right)\right)\geq  \frac{l}{2}\left\|v^{k}_{l}\right\|^{2}\geq\frac{l}{2}\varepsilon^{2}>0
      .
    \end{equation}
    From Theorem \ref{LS_Theorem} and  Lemma \ref{scal_func}(ii), we have 
    \begin{align*}
       & \underset{j\in [\omega_{k}]}{\min}\phi\left(h^{a^{l}_{k,j}}\left(x^{k+1}\right)\right)\\
       \leq& ~ \underset{j\in [\omega_{k}]}{\min} \phi \left(h^{a^{l}_{k,j}}\left(x^{k}\right)+\rho\, \alpha_{k}\left(\nabla f^{a^{l}_{k,j}}\left(x^{k}\right)^{\top} v^{k}_{l}+g^{a^{l}_{k,j}}\left(x^{k}+v^{k}_{l}\right)-g^{a^{l}_{k,j}}\left(x^{k}\right)\right)\right)\\
        \leq & ~ \underset{j\in [\omega_{k}]}{\min} \phi\left( h^{a^{l}_{k,j}}\left(x^{k}\right)\right)+\rho\, \alpha_{k} \underset{j\in [\omega_{k}]}{\max} \phi\left(\nabla f^{a^{l}_{k,j}}\left(x^{k}\right)^{\top} v^{k}_{l}+g^{a^{l}_{k,j}}\left(x^{k}+v^{k}_{l}\right)-g^{a^{l}_{k,j}}\left(x^{k}\right)\right)
    \end{align*}
    because $\rho$ and $\alpha_{k}$ are positive. Therefore, applying the function $\xi$ defined in~\eqref{zeta} and the equation \eqref{Mo_zeta}, it follows that 
    \begin{align*}
              & \xi \circ H\left(x^{k+1}\right)  \leq \xi \circ H\left(x^{k}\right)+\rho\, \alpha_{k} \underset{j\in [\omega_{k}]}{\max} \phi\left(\nabla f^{a^{l}_{k,j}}\left(x^{k}\right)^{\top} v^{k}_{l}+g^{a^{l}_{k,j}}\left(x^{k}+v^{k}_{l}\right)-g^{a^{l}_{k,j}}\left(x^{k}\right)\right)\\
         \implies & -\rho\, \alpha_{k} \underset{j\in [\omega_{k}]}{\max} \phi\left(\nabla f^{a^{l}_{k,j}}\left(x^{k}\right)^{\top} v^{k}_{l}+g^{a^{l}_{k,j}}\left(x^{k}+v^{k}_{l}\right)-g^{a^{l}_{k,j}}\left(x^{k}\right)\right)\leq  \xi \circ H\left(x^{k}\right)  - \xi \circ H\left(x^{k+1}\right).
    \end{align*}
   Adding both sides of the above inequality from $k = 0$ to $\hat{k}-1,~\hat{k} \in \mathbb{N}$, we obtain
    \begin{align*}
         & -\rho~ \sum_{k=0}^{\hat{k}-1} \alpha_{k} \underset{j\in [\omega_{k}]}{\max} \phi\left(\nabla f^{a^{l}_{k,j}}\left(x^{k}\right)^{\top} v^{k}_{l}+g^{a^{l}_{k,j}}\left(x^{k}+v^{k}_{l}\right)-g^{a^{l}_{k,j}}\left(x^{k}\right)\right)\\
         \leq& ~  \sum_{k=0}^{\hat{k}-1}\left\{ \xi \circ H\left(x^{k}\right)  - \xi \circ H\left(x^{k+1}\right)\right\}=\xi \circ H\left(x^{0}\right)  - \xi \circ H\left(x^{\hat{k}}\right)\overset{\eqref{lower_bdd_H}}{\leq} \xi \circ H\left(x^{0}\right)  - \xi \circ S.    
    \end{align*}
    Therefore, from \eqref{the5_e1}, we get
    \begin{equation*}
        0<\frac{l}{2}\varepsilon^{2} \sum_{k=0}^{\hat{k}} \alpha_{k}\leq  \frac{\xi \circ H\left(x^{0}\right)  - \xi \circ S}{\rho}.
    \end{equation*}
    Consequently, $\underset{k\to +\infty}{\lim}\alpha_{k}=0.$
    Then, for any given arbitrary $\hat{i}\in \mathbb{N}$, $\mu^{\hat{i}}$ does not satisfy the condition given in Step 4 of Algorithm \ref{algo 1} for a sufficiently large $k$. Since $p$ is finite, there exists a sub-sequence $\{x^{n_k}\}$ of $\{x^{k}\}$ such that $a^{l}_{n_k}$ is fixed. Denote $\breve{a}^{l}:=a^{l}_{n_k}$ and $\breve{w}:= \omega_{n_k}$. Therefore, there exists $N_{\hat{i}}\in \mathbb{N}$ such that for all $k\geq N_{\hat{i}}$, the following relation holds:
    \begin{align*}
    & \underset{j\in [\breve{\omega}]}{\max}\Big\{h^{\breve{a}^{l}_{j}}\left(x^{n_k}+\mu^{\hat{i}} v^{l}_{n_k}\right)- h^{\breve{a}^{l}_{j}}(x^{n_k})\\
    &\hspace{2cm}-\rho \,\mu^{\hat{i}}\left(\nabla f^{\breve{a}^{l}_{j}}(x^{n_k})^{\top} v^{l}_{n_k}+g^{\breve{a}^{l}_{j}}\left(x^{n_k}+v^{l}_{n_k}\right)-g^{\breve{a}^{l}_{j}}(x^{n_k})\right)\Big\}\npreceq_C 0\\
     \overset{\eqref{convex_direction_deri}}{\implies} & \underset{j\in [\breve{\omega}]}{\max}\Big\{\tfrac{1}{\mu^{\hat{i}}}\left(f^{\breve{a}^{l}_{j}}\left(x^{n_k}+\mu^{\hat{i}} v^{l}_{n_k}\right)-f^{\breve{a}^{l}_{j}}\left(x^{n_k}\right)\right) \\&\hspace{2cm}+(1-\rho)\left\{g^{\breve{a}^{l}_{j}}\left(x^{n_k}+ v^{l}_{n_k}\right)-g^{\breve{a}^{l}_{j}}\left(x^{n_k}\right)\right\}-\rho \nabla f^{\breve{a}^{l}_{j}}(x^{n_k})^{\top} v^{l}_{n_k}\Big\}
     \npreceq_{C} 0 
    \end{align*}
   because $\mu\in (0,1)$. Accordingly, for sufficiently large $\hat{i}$, it follows that
  \[ \underset{j\in [\breve{\omega}]}{\max}\,\phi\left(\nabla f^{\breve{a}^{l}_{j}}(x^{n_k})^{\top}v^{l}_{n_k}+  g^{\breve{a}^{l}_{j}}\left(x^{n_k}+ v^{l}_{n_k}\right)-g^{\breve{a}^{l}_{j}}\left(x^{n_k}\right)\right)\geq 0,\] 
   which is
    a contradiction of \eqref{the5_e1}, thereby completing the proof.
\end{proof}
\begin{remark}\label{con_steep_dest}
If $g^{j} = 0$ for all $j \in [p]$, the set optimization problem~\eqref{sop} coincides with the one studied in \cite{steepest2021set_optimization}. In this case, the steepest descent method proposed in \cite{steepest2021set_optimization} is equivalent to Algorithm~\ref{algo 1}. It is worth noting that Theorem~\ref{Con_algo1} establishes the convergence of Algorithm~\ref{algo 1} without imposing any regularity assumption on the stationary point. Consequently, Theorem~\ref{Con_algo1} guarantees that the sequence of non-stationary points generated by the steepest descent method converges to a stationary point even in the absence of regularity.\\
\end{remark}

In the following, we have discussed the convergence of Algorithm \ref{algo}.\\

\begin{theorem}\label{con_alg2}
    Assume that the map $H$ satisfies \emph{Assumption~\ref{Ass_label_bdd}}. Suppose that $\nabla f^{j}$ is Lipschitz continuous for every $j \in [p]$, with Lipschitz constant $L^{j}$ and define $L := \max\{L^{j} \mid j \in [p]\}$. Let $\{x^{k}\}$ be a sequence of non-stationary points of the problem \eqref{sop} generated by \emph{Algorithm \ref{algo}}. Then, $\underset{k\to +\infty}{\lim}\left\|v^{k}_{l}\right\|=0$.
\end{theorem}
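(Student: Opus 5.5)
The argument is a telescoping sufficient-decrease estimate for the merit function $\xi\circ H$, using the bound \eqref{Prp4.1_eq1} from the proof of Proposition~\ref{w_line_mono}. For each $k$, write $a^{l}_{k}$ for the partition element chosen in Step 2 of Algorithm~\ref{algo}. Relation \eqref{Prp4.1_eq1} gives
\[
\phi\left(h^{a^{l}_{k,j}}\left(x^{k+1}\right)-h^{a^{l}_{k,j}}\left(x^{k}\right)\right)\leq \frac{L-2l}{2}\left\|v^{k}_{l}\right\|^{2}\quad \text{for all } j\in[\omega_k].
\]
By subadditivity of $\phi$ (Lemma~\ref{scal_func}(i)), $\phi(h^{a^{l}_{k,j}}(x^{k+1}))\leq \phi(h^{a^{l}_{k,j}}(x^{k}))+\frac{L-2l}{2}\|v^{k}_{l}\|^{2}$ for every $j$. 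Taking the minimum over $j\in[\omega_k]$ yields the same inequality for the minima.

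Next, convert this into a decrease of $\xi\circ H$, mirroring the proof of Theorem~\ref{Con_algo1}. On the left, since $h^{a^{l}_{k,j}}(x^{k+1})\in H(x^{k+1})$, we have $\xi\circ H(x^{k+1})\leq \min_{j}\phi(h^{a^{l}_{k,j}}(x^{k+1}))$. On the right, \eqref{Mo_zeta} identifies $\min_j\phi(h^{a^{l}_{k,j}}(x^{k}))$ with $\xi\circ H(x^{k})$, because the components of $a^{l}_{k}$ enumerate $\mathcal{M}(H(x^{k}),C)$. Setting $c:=\frac{2l-L}{2}$, which is positive since $l>L/2$ in Step 0, this gives
\[
c\left\|v^{k}_{l}\right\|^{2}\leq \xi\circ H\left(x^{k}\right)-\xi\circ H\left(x^{k+1}\right).
\]

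Then telescope from $k=0$ to $\hat k-1$ to get $c\sum_{k=0}^{\hat k-1}\|v^{k}_{l}\|^{2}\leq \xi\circ H(x^{0})-\xi\circ H(x^{\hat k})$. To bound the right-hand side uniformly, apply Assumption~\ref{Ass_label_bdd} to $S_k:=H(x^{k})$:
\begin{itemize}
\item By Proposition~\ref{w_line_mono}, $H(x^{k+1})\prec^{\ell}H(x^{k})$, hence $H(x^{k+1})\preceq^{\ell}H(x^{k})$.
\item By transitivity of $\preceq^{\ell}$ (inherited from $C+C\subseteq C$), every $x^{k}$ lies in $\mathcal{L}$, so $\{S_k\}\subset H(\mathcal{L})$ is decreasing.
\end{itemize}
The assumption then provides a bounded set $S$ with $S\preceq^{\ell}H(x^{k})$ for all $k$. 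Monotonicity of $\xi$ gives $\xi\circ H(x^{\hat k})\geq \xi(S)$. Since $S$ is bounded and $\phi$ is $1$-Lipschitz (Lemma~\ref{scal_func}(iii)), $\xi(S)>-\infty$.

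Hence $\sum_{k\ge 0}\|v^{k}_{l}\|^{2}\leq (\xi\circ H(x^{0})-\xi(S))/c<\infty$, which forces $\|v^{k}_{l}\|\to 0$. The same bound also gives the $\mathcal{O}(1/\sqrt{k})$ rate for $\min_{i\le k}\|v^{i}_{l}\|$.

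The main obstacle is the middle step: the partition element $a^{l}_{k}$ is chosen at $x^{k}$, so its components need not realize the minimal set at $x^{k+1}$. The fix is to use only the inequality $\xi\circ H(x^{k+1})\leq\min_j\phi(h^{a^{l}_{k,j}}(x^{k+1}))$ (membership in $H(x^{k+1})$), and to use \eqref{Mo_zeta} only at $x^{k}$, where $a^{l}_{k}$ is a genuine partition element.
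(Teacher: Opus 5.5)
Your proof is correct and follows the paper's route: the per-iterate bound \eqref{Prp4.1_eq1}, a sufficient decrease of $\xi\circ H$ via \eqref{Mo_zeta}, telescoping, and the lower bound $\xi(S)$ from Assumption~\ref{Ass_label_bdd}. Your middle step, taking the minimum over $j$ on both sides of $\phi(h^{a^{l}_{k,j}}(x^{k+1}))\leq \phi(h^{a^{l}_{k,j}}(x^{k}))+\frac{L-2l}{2}\|v^{k}_{l}\|^{2}$, is the sound way to do this; the paper's intermediate bound on $\max_{j}\phi(h^{a^{l}_{k,j}}(x^{k+1}))-\min_{j}\phi(h^{a^{l}_{k,j}}(x^{k}))$ does not follow from subadditivity as written, and you also supply checks the paper leaves implicit (the iterates lie in $\mathcal{L}$, and $\xi(S)>-\infty$).
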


\begin{proof}
    In view of the $k^{\text{th}}$ iteration of Algorithm \ref{algo}, the relation \eqref{Prp4.1_eq1} implies
    \[ \underset{j\in [\omega_k]}{\max} \phi\left(h^{a^{l}_{k,j}}\left(x^{k+1}\right)-h^{a^{l}_{k,j}}\left(x^{k}\right)\right)  \leq   \frac{L-2l}{2}\left\|v^{k}_{l}\right\|^{2}.\]
    From Lemma \ref{scal_func} (i), it follows that 
 \[ \underset{j\in [\omega_k]}{\max} \phi \left(h^{a^{l}_{k,j}}\left(x^{k+1}\right)\right)-\underset{j\in [\omega_k]}{\min} \phi\left(h^{a^{l}_{k,j}}\left(x^{k}\right)\right)  \leq   \frac{L-2l}{2}\left\|v^{k}_{l}\right\|^{2}.\]
Therefore, applying the function $\xi$ defined in~\eqref{zeta} and the equation \eqref{Mo_zeta}, it follows that
 \begin{align*}
   &  \frac{L-2l}{2}\left\|v^{k}_{l}\right\|^{2} \geq \underset{j\in [\omega_k]}{\max} \phi \left(h^{a^{l}_{k,j}}\left(x^{k+1}\right)\right)-\xi \circ H\left(x^{k}\right)\geq \xi \circ H\left(x^{k+1}\right)- \xi \circ H\left(x^{k}\right) \\
   \implies  & \frac{L-2l}{2}  \sum_{k=0}^{\hat{k}-1}\left\|v^{k}_{l}\right\|^{2} \geq \sum_{k=0}^{\hat{k}-1}\left\{\xi \circ H\left(x^{k+1}\right)-\xi \circ H\left(x^{k}\right)\right\}= \xi \circ H\left(x^{\hat{k}}\right)-\xi \circ H\left(x^{0}\right), \quad \hat{k}\in \mathbb{N} \\
     \implies  & 0 >   \frac{L-2l}{2} \sum_{k=0}^{\hat{k}-1}\left\|v^{k}_{l}\right\|^{2} \overset{\eqref{lower_bdd_H}}{\geq} \xi \circ S-\xi \circ H\left(x^{0}\right) 
 \end{align*}
because $l>\tfrac{L}{2}$ and $x^{k}\in \mathrm{dom}(H)$ and is a non-stationary point. Accordingly, we obtain  
$\underset{k\to +\infty}{\lim} \left\|v^{k}_{l}\right\| = 0$,  
which completes the proof.
\end{proof}

Next, we address the computational complexity of the proposed techniques. For Algorithm~\ref{algo 1}, this analysis begins by establishing a lower bound on the step-size generated by the algorithm,  
as this bound plays a crucial role in determining its complexity. We determine it by imposing an additional condition on $H$ that $\nabla f^{j}$ is Lipschitz continuous for all $j \in [p]$ in the following lemma.\\

\begin{lemma}\label{lemma_step_size}
    Consider the problem \eqref{sop}. Assume that $\nabla f^{j}$ is Lipschitz continuous with Lipschitz constant $L^{j}$ for every $j \in [p]$ in the objective map $H$, and denote $L := \max\{ L^{j} \mid j \in [p] \}$. Let $x^{k}$ be a non-stationary point and $\alpha_{k}$ be a step-size generated by \emph{Algorithm \ref{algo 1}} at $x^{k}$. Then, for $k\in \mathbb{N}\cup\{0\}$,
    \begin{equation}\label{step_size-bound}
        \alpha_{k}\geq \min\left\{\frac{(1-\rho)\mu l}{L},1\right\}.
    \end{equation}
\end{lemma}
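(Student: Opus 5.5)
The plan is the standard backtracking argument. First we show that every $\alpha\in(0,1]$ with $\alpha\le (1-\rho)l/L$ satisfies the Armijo condition of Step 4 for all $j\in[\omega_k]$. Since Step 4 returns the largest power $\mu^{i}$ satisfying that condition, either $\alpha_k=1$, or $\alpha_k/\mu$ violates the condition. In the latter case $\alpha_k/\mu>(1-\rho)l/L$, which gives \eqref{step_size-bound}.

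\textbf{Step 1 (bound on $h$).} Fix $j\in[\omega_k]$ and write $a=a^{l}_{k,j}$ and $v=v^{k}_{l}$. We split
\[
h^{a}(x^k+\alpha v)-h^{a}(x^k)=\bigl[f^{a}(x^k+\alpha v)-f^{a}(x^k)-\alpha\nabla f^{a}(x^k)^{\top}v\bigr]+\bigl[\alpha\nabla f^{a}(x^k)^{\top}v+g^{a}(x^k+\alpha v)-g^{a}(x^k)\bigr].
\]
By \eqref{convex_direction_deri} with $\alpha\in(0,1]$, the second bracket is $\preceq_C \alpha w_j$, where $w_j:=\nabla f^{a}(x^k)^{\top}v+g^{a}(x^k+v)-g^{a}(x^k)$.

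\textbf{Step 2 (reduce to a cone membership).} We must show $\alpha w_j-\rho\alpha w_j+E_j\in -C$, where $E_j$ is the first bracket. Equivalently, by Lemma~\ref{scal_func}(iv), we need
\[
\phi\bigl((1-\rho)\alpha w_j+E_j+c\bigr)\le 0
\]
for the cone slack $c\in -C$ coming from Step 1. Lemma~\ref{scal_func}(i) and (ii) allow dropping $c$ and splitting the sum, so it suffices to show $(1-\rho)\alpha\,\phi(w_j)+\phi(E_j)\le 0$.

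\textbf{Step 3 (estimate the two pieces).} Lemma~\ref{descent lemma} gives $\phi(E_j)\le \tfrac{L}{2}\alpha^2\|v\|^2$. For the other term, \eqref{tpp_c} gives $\phi(w_j)\le\mathcal{F}^k(v)\le -l\|v\|^2$; the weaker estimate $-\tfrac{l}{2}\|v\|^2$ from Remark~\ref{R_eqv} would also do, at the cost of a factor $2$. Together these give
\[
(1-\rho)\alpha\,\phi(w_j)+\phi(E_j)\le \alpha\|v\|^2\Bigl(\tfrac{L\alpha}{2}-(1-\rho)l\Bigr),
\]
which is $\le 0$ whenever $\alpha\le 2(1-\rho)l/L$. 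A fortiori the condition holds for $\alpha\le(1-\rho)l/L$, which is the margin the stated bound needs.

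\textbf{Main obstacle.} The delicate point is the sign bookkeeping: the convexity inequality gives a $\preceq_C$ relation, and this must be turned into a $\phi$-inequality on the target vector. I would handle it by subtracting the cone slack and using the monotonicity and sublinearity of $\phi$, rather than by applying $\phi$ separately to each side of the Armijo relation. With that in place, the case analysis ($\alpha_k=1$, or the rejected trial $\alpha_k/\mu$ exceeds the threshold) finishes the proof.
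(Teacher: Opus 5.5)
Your proof is correct and is essentially the paper's argument run directly rather than by contrapositive. The paper assumes $\alpha_k<1$, writes out the failed Armijo test at $\alpha_k/\mu$, and uses the same splitting, \eqref{convex_direction_deri}, Lemma~\ref{descent lemma}, and sublinearity/monotonicity of $\phi$ to reach $\tfrac{L}{2}(\alpha_k/\mu)^2\|v^k_l\|^2-(1-\rho)\tfrac{\alpha_k}{\mu}\tfrac{l}{2}\|v^k_l\|^2>0$. Your one deviation, invoking \eqref{tpp_c} instead of \eqref{R_eqv_I}, is legitimate here: it only uses that $v^k_l$ minimizes the $l$-strongly convex function $\varphi_l(a^l_k,\cdot\,;x^k)$, for any $l>0$. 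It in fact gives the sharper bound $\alpha_k\ge\min\{2(1-\rho)\mu l/L,1\}$.
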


\begin{proof}
    Note that, for $\alpha_k=1$, the relation \eqref{step_size-bound} is trivial. Now, consider $\alpha_{k}<1$. Therefore, from Steps 0 and 4 of Algorithm \ref{algo 1}, it follows that there exists $\hat{j}\in [\omega_{k}]$ such that
    \begin{align*}
       & h^{a^{l}_{k,\hat{j}}}\left(x^{k}+\tfrac{\alpha_{k}}{\mu} v_{l}^{k}\right)- h^{a^{l}_{k,\hat{j}}}\left(x^{k}\right)-\rho \tfrac{\alpha_{k}}{\mu}\left(\nabla f^{a^{l}_{k,\hat{j}}}\left(x^{k}\right)^{\top}v_{l}^{k}+g^{a^{l}_{k,\hat{j}}}\left(x^{k}+v_{l}^{k}\right)- g^{a^{l}_{k,\hat{j}}}\left(x^{k}\right)\right)\npreceq_{C} 0\\
     \implies &    \left\{f^{a^{l}_{k,\hat{j}}}\left(x^{k}+\tfrac{\alpha_{k}}{\mu} v_{l}^{k}\right)- f^{a^{l}_{k,\hat{j}}}\left(x^{k}\right) -\tfrac{\alpha_{k}}{\mu}\nabla f^{a^{l}_{k,\hat{j}}}\left(x^{k}\right)^{\top}v_{l}^{k} \right\} \\
     & \hspace{3cm}+  \left\{ \tfrac{\alpha_{k}}{\mu}\nabla f^{a^{l}_{k,\hat{j}}}\left(x^{k}\right)^{\top}v_{l}^{k} +g^{a^{l}_{k,\hat{j}}}\left(x^{k}+\tfrac{\alpha_{k}}{\mu} v_{l}^{k}\right)- g^{a^{l}_{k,\hat{j}}}\left(x^{k}\right)  \right\}\\
     &  -\rho\left\{ \tfrac{\alpha_{k}}{\mu}\left(\nabla f^{a^{l}_{k,\hat{j}}}\left(x^{k}\right)^{\top}v_{l}^{k}+g^{a^{l}_{k,\hat{j}}}\left(x^{k}+v_{l}^{k}\right)- g^{a^{l}_{k,\hat{j}}}\left(x^{k}\right)\right)\right\}\npreceq_{C} 0\\
     \overset{\eqref{convex_direction_deri}}{\implies}  &  \left\{f^{a^{l}_{k,\hat{j}}}\left(x^{k}+\tfrac{\alpha_{k}}{\mu} v_{l}^{k}\right)- f^{a^{l}_{k,\hat{j}}}\left(x^{k}\right) -\tfrac{\alpha_{k}}{\mu}\nabla f^{a^{l}_{k,\hat{j}}}\left(x^{k}\right)^{\top}v_{l}^{k} \right\}\\
     & \hspace{3cm}+ \left\{\tfrac{\alpha_{k}}{\mu}(1-\rho)\left(\nabla f^{a^{l}_{k,\hat{j}}}\left(x^{k}\right)^{\top}v_{l}^{k}+g^{a^{l}_{k,\hat{j}}}\left(x^{k}+v_{l}^{k}\right)- g^{a^{l}_{k,\hat{j}}}\left(x^{k}\right)\right)\right\}\npreceq_{C} 0.
    \end{align*}
From Lemma \ref{scal_func}(ii) and (i), we have
\begin{align*}
    \phi \Big(&  \left\{f^{a^{l}_{k,\hat{j}}}\left(x^{k}+\tfrac{\alpha_{k}}{\mu} v_{l}^{k}\right)- f^{a^{l}_{k,\hat{j}}}\left(x^{k}\right) -\tfrac{\alpha_{k}}{\mu}\nabla f^{a^{l}_{k,\hat{j}}}\left(x^{k}\right)^{\top}v_{l}^{k} \right\}\\
     & \hspace{3cm}+ \left\{\tfrac{\alpha_{k}}{\mu}(1-\rho)\left(\nabla f^{a^{l}_{k,\hat{j}}}\left(x^{k}\right)^{\top}v_{l}^{k}+g^{a^{l}_{k,\hat{j}}}\left(x^{k}+v_{l}^{k}\right)- g^{a^{l}_{k,\hat{j}}}\left(x^{k}\right)\right)\right\}\Big)>0\\
     \implies \phi&  \left(f^{a^{l}_{k,\hat{j}}}\left(x^{k}+\tfrac{\alpha_{k}}{\mu} v_{l}^{k}\right)- f^{a^{l}_{k,\hat{j}}}\left(x^{k}\right) -\tfrac{\alpha_{k}}{\mu}\nabla f^{a^{l}_{k,\hat{j}}}\left(x^{k}\right)^{\top}v_{l}^{k} \right)\\
     & \hspace{3cm}+ \tfrac{\alpha_{k}}{\mu}(1-\rho)\phi\left(\nabla f^{a^{l}_{k,\hat{j}}}\left(x^{k}\right)^{\top}v_{l}^{k}+g^{a^{l}_{k,\hat{j}}}\left(x^{k}+v_{l}^{k}\right)- g^{a^{l}_{k,\hat{j}}}\left(x^{k}\right)\right)>0,
\end{align*}
 because $\alpha_{k}>0$, $\mu>0$, and $\rho <1$.  Since $\nabla f^{j}$ is Lipschitz continuous for each $j \in [p]$ with Lipschitz constant $L^{j}$, and $L := \max\{ L^{j} \mid j \in [p] \}$, it follows from Lemma~\ref{descent lemma} that
 \[ \tfrac{L}{2}\left \|\tfrac{\alpha_{k}}{\mu}v_{l}^{k}\right\|^{2} +\frac{\alpha_{k}}{\mu}(1-\rho)\phi\left(\nabla f^{a^{l}_{k,\hat{j}}}\left(x^{k}\right)^{\top}v_{l}^{k}+g^{a^{l}_{k,\hat{j}}}\left(x^{k}+v_{l}^{k}\right)- g^{a^{l}_{k,\hat{j}}}\left(x^{k}\right)\right)>0. \]
Since $\hat{j}\in [\omega_{k}]$, the following relation holds
 \[ \tfrac{L}{2}\left \|\tfrac{\alpha_{k}}{\mu}v_{l}^{k}\right\|^{2} +\frac{\alpha_{k}}{\mu}(1-\rho)\underset{j\in [\omega_{k}]}{\max}\phi\left(\nabla f^{a^{l}_{k,j}}\left(x^{k}\right)^{\top}v_{l}^{k}+g^{a^{l}_{k,j}}\left(x^{k}+v_{l}^{k}\right)- g^{a^{l}_{k,j}}\left(x^{k}\right)\right)>0. \]
 From the relation \eqref{R_eqv_I}, it follows that
 \begin{align*}
     \tfrac{L}{2}\left \|\tfrac{\alpha_{k}}{\mu}v_{l}^{k}\right\|^{2} +\frac{\alpha_{k}}{\mu}(1-\rho)\left(-\tfrac{l}{2}\left\|v^{k}_{l}\right\|^{2}\right)>0
     \implies  \alpha_{k}> \tfrac{(1-\rho)\mu l}{L}.
 \end{align*}
 This concludes the proof.
\end{proof}

\begin{theorem}\label{T_com_ana_1}
     Consider the problem \eqref{sop}, where the map $H$ satisfies \emph{Assumption~\ref{Ass_label_bdd}}. Assume that $\nabla f^{j}$ is Lipschitz continuous with Lipschitz constant $L^{j}$ for every $j \in [p]$ in the objective map $H$, and denote $L := \max\{ L^{j} \mid j \in [p] \}$. Let $\{x^{k}\}$ be a sequence of non-stationary points generated by \emph{Algorithm \ref{algo 1}}. Then, for any $\hat{k} \in \mathbb{N}$, there exists an iteration $k \in \{0,1,\dots,\hat{k}\}$ such that 
     \[
\lvert\Theta_{l}\left(x^{k}\right)\rvert<  \frac{\mathcal{C}_{1}}{\hat{k}},
\]
where $\mathcal{C}_{1}>0$ is a constant.
\end{theorem}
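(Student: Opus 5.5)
The plan is a telescoping argument built on the sufficient-decrease inequality already derived in the proof of Theorem~\ref{Con_algo1}, combined with the uniform step-size bound of Lemma~\ref{lemma_step_size}.

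\textbf{Per-iteration decrease.} At each non-stationary iterate $x^{k}$ the Armijo condition of Step~4, together with Lemma~\ref{scal_func}(i),(ii) and \eqref{Mo_zeta}, gives
\[
\xi\circ H\left(x^{k+1}\right)\leq \xi\circ H\left(x^{k}\right)+\rho\,\alpha_{k}\,\mathcal{F}^{k}\left(v^{k}_{l}\right),
\]
exactly as in that proof. Since $\Theta_{l}(x^{k})=\mathcal{F}^{k}(v^{k}_{l})+\tfrac{l}{2}\|v^{k}_{l}\|^{2}$, we have $\mathcal{F}^{k}(v^{k}_{l})\leq \Theta_{l}(x^{k})=-\lvert\Theta_{l}(x^{k})\rvert$. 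Hence
\[
\rho\,\alpha_{k}\lvert\Theta_{l}(x^{k})\rvert\leq \xi\circ H(x^{k})-\xi\circ H(x^{k+1}).
\]

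\textbf{Uniform step-size bound.} Lemma~\ref{lemma_step_size} gives $\alpha_{k}\geq \bar{\alpha}:=\min\{(1-\rho)\mu l/L,1\}>0$ for every $k$. Substituting, each term satisfies $\rho\bar{\alpha}\lvert\Theta_{l}(x^{k})\rvert\leq \xi\circ H(x^{k})-\xi\circ H(x^{k+1})$.

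\textbf{Telescoping and a uniform lower bound.} Sum over $k=0,\dots,\hat{k}$ and telescope to obtain
\[
\rho\bar{\alpha}\sum_{k=0}^{\hat{k}}\lvert\Theta_{l}(x^{k})\rvert\leq \xi\circ H(x^{0})-\xi\circ H(x^{\hat{k}+1}).
\]
By Theorem~\ref{LS_Theorem}(ii) the sets $S_{k}:=H(x^{k})$ lie in $H(\mathcal{L})$ and are $\preceq^{\ell}$-decreasing. Assumption~\ref{Ass_label_bdd} therefore supplies a bounded set $S$ with $S\preceq^{\ell}H(x^{k})$ for all $k$. The monotonicity of $\xi$ then gives $\xi\circ H(x^{\hat{k}+1})\geq \xi(S)$, and $\xi(S)>-\infty$ because $S$ is bounded and $\phi$ is Lipschitz. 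Consequently
\[
(\hat{k}+1)\min_{0\leq k\leq\hat{k}}\lvert\Theta_{l}(x^{k})\rvert\leq \frac{\xi\circ H(x^{0})-\xi(S)}{\rho\bar{\alpha}}.
\]

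\textbf{Conclusion and main obstacle.} Fix any $\mathcal{C}_{1}>(\xi\circ H(x^{0})-\xi(S))/(\rho\bar{\alpha})$; this guarantees $\mathcal{C}_{1}>0$ and makes the inequality strict. Since $\hat{k}+1>\hat{k}$, we get $\min_{0\leq k\leq\hat{k}}\lvert\Theta_{l}(x^{k})\rvert<\mathcal{C}_{1}/\hat{k}$, which is the claim.

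The main obstacle is the first step: passing from the vector Armijo inequalities on the active indices $a^{l}_{k,j}$ to a scalar decrease of $\xi\circ H$. The bound $\min_j\phi(u_j+w_j)\leq\min_j\phi(u_j)+\max_j\phi(w_j)$ holds by sub-linearity of $\phi$. One must also check that $\xi\circ H(x^{k})$ equals the minimum of $\phi$ over the $\omega_{k}$ minimal elements indexed by $a^{l}_{k}$, which is \eqref{Mo_zeta}, and that $\xi\circ H(x^{k+1})$ is at most the minimum over the subset $\{h^{a^{l}_{k,j}}(x^{k+1})\}_{j}\subseteq H(x^{k+1})$, which holds because $\xi$ is an infimum. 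Both facts are already implicit in the proof of Theorem~\ref{Con_algo1}, so I would reuse that argument verbatim.
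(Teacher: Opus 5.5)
Your proposal is correct and is essentially the paper's proof. The Armijo condition of Step~4, together with Lemma~\ref{scal_func}(i),(ii) and \eqref{Mo_zeta}, gives a per-iteration decrease of $\xi\circ H$ of at least $\rho\,\alpha_{k}\lvert\Theta_{l}(x^{k})\rvert$. Lemma~\ref{lemma_step_size} then replaces $\alpha_{k}$ by the uniform bound $\min\{(1-\rho)\mu l/L,1\}$, and telescoping against $\xi(S)$ from Assumption~\ref{Ass_label_bdd} yields the $\mathcal{C}_{1}/\hat{k}$ estimate.

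The differences are minor. You sum up to $\hat{k}$ and get strictness by enlarging $\mathcal{C}_{1}$, whereas the paper sums to $\hat{k}-1$ and uses $\Theta_{l}(x^{k})>\mathcal{F}^{k}(v^{k}_{l})$ for $v^{k}_{l}\neq 0$. You derive the decrease via $\min_j\phi(u_j+w_j)\le\min_j\phi(u_j)+\max_j\phi(w_j)$, as in Theorem~\ref{Con_algo1}. This avoids the paper's intermediate bound through $\max_j\phi\bigl(h^{a^{l}_{k,j}}(x^{k+1})\bigr)-\min_j\phi\bigl(h^{a^{l}_{k,j}}(x^{k})\bigr)$, which does not hold in general, although the resulting inequality in terms of $\xi\circ H$ is true.
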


\begin{proof}
   Note that $x^{k}\in \mathrm{dom}(H)$ and is a non-stationary point. Therefore, from Step 4 of Algorithm \ref{algo 1}, for all $j\in [\omega_{k}]$, we get the following
   \begin{align*}
      &  h^{a^{l}_{k,j}}\left(x^{k+1}\right)-h^{a^{l}_{k,j}}\left(x^{k}\right)\preceq_{C} \rho \,\alpha_{k}\left(\nabla f^{a^{l}_{k,j}}\left(x^{k}\right)^{\top}v_{l}^{k}+g^{a^{l}_{k,j}}\left(x^{k}+v_{l}^{k}\right)- g^{a^{l}_{k,j}}\left(x^{k}\right)\right)\\
      \implies & \phi \left(h^{a^{l}_{k,j}}\left(x^{k+1}\right)-h^{a^{l}_{k,j}}\left(x^{k}\right)\right)\leq  \rho \,\alpha_{k}\phi \left(\nabla f^{a^{l}_{k,j}}\left(x^{k}\right)^{\top}v_{l}^{k}+g^{a^{l}_{k,j}}\left(x^{k}+v_{l}^{k}\right)- g^{a^{l}_{k,j}}\left(x^{k}\right)\right) 
   \end{align*}
   because of Lemma \ref{scal_func}(ii) and (i). Therefore, it follows that
   \begin{align*}
     & \rho \,\alpha_{k} \underset{j\in [\omega_{k}]}{\max}\phi \left(\nabla f^{a^{l}_{k,j}}\left(x^{k}\right)^{\top}v_{l}^{k}+g^{a^{l}_{k,j}}\left(x^{k}+v_{l}^{k}\right)- g^{a^{l}_{k,j}}\left(x^{k}\right)\right)\\
     \geq &~ \underset{j\in [\omega_{k}]}{\max}\phi \left(h^{a^{l}_{k,j}}\left(x^{k+1}\right)-h^{a^{l}_{k,j}}\left(x^{k}\right)\right) \geq ~\underset{j\in [\omega_{k}]}{\max}\phi \left(h^{a^{l}_{k,j}}\left(x^{k+1}\right)\right)-\underset{j\in [\omega_{k}]}{\min}\phi \left(h^{a^{l}_{k,j}}\left(x^{k}\right)\right)
   \end{align*}
   from Lemma \ref{scal_func} (i). Accordingly, applying the function $\xi$ defined in~\eqref{zeta} and the equation \eqref{Mo_zeta}, we get
   \begin{align*}
        &   \rho\, \alpha_{k} \underset{j\in [\omega_{k}]}{\max}\phi \left(\nabla f^{a^{l}_{k,j}}\left(x^{k}\right)^{\top}v_{l}^{k}+g^{a^{l}_{k,j}}\left(x^{k}+v_{l}^{k}\right)- g^{a^{l}_{k,j}}\left(x^{k}\right)\right)\\
        \geq&
        ~ \underset{j\in [\omega_{k}]}{\max}\phi \left(h^{a^{l}_{k,j}}\left(x^{k+1}\right)\right)-\xi \circ H\left(x^{k}\right).
        \end{align*}
        This implies that 
    \[\underset{j\in [\omega_{k}]}{\max}\phi \left(\nabla f^{a^{l}_{k,j}}\left(x^{k}\right)^{\top}v_{l}^{k}+g^{a^{l}_{k,j}}\left(x^{k}+v_{l}^{k}\right)- g^{a^{l}_{k,j}}\left(x^{k}\right)\right)\geq \tfrac{1}{\rho\,\alpha_{k}}\left\{\xi \circ H\left(x^{k+1}\right)-\xi \circ H\left(x^{k}\right)\right\}.\] 
  Since $x^{k}$ is a non-stationary point, by the definition of $\Theta_{l}$, we obtain
   \begin{align*}
     & \Theta_{l}\left(x^{k}\right)> \tfrac{1}{\rho\,\alpha_{k}}\left\{\xi \circ H\left(x^{k+1}\right)-\xi \circ H\left(x^{k}\right)\right\}\\
     \overset{\eqref{step_size-bound}}{\implies} &  \Theta_{l}\left(x^{k}\right)> \tfrac{1}{\rho~\alpha_{\mathrm{min}}}\left\{\xi \circ H\left(x^{k+1}\right)-\xi \circ H\left(x^{k}\right)\right\} ,
   \end{align*}
  where $\alpha_{\mathrm{min}}:=\min\left\{\tfrac{(1-\rho)\mu l}{L},1\right\}.$ Accordingly, we have
   \begin{align*}
&\sum_{k=0}^{\hat{k}-1}\Theta_{l}\left(x^{k}\right) \geq  \tfrac{1}{\rho~\alpha_{\mathrm{min}}}\sum_{k=0}^{\hat{k}-1}\left\{\xi \circ H\left(x^{k+1}\right)-\xi \circ H\left(x^{k}\right)\right\}=\tfrac{1}{\rho\,\alpha_{\mathrm{min}}}\left\{\xi \circ H\left(x^{\hat{k}}\right)-\xi \circ H\left(x^{0}\right)\right\}\\
   \implies & \hat{k}\underset{0\leq k \leq \hat{k}-1}{\max}\Theta_{l}\left(x^{k}\right)\geq \tfrac{1}{\rho~\alpha_{\mathrm{min}}}\left\{\xi \circ H\left(x^{\hat{k}}\right)-\xi \circ H\left(x^{0}\right)\right\}\overset{\eqref{lower_bdd_H}}{\geq} \tfrac{1}{\rho\,\alpha_{\mathrm{min}}}\left\{\xi \circ S-\xi \circ H\left(x^{0}\right)\right\}\\
    \implies & \underset{0\leq k \leq \hat{k}-1}{\max}\Theta_{l}\left(x^{k}\right)\geq -\frac{\mathcal{C}_{1}}{\hat{k}},   
   \end{align*}
 where $\mathcal{C}_{1}:=\tfrac{1}{\rho\,\alpha_{\mathrm{min}}}\left\{\xi \circ H\left(x^{0}\right)-\xi \circ S\right\}>0$. Consequently, there exists $k \in \{0,1,\dots,\hat{k}\}$ for which 
$\lvert\Theta_{l}\left(x^{k}\right)\rvert < \frac{\mathcal{C}_{1}}{\hat{k}}$ holds.
\end{proof}

We now present the complexity analysis of Algorithm~\ref{algo} in the following result.\\

\begin{theorem}\label{T_com_ana_2}
   Consider the problem~\eqref{sop}, where the map $H$ satisfies \emph{Assumption~\ref{Ass_label_bdd}}.  
Assume that $\nabla f^{j}$ is Lipschitz continuous with Lipschitz constant $L^{j}$ for every $j \in [p]$, and denote $L := \max\{ L^{j} \mid j \in [p] \}$.  
Let the scalar $l\geq L$ in $\Theta_{l}$ as defined in~\eqref{opt_val} and $\{x^{k}\}$ be a sequence of non-stationary points generated by \emph{Algorithm~\ref{algo}}.  
Then, for any $\hat{k} \in \mathbb{N}$, there exists some $k \in \{0,1,\dots,\hat{k}\}$ satisfying  
$$\lvert\Theta_{l}\left(x^{k}\right)\rvert < \frac{\mathcal{C}_{2}}{\hat{k}},$$
where $\mathcal{C}_{2}>0$ is a constant.
\end{theorem}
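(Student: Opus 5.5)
The plan mirrors the proof of Theorem~\ref{T_com_ana_1}, with the unit step $x^{k+1}=x^{k}+v^{k}_{l}$ in place of the Armijo step. The aim is to show that each iteration decreases $\xi\circ H$ by at least a fixed multiple of $-\Theta_{l}(x^{k})$, and then to telescope.

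\textbf{Step 1: a one-step bound in terms of $\Theta_{l}$.} For every $j\in[\omega_k]$, Lemma~\ref{scal_func}(i) and Lemma~\ref{descent lemma} give
\[
\phi\left(h^{a^{l}_{k,j}}(x^{k+1})-h^{a^{l}_{k,j}}(x^{k})\right)\leq \frac{L}{2}\|v^{k}_{l}\|^{2}+\phi\left(\nabla f^{a^{l}_{k,j}}(x^{k})^{\top}v^{k}_{l}+g^{a^{l}_{k,j}}(x^{k}+v^{k}_{l})-g^{a^{l}_{k,j}}(x^{k})\right),
\]
as in the proof of Proposition~\ref{w_line_mono}. Taking the maximum over $j$ and using $\Theta_{l}(x^{k})=\mathcal{F}^{k}(v^{k}_{l})+\frac{l}{2}\|v^{k}_{l}\|^{2}$, the right-hand side becomes
\[
\Theta_{l}(x^{k})+\frac{L-l}{2}\|v^{k}_{l}\|^{2}.
\]
Since $l\geq L$, the last term is nonpositive. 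Hence
\[
\max_{j\in[\omega_k]}\phi\left(h^{a^{l}_{k,j}}(x^{k+1})-h^{a^{l}_{k,j}}(x^{k})\right)\leq \Theta_{l}(x^{k}).
\]
This is where the hypothesis $l\geq L$, rather than $l>L/2$, is used.

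\textbf{Step 2: passing to $\xi\circ H$.} Exactly as in Theorems~\ref{con_alg2} and~\ref{T_com_ana_1}, Lemma~\ref{scal_func}(i) gives
\[
\max_j\phi\left(h^{a^{l}_{k,j}}(x^{k+1})\right)-\min_j\phi\left(h^{a^{l}_{k,j}}(x^{k})\right)\leq \Theta_{l}(x^{k}).
\]
By \eqref{Mo_zeta}, the minimum term equals $\xi\circ H(x^{k})$. By the definition of $\xi$, the maximum term is at least $\xi\circ H(x^{k+1})$, since the points $h^{a^{l}_{k,j}}(x^{k+1})$ lie in $H(x^{k+1})$. Therefore
\[
\xi\circ H(x^{k+1})-\xi\circ H(x^{k})\leq \Theta_{l}(x^{k})<0.
\]

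\textbf{Step 3: telescoping.} Sum this inequality for $k=0,\dots,\hat{k}-1$. Proposition~\ref{w_line_mono} gives $H(x^{k+1})\prec^{\ell}H(x^{k})$, so every iterate lies in $\mathcal{L}$ and Assumption~\ref{Ass_label_bdd} applies. It yields a bounded set $S$ with $\xi\circ H(x^{\hat{k}})\geq \xi\circ S$. We obtain
\[
\hat{k}\,\min_{0\leq k\leq \hat{k}-1}\lvert\Theta_{l}(x^{k})\rvert\leq \sum_{k=0}^{\hat{k}-1}\lvert\Theta_{l}(x^{k})\rvert\leq \xi\circ H(x^{0})-\xi\circ S=:\mathcal{C}_{2}.
\]
This gives some $k$ with $\lvert\Theta_{l}(x^{k})\rvert\leq \mathcal{C}_{2}/\hat{k}$. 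The bound is strict because each $\Theta_{l}(x^{k})<0$ strictly and the chain of inequalities is strict, or the constant can be enlarged slightly.

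\textbf{Main obstacle.} The delicate point is the bookkeeping in Step 2. The maximum over the partition-indexed images at $x^{k+1}$ must bound $\xi\circ H(x^{k+1})$ from above, while the minimum at $x^{k}$ must equal $\xi\circ H(x^{k})$ through \eqref{Mo_zeta}. We also need $\xi\circ S$ to be finite, which follows from the boundedness of $S$ and the Lipschitz continuity of $\phi$ in Lemma~\ref{scal_func}(iii). A further check is that $\mathcal{C}_{2}>0$, which holds because the strict decrease gives $\xi\circ H(x^{\hat{k}})<\xi\circ H(x^{0})$.
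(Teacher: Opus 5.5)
Your proposal follows the paper's own argument: a one-step bound via Lemma~\ref{descent lemma} and $l\ge L$, then telescoping $\xi\circ H$ against Assumption~\ref{Ass_label_bdd}. Steps 1 and 3 are sound. The problem is the intermediate inequality you borrow in Step 2,
\[
\max_{j\in[\omega_k]}\phi\bigl(h^{a^{l}_{k,j}}(x^{k+1})\bigr)-\min_{j\in[\omega_k]}\phi\bigl(h^{a^{l}_{k,j}}(x^{k})\bigr)\le\Theta_{l}(x^{k}).
\]
It does not follow from Lemma~\ref{scal_func}(i) and is false in general. The paper's proof makes the same slip, as do the proofs of Theorems~\ref{con_alg2} and~\ref{T_com_ana_1}. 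Sublinearity only gives the bound index by index: $\phi(h^{a^{l}_{k,j}}(x^{k+1}))\le\phi(h^{a^{l}_{k,j}}(x^{k}))+\Theta_{l}(x^{k})$ for each $j$. Pairing the maximum at $x^{k+1}$ with the minimum at $x^{k}$ would force every new value below the smallest old one.

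Concretely, take $n=1$, $m=p=2$, $C=\mathbb{R}^{2}_{+}$, $g^{1}=g^{2}=0$, $f^{1}(x)=(\tfrac{x^{2}}{2},\,10+\tfrac{x^{2}}{2})$, $f^{2}(x)=(1+\tfrac{x^{2}}{2},\,\tfrac{x^{2}}{2})$, $l=2\ge L=\sqrt{2}$, and $x^{0}=1$. All hypotheses hold: Assumption~\ref{Ass_label_bdd} holds with $S=H(0)$, and the iterates $x^{k}=2^{-k}$ are non-stationary. Here $v^{0}_{l}=-\tfrac12$ and $\Theta_{l}(x^{0})=-\tfrac14$. Yet the left-hand side equals $\phi(\tfrac18,\tfrac{81}{8})-\phi(\tfrac32,\tfrac12)\approx 10.13-1.58>8$.

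The inequality you actually need is still true if you work with a single index. Let $j^{*}\in[\omega_k]$ attain $\min_{j}\phi(h^{a^{l}_{k,j}}(x^{k}))$, which equals $\xi\circ H(x^{k})$ by \eqref{Mo_zeta}. Then Lemma~\ref{scal_func}(i) and your Step 1 give
\[
\xi\circ H(x^{k+1})\le\phi\bigl(h^{a^{l}_{k,j^{*}}}(x^{k+1})\bigr)\le\phi\bigl(h^{a^{l}_{k,j^{*}}}(x^{k})\bigr)+\Theta_{l}(x^{k})=\xi\circ H(x^{k})+\Theta_{l}(x^{k}),
\]
and Step 3 then goes through unchanged.

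A minor point on the strict inequality: negativity of each $\Theta_{l}(x^{k})$ does not by itself make your chain strict. Either enlarge $\mathcal{C}_{2}$ (e.g.\ double it), or note that $\xi\circ S\le\xi\circ H(x^{\hat{k}+1})\le\xi\circ H(x^{\hat{k}})+\Theta_{l}(x^{\hat{k}})<\xi\circ H(x^{\hat{k}})$, since $x^{\hat{k}}$ is non-stationary.
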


\begin{proof}
    In view of Lemma \ref{scal_func}(i), for any $j\in [p]$, we have
    \begin{align*}
        \phi\left(h^{j}\left(x^{k+1}\right)-h^{j}\left(x^{k}\right)\right)&= \phi\Big(\left\{f^{j}\left(x^{k+1}\right)-f^{j}\left(x^{k}\right)- \nabla f^{j}\left(x^{k}\right)^{\top}(x^{k+1}-x^{k})\right\} \\ 
        &\hspace{2.5cm} + \left\{ \nabla f^{j}\left(x^{k}\right)^{\top}(x^{k+1}-x^{k})+ g^{j}\left(x^{k+1}\right)-g^{j}\left(x^{k}\right) \right\}\Big)\\
        &\leq \phi\left(f^{j}\left(x^{k+1}\right)-f^{j}\left(x^{k}\right)- \nabla f^{j}\left(x^{k}\right)^{\top}(x^{k+1}-x^{k})\right) \\ 
        &\hspace{2.5cm} + \phi
        \left( \nabla f^{j}\left(x^{k}\right)^{\top}(x^{k+1}-x^{k})+ g^{j}\left(x^{k+1}\right)-g^{j}\left(x^{k}\right) \right)\\
           &\leq \phi\left(\nabla f^{j}\left(x^{k}\right)^{\top}(x^{k+1}-x^{k})+g^{j}\left(x^{k+1}\right)-g^{j}\left(x^{k}\right) \right)+\frac{L}{2}\|x^{k+1}-x^{k} \|^{2}
    \end{align*}
    because of Lemma \ref{descent lemma}. Accordingly, for the partition element $a_{k}\in P_{k}$, it follows that
    \begin{align*}
      &\underset{j\in [\omega_{k}]}{\max}\,\phi\left(h^{a^{l}_{k,j}}\left(x^{k+1}\right)-h^{a^{l}_{k,j}}\left(x^{k}\right)\right)\\
      \leq &~ \underset{j\in [\omega_k]}{\max}\,\phi\left(\nabla f^{a^{l}_{k,j}}\left(x^{k}\right)^{\top}(x^{k+1}-x^{k})+g^{a^{l}_{k,j}}\left(x^{k+1}\right)-g^{a^{l}_{k,j}}\left(x^{k}\right)\right) +\frac{L}{2}\|x^{k+1}-x^{k} \|^{2}\\
      \leq &~ \Theta_{l}\left(x^{k}\right)
    \end{align*}
    from the definition of $\Theta_{l}$ with the fact that $l\geq L$. Therefore, by Lemma~\ref{scal_func}(i), together with the definition of the function $\xi$ in~\eqref{zeta} and the relation in~\eqref{Mo_zeta}, we obtain

    \begin{align*}
    &\Theta_{l}\left(x^{k}\right)\geq \underset{j\in [\omega_{k}]}{\max}\,\phi(h^{a^{l}_{k,j}}\left(x^{k+1}\right))-\underset{j\in [\omega_{k}]}{\min}~\phi(h^{a^{l}_{k,j}}\left(x^{k}\right))\geq \xi \circ H\left(x^{k+1}\right)-\xi \circ H\left(x^{k}\right)\\
     \implies & \sum_{k=0}^{\hat{k}-1}\Theta_{l}\left(x^{k}\right) \geq \sum_{k=0}^{\hat{k}-1}\left\{\xi \circ H\left(x^{k+1}\right)-\xi \circ H\left(x^{k}\right)\right\}= \xi \circ H\left(x^{\hat{k}}\right)-\xi \circ H\left(x^{0}\right)\\
        \implies & \hat{k}\underset{0\leq k\leq \hat{k}-1}{\max}\Theta_{l}\left(x^{k}\right) \geq \xi \circ H\left(x^{\hat{k}}\right)-\xi \circ H\left(x^{0}\right)\overset{\eqref{lower_bdd_H}}{\geq} \xi \circ S-\xi \circ H\left(x^{0}\right) \\
        \implies & \underset{0\leq k\leq \hat{k}-1}{\max}\Theta_{l}\left(x^{k}\right)\geq -\frac{\mathcal{C}_{2}}{\hat{k}},
    \end{align*} 
   where $\mathcal{C}_{2}:=\xi \circ H\left(x^{0}\right)-\xi \circ S>0$. This implies that for some $k \in \{0,1,\ldots,\hat{k}\}$, we have  
$\lvert\Theta_{l}\left(x^{k}\right)\rvert \leq \frac{\mathcal{C}_{2}}{ \hat{k}}$.

\end{proof}

Note that convergence rate of the sequence $\{\|v^{k}_{l}\|\}$
is used as a rate of convergence of proximal-type methods. Accordingly, from Theorems~\ref{T_com_ana_1} and~\ref{T_com_ana_2}, the convergence rate of the proposed algorithms with certain commonly used assumptions is $\mathcal{O}(1/\sqrt{k})$.

\section{Numerical results and illustrations}\label{sec.6}  

In this section, we present the numerical performance of the proposed algorithms, viz. Algorithm~\ref{algo 1} (abbreviated as Algo \ref{algo 1}) and Algorithm~\ref{algo} (abbreviated as Algo~\ref{algo}), to evaluate the practical efficiency and assess their relative performance. All performances are conducted in MATLAB R2023b on a Laptop equipped with an Intel Core i5-11320H CPU (3.20 GHz), 8 GB RAM, and the Windows 11 operating system. In the following, we first discuss the choice of objective functions for the problem~\eqref{sop}, which are used to evaluate the numerical performance. We then provide implementation details for both methods, and finally, we present and analyze the numerical results.

\begin{framed}
\noindent
Readers may think that ``the numerical comparisons of the proposed methods should be done with the current state-of-the-art for set optimization methods.'' To answer this question, it is noteworthy to observe that in the existing literature, there is no numerical method to solve a non-smooth set optimization problem such as \eqref{sop}. Accordingly, we have restricted our comparisons to their relative performance.

\end{framed}

\subsection{Problem formulation}
The objective function $H$ of the problem~\eqref{sop} is constructed by using the sum of two functions, $f^{j}+g^{j}$, where $f^{j}$ is continuously differentiable with $\nabla f^{j}$ is Lipschitz continuous and $g^{j}$ is a proper, closed, and convex function, for each $j\in [p]$. For the choice of the function $f^{j}$, we adopt two approaches. In the first approach, $f^{j}$ is taken directly from some test problems for set optimization given in Table \ref{Test_Problems}. In the second approach, we start with the objective function $\mathscr{F}$ from a vector optimization test problem given in Table \ref{Test_Problems} and define a collection of functions $F^{j}:\mathbb{R}^{n}\to \mathbb{R}^{m}$, $j\in [p]$, then combine them as $f^{j}(x):=\mathscr{F}(x)+F^{j}(x)$.


\begin{table}[ht]
\centering
\begin{tabular}{ l c l l l l }
\toprule  
 {Problem} & {Reference } &  {$m$} &  {$n$} &  {$lb^{\top}$} &  {$ub^{\top}$} \\ 
 \midrule 
AP1        &  \cite{AP-2015}      &3      & 2       &   $(-10,-10)$         & $(10,10)$         \\ 
AP3        &  \cite{AP-2015}      &2      &2        &   $(-10,-10)$         & $(10,10)$         \\ 
AP4        &   \cite{AP-2015}     &3      &3        &  $(-10,-10,-10)$      & $(10,10,10)$         \\ 
BK1        &  \cite{NE_36}      & 2     & 2       & $(-5,-5)$             & $(10,10)$         \\ 
DD1        &  \cite{DD-1998}      & 2     & 5       & $(-20,-20,\ldots,-20)$ & $(20,20,\ldots,20)$          \\ 
DTLZ1      &   \cite{DTLZ-2005}     & 3     &  5      &   $(0,0,\ldots,0)$    & $(1,1,\ldots,1)$         \\ 
FDS        & \cite{NE_20}       & 3     &  5      &   $(-2,-2)$           & $(2,2)$         \\ 
GAAZ7       & \cite{ghosh2024newton} & 2   & 2   & $(-\pi,-\pi)$        & $(\pi,\pi)$           \\ 
GEC1       & \cite{steepest2021set_optimization}      & 2     & 1       &   $-5\pi$             & $5\pi$         \\ 
GEC2       &  \cite{steepest2021set_optimization}      & 3     & 2       &   $(-50,-50)$          & $(50,50)$         \\ 
GEC3       &   \cite{steepest2021set_optimization}     & 2     & 2       &   $(-10\pi,-10\pi)$    & $(10\pi,10\pi)$         \\
GRPY2      & \cite{Ghosh CGM} & 2   & 2   & $(-\pi,-\pi)$      & $(\pi,\pi)$ \\ 

Hil1       &   \cite{NE_34}     & 2      & 2        &   $(0,0)$               & $(1,1)$         \\ 
IKK1       &   \cite{NE_36}     & 3      & 2        &   $(-50,-50)$           & $(50,50)$         \\ 
JOS1       &   \cite{NE_38}     & 2      & 10,~100  &   $(-2,-2,\ldots,-2)$    & $(2,2,\ldots,2)$         \\
KW2        &  \cite{KW-2005}      & 2      & 2        &   $(-3,-3)$              & $(3,3)$         \\  
Lov5       &  \cite{Lov-2011}      & 2      & 3        &   $(-2,-2)$              & $(2,2)$         \\  
MOP1       &  \cite{NE_36}      & 2      & 1        &   $-10^{5}$              & $10^{5}$         \\ 
MOP7       & \cite{NE_36}       & 3      & 2        &   $(-400,-400)$           & $(400,400)$         \\ 
PNR        &  \cite{PNR-2006}      & 2      & 2        &   $(-2,-2)$               & $(2,2)$         \\ 
SD         &  \cite{SD_1992}      & 2      & 4        &   $(1,\sqrt{2},\sqrt{2},1)$     & $(3,3,3,3)$         \\ 
Toi4       & \cite{Toint1983}       & 2      & 4        &   $(-2,-2,\ldots,-2)$     & $(5,5,\ldots,5)$         \\ 
TRIDIA     & \cite{Toint1983}       & 3      & 3        &   $(-1,-1,-1)$            & $(1,1,1)$         \\ 
VU2        &    \cite{NE_36}    & 2      & 2        &   $(-3,-3)$               & $(3,3)$         \\ 
 \bottomrule
\end{tabular}
\caption{Test Problems}\label{Test_Problems} 
\end{table}

In the following, we define two collections of functions. 

\noindent
\textbf{SVM1:} For each $j \in [p]$, define $F^{j}:\mathbb{R}^{n} \to \mathbb{R}^{2}$  
as 
$F^{j}(x):=\left(F^{j}_{1}(x),\, F^{j}_{2}(x)\right)^{\top}$
with
\begin{align*}
   & F^{j}_{1}(x):=\sum_{k=0}^{n-1}\tfrac{1}{2^{k}}\left(\cos\!\left(x+\tfrac{2\pi(j-1)}{p}\right) 
+ \sin\!\left(x+\tfrac{2\pi(j-1)}{p}\right)\right),\\
 & F^{j}_{2}(x):=\cos^{2}\!\left(\sum_{i=1}^{n}x_{i}+\tfrac{2\pi(j-1)}{p}\right).
\end{align*}

\noindent 
\textbf{SVM2:} Consider, for each $j \in [p]$, the function $F^{j}:\mathbb{R}^{n} \to \mathbb{R}^{3}$ defined as $F^{j}(x):=\left(F^{j}_{1}(x),\, F^{j}_{2}(x),\, F^{j}_{3}(x)\right)^{\top}$
with
\begin{align*}
F^{j}_{1}(x)&:=\left(1+0.25\sin\!\left(\sum_{i=1}^{n}x_{i}\right)\right)\cos\!\left(\tfrac{2\pi(j-1)}{p}\right), \\[0.5em]
F^{j}_{2}(x)&:=\left(1+0.25\sin\!\left(\sum_{i=1}^{n}x_{i}\right)\right)\sin\!\left(\tfrac{2\pi(j-1)}{p}\right), \\[0.5em]
F^{j}_{3}(x)&:=0.25\cos\!\left(\tfrac{2\pi(j-1)}{p}+0.5\sum_{i=1}^{n}x_{i}\right).
\end{align*}

In view of the function $g^{j},~j\in [p]$, we incorporate uncertainty in each component of $g^{j}$ through a scalar multiple in the component of the decision variable and construct its pessimistic counterpart via the robust minimax formulation. Specifically, we define $g^{j}:\mathbb{R}^{n}\to \mathbb{R}^{m}$ as follows:
\[
g^{j}(x):=\left(g_{1}^{j}(x),~ g_{2}^{j}(x),\ldots,~ g_{m}^{j}(x)\right)^{\top},
\]
where
\[
g_{i}^{j}(x):=\max_{y\in \mathcal{Y}^{j}_{i}} x^{\top}y, \quad 
\mathcal{Y}^{j}_{i}:=\left\{y\in\mathbb{R}^{n} ~\middle|~ -\delta e \leq A^{j}_{i}y \leq \delta e\right\}
\]
with $\delta>0$, $e=(1,1,\ldots,1)^{\top}\in \mathbb{R}^{n}$, and $A^{j}_{i}\in \mathbb{R}^{n \times n}$ a non-singular matrix. 

Note that the functions in the set-valued maps SVM1 and SVM2, as well as the functions $g^{j}$, $j\in [p]$, are all defined on $\mathbb{R}^{n}$.  
Accordingly, we assume that $\mathrm{dom}(H)=[lb,\,ub]$, where $lb$ and $ub$ denote the lower and upper bounds of the decision variable $x$ in the associated map or function used to construct the objective map $H$, as given in Table~\ref{Test_Problems}.  
For simplicity, each problem is referred to by the name of its associated problem listed in Table~\ref{Test_Problems}.

In view of the choices of ordering cones, we adopt two cones as given below:
\[C_{1}:=\left\{(w_{1}, w_{2}, \ldots, w_{m})^{\top}~\middle |~ w_{i}\geq 0 ~\forall i\in [m]\right\} \]
and 
\[C_{2}:=\left\{(w_{1}, w_{2})^{\top}~\middle |~ 10w_{1}-\tfrac{1}{10}w_{2}\geq 0, ~-\tfrac{1}{10}w_{1}+10w_{2}\geq 0 \right\}. \]

\subsection{Implementation details}
In the following, we specify the parameter settings and clarify certain terms required for executing each step of both algorithms. Furthermore, we outline the experimental setup employed to facilitate a comparison between the methods.

\begin{itemize}
    \item We set the parameters $\rho=10^{-4}$ and $\mu=0.5$. The value of $\delta$ is chosen arbitrarily in the range $[0.01,~0.1]$. For each $i\in [m]$ and $j\in [p]$, we choose an arbitrary non-singular matrix $A^{j}_{i}\in \mathbb{R}^{n\times n}$.
\\

    \item To evaluate $g^{j}(x)$ for $j\in [p]$, we reformulate each component $g^{j}_{i}(x)$ as a linear programming problem by considering its dual formulation as follows: consider a primal problem \[\begin{aligned}
        & \max && x^{\top} y\\
        & \text{subject to} && \Tilde{A}^{j}_{i}y\leq \Tilde{b},~y\in \mathbb{R}^{n},
    \end{aligned}\]
    where $\Tilde{A}^{j}_{i}:=\left[A^{j}_{i};~ -A^{j}_{i}\right]\in \mathbb{R}^{2n\times n}$ and $\Tilde{b}:=[\delta e;~-\delta e]\in \mathbb{R}^{2n}$. Then, a corresponding dual problem is given by
    \begin{equation}\label{dual_prob}
         \begin{aligned}
        & \min && \Tilde{b}^{\top} w\\
        & \text{subject to} && \Tilde{A}^{\top}w= x, ~w\in \mathbb{R}_{+}^{2n}.\\
    \end{aligned}
    \end{equation}
    We use Matlab solver \emph{linprog} to solve the problem \eqref{dual_prob}, which enables efficient computation of $g^{j}(x)$.

    \item In order to find the minimal set $\mathcal{M}(H(\bar{y}),C)$, the pair-wise compression of the set $H(\bar{y})$ is used corresponding to the cone $C$.\\

    \item  We fix value of $l=1$ for Algo \ref{algo 1}. For Algo \ref{algo}, where the Lipschitz constant $L$ cannot be easily estimated, the following strategy is adopted. We initialize $l:=1$ and check whether the following relation is satisfied:
    \begin{equation}\label{lip_esti}
      \underset{{j\in [\omega_{k}]}}{\max}  \phi\left(h^{a_{k,j}}\left(x^{k}+v^{k}_{l}\right) - h^{a_{k,j}}\left(x^{k}\right)\right) \leq \Theta_{l}\left(x^{k}\right).  
    \end{equation}
If the relation holds, we accept the current value of $l$; otherwise, we update $l := 2l$ and recheck the relation \eqref{lip_esti}.  
Since $L$ is finite, therefore, in view of the relation \eqref{Prp4.1_eq1}, this procedure is guaranteed to terminate after finitely many steps.
\\

    \item We compute $a^{l}_{k}$ and $v^{k}_{l}$ at the iterate $x^{k}$ as follows. First, for each $a\in P_{k}$, we solve the following problem:
    \begin{equation}\label{sub_prob_a}
        \underset{v\in  \mathbb{R}^{n}}{\min}\left\{\underset{j\in [\omega_k]}{\max}\,\phi\left(\nabla f^{a_{j}}\left(x^{k}\right)^{\top}v+g^{a_{j}}(x^{k}+v)- g^{a_{j}}\left(x^{k}\right)\right)+\frac{l}{2}\|v\|^2\right\}.
    \end{equation}
    Then, we take the minimum over $a\in P_{k}$ and select a minimizer denoted by $a^{l}_{k}$ and the corresponding solution of \eqref{sub_prob_a} is denoted by $v^{k}_{l}$.
 In order to address the problem~\eqref{sub_prob_a}, we scalarize the non-quadratic terms as follows:
    \[\begin{aligned}
        & \min~&&t+\frac{l}{2}\|v\|^{2}\\
        & \text{subject to} && \phi\left(\nabla f^{a_{j}}\left(x^{k}\right)^{\top}v+g^{a_{j}}(x^{k}+v)- g^{a_{j}}\left(x^{k}\right)\right)\leq t \quad \forall j\in [\omega_{k}],\\
        & & & (t,v)\in \mathbb{R}\times \mathbb{R}^{n}.
    \end{aligned}\]
Thereafter, by employing the dual formulation \eqref{dual_prob}, the scalarized problem is reformulated as a quadratic program. We then use Matlab solver \emph{quadprog} to solve this problem. For more details, see \cite[Section 5.2]{tanabe2019proximal}.\\

    \item For the stopping condition of Algorithms~\ref{algo 1} and~\ref{algo}, we use the condition $\lvert\Theta_{l}\left(x^{k}\right)\rvert<10^{-5}$.\\

    \item For each considered problem, we arbitrarily select 100 initial points within the domain $[lb,\,ub]$ specified in Table~\ref{Test_Problems}. The proposed methods are then executed on these same points, using the same choice of $A$ and $b$ for each initial point, to ensure a fair comparison.\\

    \item We employ three metrics to evaluate the performance of the proposed methods:
\begin{itemize}
    \item Iteration count: the number of iterations of the sequence $\{x^{k}\}$, starting from an initial point $x^{0}$, required to satisfy the prescribed stopping condition.\\
    
    \item Computational time: the time taken to reach the stopping condition from a given initial point $x^{0}$.\\
    
    \item Solve: out of 100 arbitrarily chosen initial points, the number that satisfy the stopping condition within 500 iterations. If, for any initial point, a solver fails to find a solution within this limit, the corresponding data on iteration count and time are excluded for that solver, and the attempt is considered unsuccessful.
\end{itemize}


\end{itemize}

\subsection{Performance}
We now compare our proposed methods on formulated test problems associated with the test problem given in Table \ref{Test_Problems}. Here, we have two SVMs; we use a compatible one of them if required. We abbreviate minimum and maximum as min and max, respectively. The comparison is given in Table \ref{performance_table} and demonstrated by the performance profile graph in Fig. \ref{performance_profiles_graph}. Thereafter,
we illustrate the trajectory of $\{H\left(x^{k}\right)\}$ starting from an arbitrarily chosen point for some of the test problems using the better-performing method in Fig. \ref{Grap}. In addition, we also present the trajectories of $\left\{f^{j}\left(x^{k}\right)~\middle|~j\in [p]\right\}$ corresponding to the trajectory of $\{H\left(x^{k}\right)\}$, which highlights the impact of uncertainty.

\begin{landscape} 

\begin{table}[ht!]
\begin{center}
\resizebox{22cm}{!}{ 
\begin{tabular}{l l l l l l l l l l l l l l l l l l}
\toprule
\multicolumn{1}{c}{Problem} & \multicolumn{1}{c}{SVM} & \multicolumn{1}{c}{$C$} & \multicolumn{1}{c}{$p$} & \multicolumn{2}{c}{Solved}  &\multicolumn{2}{c}{\text{Min Iteration}} & \multicolumn{2}{c}{\text{Mean Iteration}} & \multicolumn{2}{c}{\text{max Iteration}} & \multicolumn{2}{c}{\text{Min time}} & \multicolumn{2}{c}{\text{Mean time}} & \multicolumn{2}{c}{\text{max time}}\\
\cmidrule(rl){5-6}\cmidrule(rl){7-8} \cmidrule(rl){9-10} \cmidrule(rl){11-12} \cmidrule(rl){13-14} \cmidrule(rl){15-16} \cmidrule(rl){17-18}
 &  &  &  & Algo \ref{algo 1} & Algo \ref{algo}& Algo \ref{algo 1} & Algo \ref{algo} & Algo \ref{algo 1} & Algo \ref{algo}& Algo \ref{algo 1} & Algo \ref{algo}& Algo \ref{algo 1} & Algo \ref{algo}& Algo \ref{algo 1} & Algo \ref{algo} & Algo \ref{algo 1} & Algo \ref{algo} \\
\midrule
AP1 & 2 & $C_{1}$ & 50 & 100 & 100 & 0 & 0 & 11.4800 & 15.4400 & 142 & 106 & 0.9222 & 3.4590 & 47.8762 & 70.4389 & 585.8116 & 422.3846 \\

AP3 & 1 & $C_{1}$ & 50 & 100 & 100 & 1 & 5 & 14.7600 & 17.1200 & 85 & 62 & 0.3777 & 1.4290 & 3.1697 & 4.5954 & 17.0800 & 18.5134\\

AP4 & 2 & $C_{1}$ & 50 & 100 & 100 & 0 & 0 & 32.5000 & 39.1000 & 281 & 168 & 1.1000 & 1.1000 & 84.7000 & 87.8000 & 1006.4000 & 385.2000\\

BK1 & 1 & $C_{1}$ & 50 & 100 & 100 & 1 & 1 & 2.3800 & 7.1800 & 7 & 12 & 1.3357 & 1.7345 & 2.6097 & 6.7350 & 6.2383 & 11.9521\\

BK1 & 1 & $C_{2}$ & 50 & 84 & 100 & 0 & 0 & 2.4048 & 1.4800 & 4 & 12 & 0.6473 & 4.4012 & 5.8381 & 14.1328 & 12.3170 & 81.1260\\

DD1 & 1 & $C_{1}$ & 50 & 100 & 100 & 13 & 16 & 58.1800 & 93.8800 & 148 & 220 & 2.4979 & 3.2616 & 11.1108 & 19.9241 & 27.0177 & 49.1342\\

DTLZ1 & 2 & $C_{1}$ & 2 & 74 & 100 & 14 & 1 & 79.8108 & 8.0600 & 463 & 27 & 3.2932 & 0.4538 & 44.3037 & 3.3752 & 332.4419 & 6.6795 \\

FDS & 2 & $C_{1}$ & 5 & 100 & 100 & 18 & 9 & 72.2200 & 22.6200 & 203 & 36 & 5.3832 & 1.3165 & 33.9300 & 4.0184 & 130.6716 & 8.3389\\

GAAZ7 & - & $C_{1}$ & 100 & 100 & 100 & 3 & 6 & 5.0800 & 9.7600 & 9 & 15 & 5.1276 & 9.4493 & 8.2388 & 19.6876 & 15.0707 & 71.1644\\

GEC1 & - & $C_{1}$ & 5 & 100 & 100 & 0 & 0 & 1.6800 & 0.8600 & 15 & 7 & 0.0584 & 0.0797 & 0.3425 & 0.2916 & 2.3575 & 1.1866\\

GEC2 & - & $C_{1}$ & 100 & 100 & 100 & 4 & 1 & 13.2200 & 1.0200 & 14 & 2 & 16.8354 & 5.4231 & 37.7360 & 6.8322 & 56.5505 & 9.9012\\

GEC3 & - & $C_{1}$ & 100 & 98 & 98 & 0 & 0 & 21.5306 & 10.4082 & 247 & 77 & 0.1724 & 0.2004 & 6.4364 & 3.5655 & 90.1907 & 29.5660\\

GRPY2 & - & $C_{1}$ & 100 & 100 & 100 & 0 & 0 & 9.6000 & 6.2800 & 200 & 100 & 1.4949 & 1.1804 & 20.1415 & 10.7672 & 390.2931 & 128.0368\\

Hil1 & 1 & $C_{1}$ & 50 & 100 & 100 & 0 & 0 & 3.7800 & 4.8800 & 65 & 20 & 0.1499 & 0.1732 & 0.8827 & 1.4954 & 11.9483 & 4.9273\\

Hil1 & 1 & $C_{2}$ & 10 & 80 & 100 & 0 & 0 & 6.5250 & 4.6000 & 18 & 24 & 0.0988 & 0.1252 & 1.1814 & 1.4468 & 3.3279 & 6.588\\

IKK1 & 2 & $C_{1}$ & 50 & 100 & 100 & 0 & 0 & 7.1400 & 7.2000 & 84 & 43 & 0.7771 & 1.0304 & 10.4211 & 12.6114 & 94.1988 & 52.9207\\

JOS1 & 1 & $C_{1}$ & 50 & 100 & 100 & 17 & 7 & 21.2800 & 11.0800 & 33 & 50 & 3.8329 & 1.7927 & 6.3339 & 3.2614 & 15.7729 & 10.2713\\

JOS1 & 1 & $C_{1}$ & 10 & 100 & 100 & 25 & 10 & 25.3200 & 11.2800 & 33 & 14 & 10.7817 & 7.6631 & 65.1590 & 27.5927 & 148.7070 & 92.1108 \\

KW2 & 1 & $C_{1}$ & 50 & 100 & 100 & 0 & 0 & 10.7200 & 11.7600 & 72 & 40 & 0.1492 & 0.1710 & 2.2122 & 2.9078 & 13.4446 & 9.7090\\

Lov5 & 1 & $C_{1}$ & 50 & 100 & 100 & 0 & 0 & 18.5200 & 10.3200 & 91 & 51 & 0.1480 & 0.1753 & 3.8904 & 2.4130 & 16.8968 & 10.5893 \\

MOP1 & 1 & $C_{1}$ & 50 & 100 & 100 & 1 & 1 & 1.5600 & 2.0000 & 4 & 5 & 1.0006 & 1.1697 & 1.5800 & 2.3998 & 3.5178 & 6.9476\\

MOP7 & 2 & $C_{1}$ & 50 & 100 & 100 & 34 & 14 & 253.4200 & 124.3400 & 435 & 216 & 63.8696 & 16.9014 & 487.7771 & 211.9162 & 1375.8300 & 611.1069\\

PNR & 1 & $C_{1}$ & 50 & 100 & 100 & 1 & 1 & 4.9000 & 12.5400 & 22 & 36 & 0.3173 & 0.3986 & 1.1102 & 3.8153 & 5.4461 & 17.4612\\

SD & 1 & $C_{1}$ & 50 &  100 & 100 & 4 & 4 & 18.0000 & 8.5600 & 43 & 18 & 9.1147 & 2.8166 & 38.2251 & 5.9329 & 89.7031 & 15.6642\\

Toi4 & 1 & $C_{1}$ & 50 & 100 & 100 & 2 & 2 & 18.5800 & 17.0200 & 139 & 113 & 0.4780 & 0.5822 & 3.9861 & 4.1424 & 26.0522 & 29.6788 \\

TRIDIA & 2 & $C_{1}$ & 50 & 100 & 100 & 0 & 0 & 27.5200 & 20.6200 & 426 & 252 & 0.9112 & 1.1697 & 41.0800 & 40.1414 & 606.2121 & 496.6671\\

VU2 & 1 & $C_{1}$ & 10 & 100 & 100 & 14 & 1 & 23.8000 & 7.8600 & 37 & 17 & 2.2235 & 0.2687 & 4.2168 & 1.7040 & 7.4383 & 4.6867\\

VU2 & 1 & $C_{2}$ & 10 & 100 & 100 & 5 & 1 & 22.3400 & 1.8000 & 26 & 10 & 1.8430 & 0.6982 & 7.1037 & 2.7312 & 13.2969 & 7.5321 \\

\bottomrule 
\end{tabular}}
\caption{Performance metrics of the proposed methods on the test problems for a hundred arbitrarily chosen initial points}
\label{performance_table} 
\end{center}
\end{table}
\end{landscape}

\begin{figure} 
  \centering
  \subfloat[Solver]
  {\includegraphics[width=0.25\textwidth]{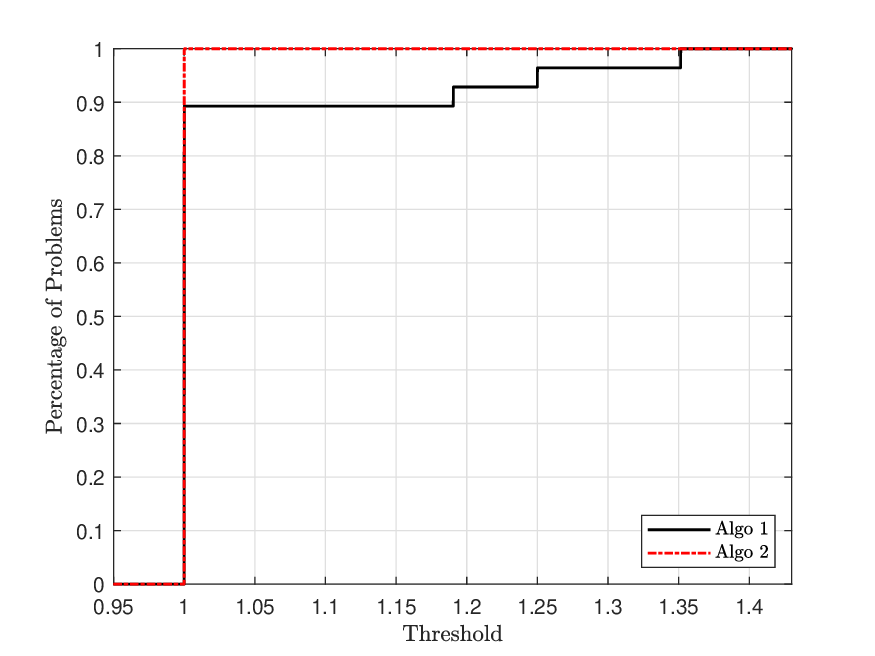}}
  \hfill
  \subfloat[Min iteration count]
 {\includegraphics[width=0.25\textwidth]{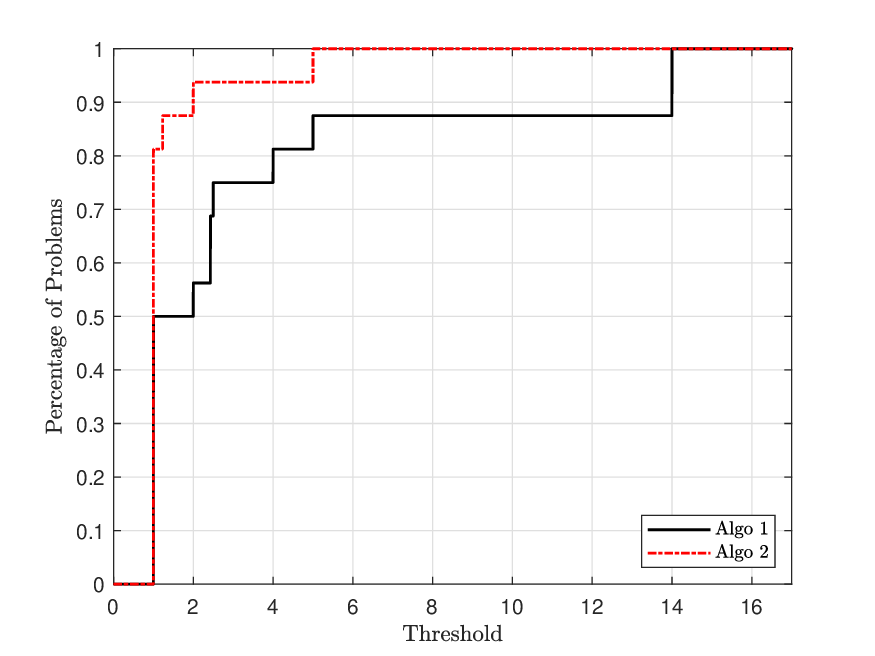}}
\hfill
  \subfloat[Mean iteration count]
{\includegraphics[width=0.25\textwidth]{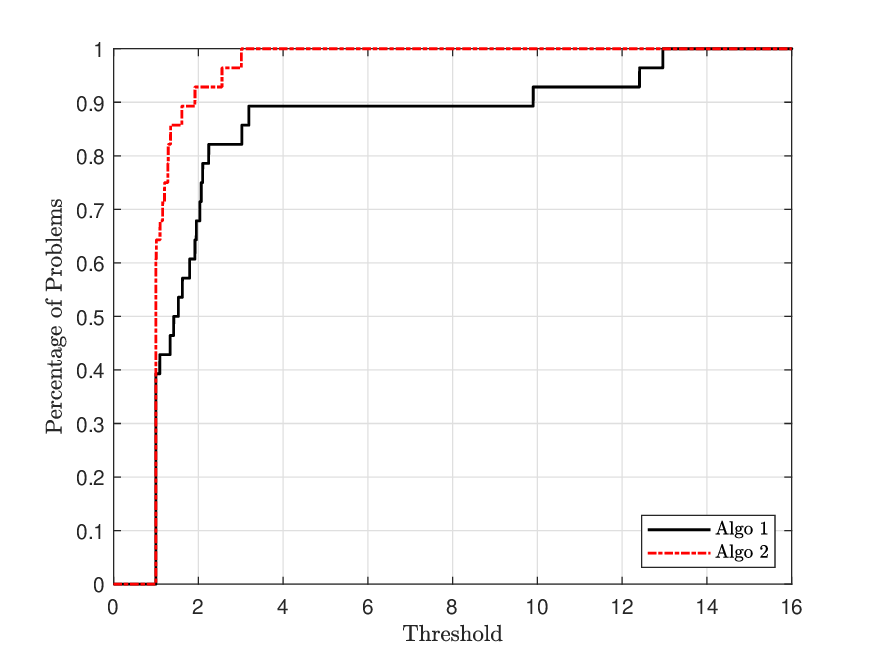}}
\hfill
  \subfloat[Max iteration count]
{\includegraphics[width=0.25\textwidth]{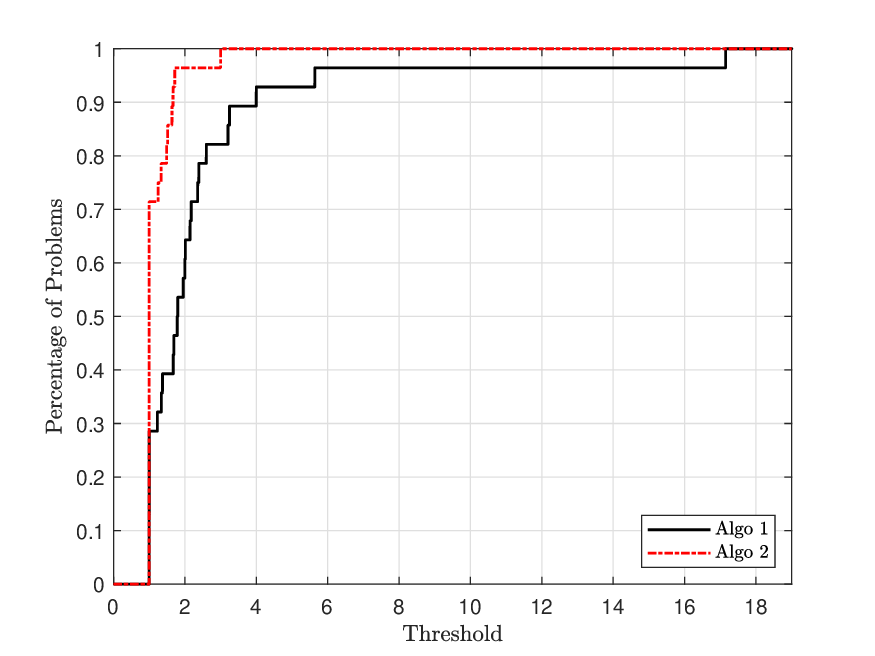}}
\hfill
  \subfloat[Min computational time]
{\includegraphics[width=0.33\textwidth]{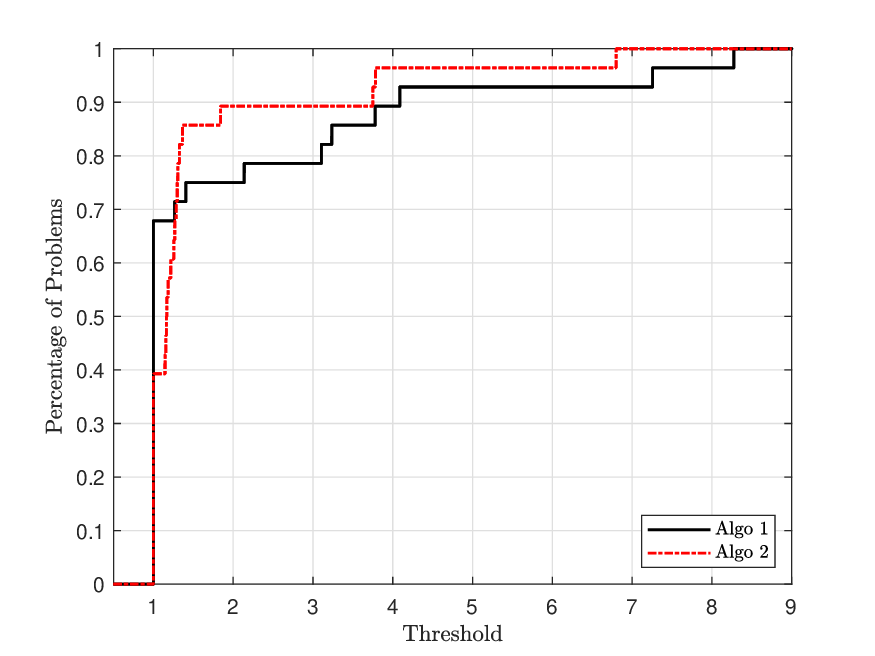}}
\hfill
\subfloat[Mean computational time]
{\includegraphics[width=0.33\textwidth]{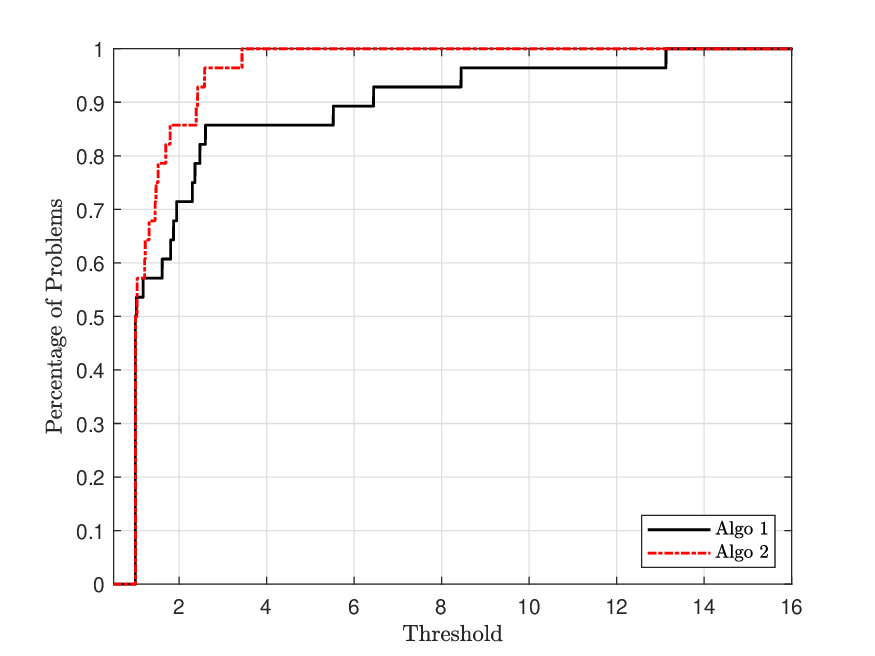}}
\hfill
\subfloat[Max computational time]
{\includegraphics[width=0.33\textwidth]{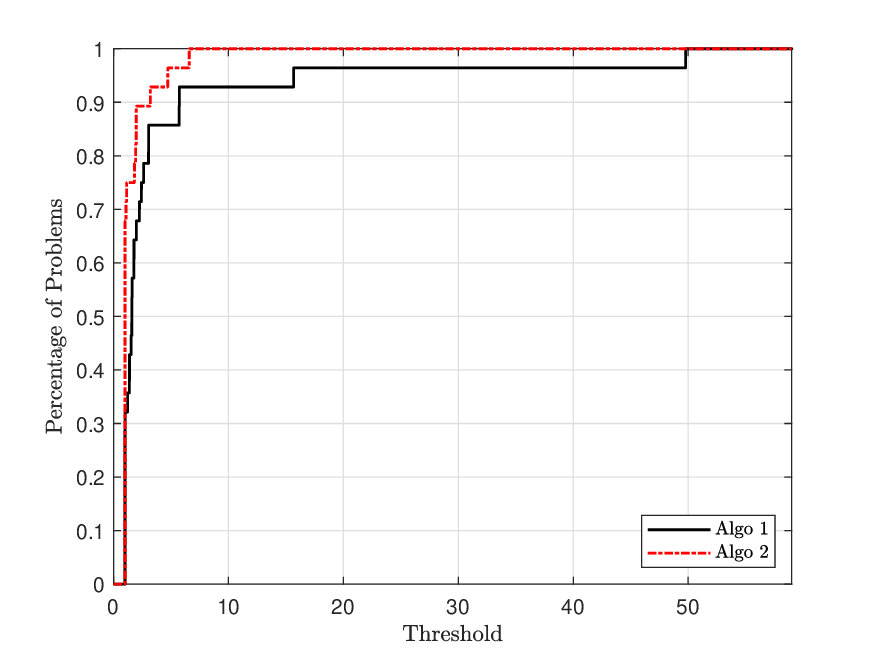}}
\caption{Performance profiles for Algo \ref{algo 1} and Algo \ref{algo} for the test problems given in Table \ref{Test_Problems}}\label{performance_profiles_graph}
\end{figure}

In view of the performance of the proposed methods under the metrics---solve, iteration count, and computational time---over the considered problems, the following observations can be made.
\begin{itemize}
    \item From the performance profile shown in Fig.~\ref{performance_profiles_graph}, it is evident that Algo~\ref{algo} outperforms Algo~\ref{algo 1}.

    \item There are some problems for which the performance of Algo~\ref{algo 1} is better than that of Algo~\ref{algo}, as can be clearly observed in Table~\ref{performance_table}.

    \item We observed during the computations that Algo~\ref{algo} generally required more computational time per iteration than Algo~\ref{algo 1}, particularly in cases where the number of variables involved in solving problem~\eqref{sub_prob_a} was large.

    \item It is noted that the step-size generated by Algo~\ref{algo 1} occasionally decreased below $10^{-15}$. Consequently, a desirable solution could not be obtained.

\end{itemize}

In Fig. \ref{Grap}, the bunches of points on the left and right sides represent $\{H(x^{k})\}$ and $\{f^{j}(x^{k})\}_{j\in [p]}$, respectively.  
The initial set is marked in red, the intermediate sets are shown in blue, and the terminal set is in green.

\begin{figure}[]
  \centering
  \subfloat[GRPY2 with $C_{1}$ by Algo \ref{algo}]{\includegraphics[width=0.5\textwidth]{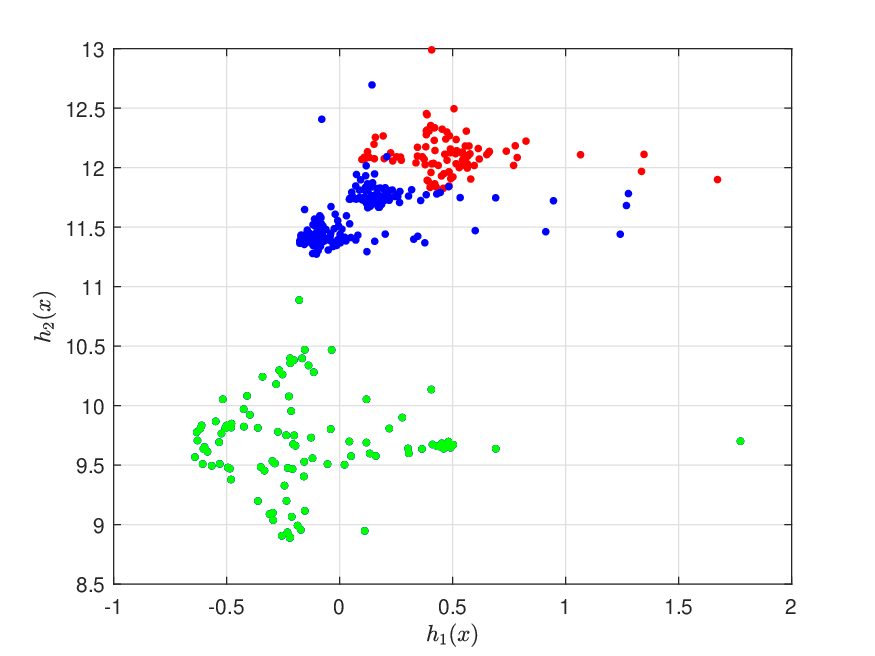}}
  \hfill
  \subfloat[After removing uncertainty from (a)]{\includegraphics[width=0.5\textwidth]{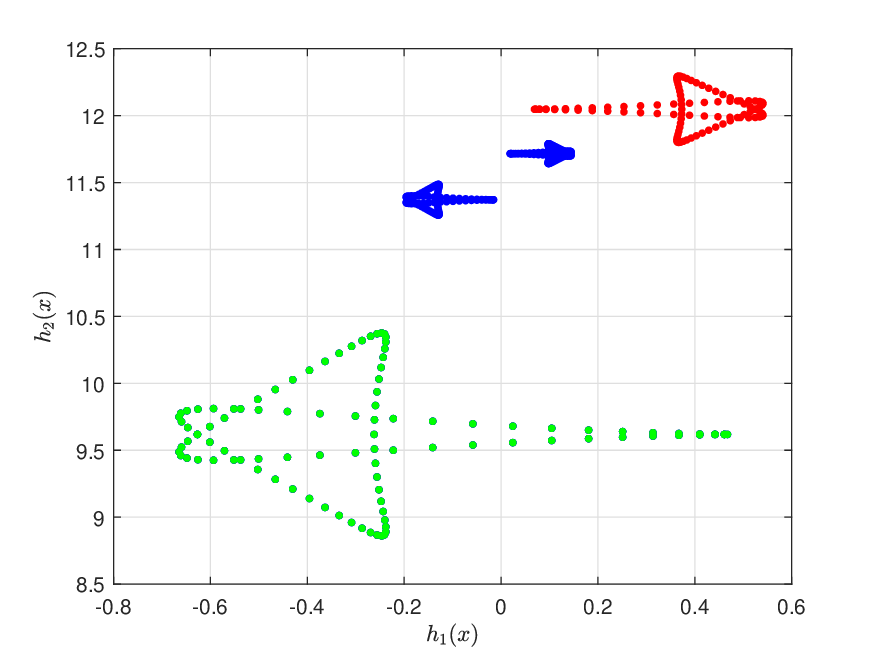}}
    \hfill
  \subfloat[IKK1 with $C_1$ by Algo \ref{algo 1}]{\includegraphics[width=0.5\textwidth]{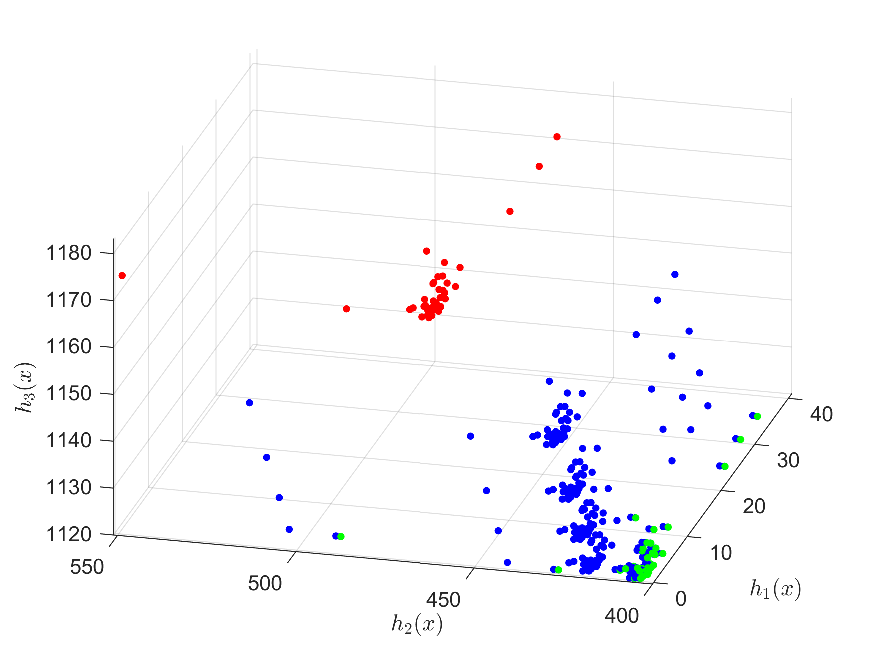}}
  \hfill
  \subfloat[After removing uncertainty from (c)]{\includegraphics[width=0.5\textwidth]{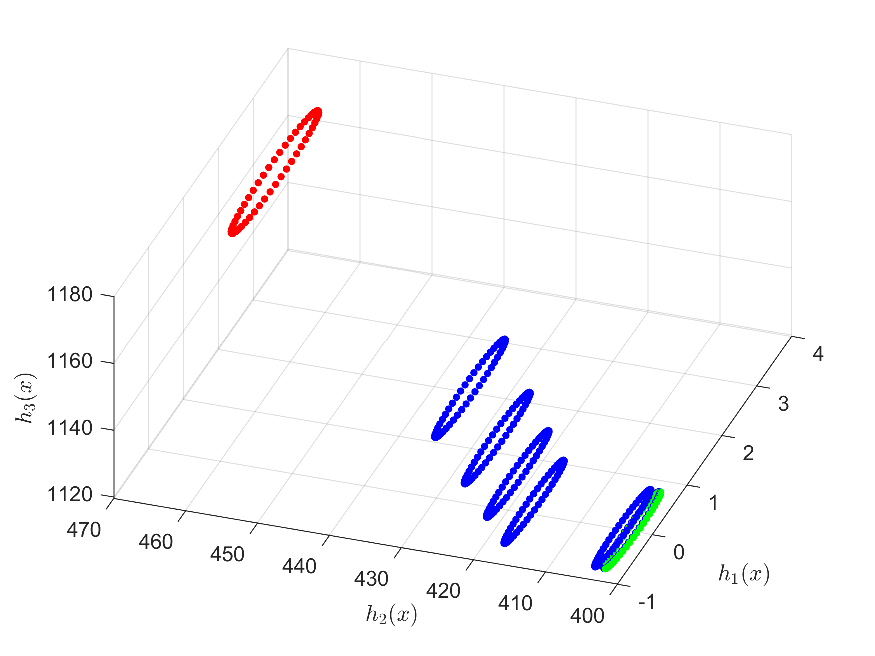}}
    \hfill
  \subfloat[SD with $C_1$ by Algo \ref{algo}]{\includegraphics[width=0.5\textwidth]{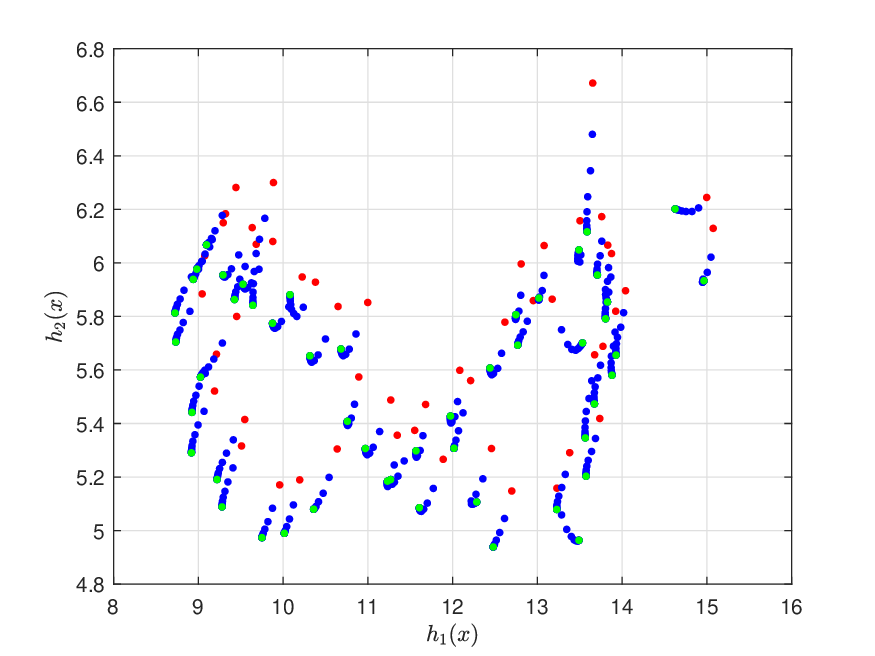}}
    \hfill
  \subfloat[After removing uncertainty from (e)]{\includegraphics[width=0.5\textwidth]{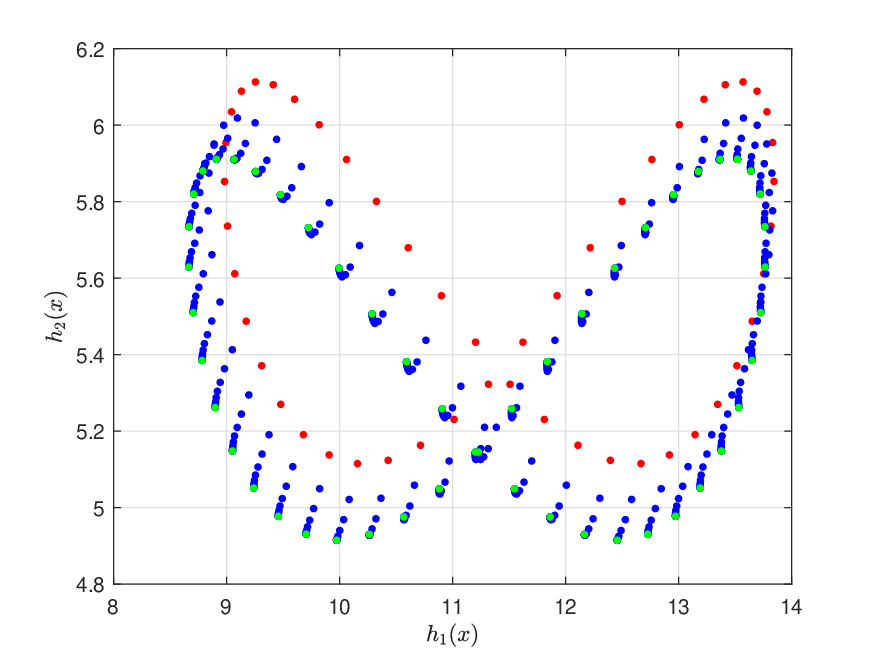}}
  
  \caption{The left side figures show the movement of the objective map of the problems corresponding to the iterative point from an arbitrarily chosen initial point to an optimal point in the image space, and the right side figures show the movement of the corresponding set-valued map without uncertainty ($g^{j}=0,~j\in [p]$).}{\label{Grap}}
\end{figure}

\newpage 
\section{Conclusion and future scope}\label{sec.7}
In this work, we have proposed proximal gradient methods (Algorithms \ref{algo 1} and \ref{algo}), with and without line search, for solving the set-valued optimization problem~\eqref{sop}.  
For the variant without line search, an additional assumption was imposed, that is, the gradient of the continuously differentiable functions, $ f^{j}$ for $j\in [p]$, involved in the objective map $H$ is Lipschitz continuous.

In order to propose the methods, a descent lemma (Lemma \ref{descent lemma}) in the vector case and a stationarity condition \eqref{efficient_stationary} for the vector problem~\eqref{VOP} were established.  
Furthermore, the notions of the minimal set, and the partition set of the objective map at a given point were introduced in \eqref{minimal_set} and \eqref{pat_set}, respectively.  
Based on the notion partition set \eqref{pat_set} of the objective map at a given point, an approach, mentioned in Remark \ref{N_S_c_optimal}, was developed to identify whether a given point is a weakly minimal point of \eqref{sop} by employing a family of vector optimization problems at that point. 
Subsequently, the notion of a stationary point (Definition \ref{D_Stat}) of \eqref{sop} was defined by employing the stationarity condition in the vector case, and an equivalent characterization was provided in Theorem \ref{equ_stationary}.  
Thereafter, the concept of a $C$-descent direction (Definition \ref{C-desent_def}) for a non-stationary point of the problem~\eqref{sop} was examined; in particular, it was shown that a solution $\bar{v}_{l}$ of~\eqref{opt_sol} at a point $\bar{y}$ also served as a descent direction. 
In the context of the proximal gradient method with line search, an Armijo-type line search condition, given in \eqref{Armijo_LS}, was formulated, and the existence of a step-size $\alpha_{k}$ was established for a non-stationary point $x^{k}$ of \eqref{sop} along a descent direction $v^{k}_{l}$ for any $l>0$ in Theorem \ref{LS_Theorem}(i).  
Accordingly, in Theorem \ref{LS_Theorem}(ii), it was demonstrated that the new point $x^{k}+\alpha_{k}v^{k}_{l}$, obtained through this line search condition, satisfied monotonicity in the descent sense.  
Similarly, in the case of without line search, at the $k^{\text{th}}$ iteration, the step-size $\alpha_{k}$ was fixed at one, while the parameter $l$ was adjusted such that $l>\tfrac{L}{2}$.  
It was shown in Proposition \ref{w_line_mono} that, for any iterate $x^{k}$, the new point $x^{k}+v^{k}_{l}$ with $l>\tfrac{L}{2}$ satisfied monotonicity.  
Accordingly, both proposed methods were confirmed as descent methods.

We have discussed the well-definedness of the proposed methods and proved their global convergence by Theorems \ref{Con_algo1} and \ref{con_alg2}. Moreover, Theorem \ref{Con_algo1} guaranteed that the sequence of non-stationary points generated by the steepest descent method, proposed in \cite{steepest2021set_optimization}, converges to a stationary point even in the absence of regularity as discussed in Remark \ref{con_steep_dest}. 
Furthermore, by imposing the assumption that $\nabla f^{j}$, $j\in [p]$, is Lipschitz continuous, we derived a minimum step-size that Algorithm~\ref{algo 1} can generate in Lemma \ref{lemma_step_size}, and based on this, we demonstrated in Theorem \ref{T_com_ana_1} that the computational complexity of Algorithm~\ref{algo 1}.  
With the condition $l \geq L$, we have also analyzed the complexity of Algorithm~\ref{algo} in Theorem \ref{T_com_ana_2}. 
Based on this computational analysis, we obtained the convergence rate $\mathcal{O}(1/\sqrt{k})$ for the proposed methods.  
Finally, a numerical comparison was carried out on several problems involving uncertainty to assess their practical performance. 
The results showed in Fig. \ref{performance_profiles_graph} that Algorithm~\ref{algo} outperformed Algorithm~\ref{algo 1} on the considered problems. 
However, it is worth noting that Algorithm~\ref{algo 1} requires a weaker condition than Algorithm~\ref{algo}. 
Thus, both proposed algorithms possess their own advantages.

In this work, we studied the convergence rate and computational complexity for the general problem \eqref{sop}.  
A natural extension is to carry out these analyses for special cases of \eqref{sop}, such as convex and strongly convex problems, where more precise results can be obtained.  
During the computational experiments, it was observed that in order to preserve the descent property, the step length occasionally dropped below $10^{-15}$.  
As a consequence, Algorithm \ref{algo 1} stagnated and failed to reach the desired precision.  
This observation motivates the development of a proximal gradient method equipped with non-monotone line search conditions.  
Another possible direction is to set up \eqref{sop} with $\ell_{1}$-norm type functions and study applications of this framework in image processing.

\subsubsection*{Funding}
Debdas Ghosh is thankful for the financial support from the Core Research Grant (CRG/2022/001347) from SERB, India.  Ravi Raushan thankfully acknowledges financial support from CSIR, India, through a research fellowship (File No. 09/1217(13822)/2022-EMR-I) to carry out this research work.

\subsubsection*{Data availability}
There is no data associated with this paper.

\subsection*{Declarations}    

\subsubsection*{Funding and/or Conflicts of interests/Competing interests} 
The authors do not have any conflicts of interest to declare. They also do not have any funding conflicts to declare.

\subsubsection*{Ethics approval} 
This article does not involve any human and/or animal studies.

\end{document}